\newcommand{\IR}{\mathbb{R}}
\newcommand{\grad}{\mathrm{grad}}
\newcommand{\IHH}{\mathscr{H}}
\newcommand{\z}{ \Id \mu }
\newcommand{\IN}{\mathbb{N}}
\newcommand{\Id}{{\rm d}}
\newcommand{\f}{\frac}
\newcommand{\nn}{\nonumber}
\newcommand{\Ic}{\mathrm{c}}
\newcommand{\T}{\mathrm{T}}
\newcommand{\IL}{\mathsf{L}}
\newcommand{\C}{\mathsf{C}}
\newtheorem{theorem}{THEOREM}[section]
\newtheorem{Lemma}[theorem]{Lemma}
\newtheorem{Example}[theorem]{Example}
\newtheorem{Corollary}[theorem]{Corollary}
\newtheorem{Remark}[theorem]{Remark}
\newtheorem{Theorem}[theorem]{Theorem}
\newtheorem{Proposition}[theorem]{Proposition}
\newtheorem{Definition}[theorem]{Definition}
\newtheorem*{Acknowledgments}{Acknowledgments}
\newcommand{\question}[1]{\leavevmode{\marginpar{\tiny
$\hbox to 0mm{\hspace*{-0.5mm}$\leftarrow$\hss}
\vcenter{\vrule depth 0.1mm height 0.1mm width \the\marginparwidth}
\hbox to 0mm{\hss$\rightarrow$\hspace*{-0.5mm}}$\\\relax\raggedright #1}}}
\begin{document}

\title[ Calder\'on-Zygmund inequality on Riemannian manifolds]{The Calder\'on-Zygmund inequality and Sobolev spaces on noncompact Riemannian manifolds}

\author[B. G\"uneysu]{Batu G\"uneysu}
\address{Batu G\"uneysu, Institut f\"ur Mathematik, Humboldt-Universit\"at zu Berlin, 12489 Berlin, Germany} \email{gueneysu@math.hu-berlin.de}

\author[S. Pigola]{Stefano Pigola}
\address{Stefano Pigola, Dipartimento di Scienza e Alta Tecnologia - Sezione di Matematica, Università dell'Insubria, 22100 Como, Italy} \email{stefano.pigola@uninsubria.it}

\subjclass[2010]{53C20, 46E35, 58J50}

\maketitle 

\begin{abstract} We introduce the concept of Calder\'on-Zygmund inequalities on Riemannian manifolds. For $1<p<\infty$, these are inequalities of the form
$$
\left\Vert \mathrm{Hess}\left(  u\right)  \right\Vert _{\IL^p}\leq
C_{1}\left\Vert u\right\Vert _{\IL^p}+C_{2}\left\Vert \Delta u\right\Vert
_{\IL^p}, 
$$
valid a priori for all smooth functions $u$ with compact support, and constants $C_1\geq 0$, $C_2>0$. Such an inequality can hold or fail, depending on the underlying Riemannian geometry. After establishing some generally valid facts and consequences of the Calder\'on-Zygmund inequality (like new denseness results for second order $\mathsf{L}^p$-Sobolev spaces and gradient estimates), we establish sufficient geometric criteria for the validity of these inequalities on possibly noncompact Riemannian manifolds. These results in particular apply to many noncompact hypersurfaces of constant mean curvature.   
\end{abstract}

\section{Introduction}

Let $M$ be a smooth possibly noncompact Riemannian manifold. For an \emph{arbitrary} $p\in (1,\infty)$, let us consider the following canonically given problems for second order Sobolev spaces on $M$, on the $\mathsf{L}^{p}$-scale:
\begin{itemize}  
	\item \emph{Problem 1: Under which (geometric) assumptions on $M$ does one have the denseness $\mathsf{H}^{2,p}_0(M)=\mathsf{H}^{2,p}(M)$? 
\item Problem 2: Under which assumptions on $M$ does one have the implication
  $$
 f\in \mathsf{L}^{p}(M)\cap \mathsf{C}^{2}(M),\Delta f \in \mathsf{L}^{p}(M) \Rightarrow f\in \mathsf{H}^{2,p}(M)
  $$
(that is, $|\mathrm{Hess}(f)|\in \mathsf{L}^{p}(M)$)?
 \item Problem 3: Under which assumptions on $M$ does one have an inequality of the form}
  \begin{align}\label{intrograd}
\left\|  \grad(f) \right\|_{\mathsf{L}^{p}} \leq C (\left\|  \Delta f \right\|_{\mathsf{L}^{p}}+\left\|  f \right\|_{\mathsf{L}^{p}})\>\text { for all $ f\in \mathsf{C}^{\infty}_{\mathrm{c}}(M)$}?
  \end{align}
\end{itemize}

Let us note here that Problem 1, the denseness of $\mathsf{C}^{\infty}_{\mathrm{c}}(M)$ in $\mathsf{H}^{2,p}(M)$, is a classical problem which has been treated systematically in \cite{hebey}. Here, we would like to stress the fact that without a lower control on the injectivity results, nothing seems to be known so far for the case $p\ne 2$. Furthermore, Problem 2 is obviously concerned with $\mathsf{L}^{p}$-estimates for solutions of the Poisson equation on $M$, and Problem 3 arises naturally in Rellich-Kondrachov type compactness arguments: Here, typically one has given a sequence of functions $\{f_n\}\subset \mathsf{C}^{\infty}_{\mathrm{c}}(M)$ such that 
$$
\sup_{n\in\IN}\max (\left\|f_n\right\|, \left\|\Delta f_n\right\|) <\infty,
$$
and one would like to know whether the sequence $\{f_n\}$ is bounded in $\mathsf{H}^{1,p}(M)$.\\
It turns that there is an inequality underlying all three problems simultaniously, namely, the \emph{Calder\'on-Zygmund inequality}. This inequality, which may generally fail on noncompact $M's$, states that there are constants $C_1\geq 0$, $C_2>0$ such that
$$
(\mathrm{CZ}(p))\>\>\>    \>\>\>\>\>\>\left\Vert \mathrm{Hess}\left(  u\right)  \right\Vert _{\IL^p}\leq
C_{1}\left\Vert u\right\Vert _{\IL^p}+C_{2}\left\Vert \Delta u\right\Vert
_{\IL^p}\>\>\text{ for all $u\in  \mathsf{C}^{\infty}_{\mathrm{c}}(M)$.} 
$$
Let us remark that in the Euclidean $\IR^m$, this inequality can be proved \cite{GT} by estimates on singular integral operators which have been proved by Calder\'on and Zygmund \cite{cald}. Ultimately, this was the motivation for us for calling\footnote{The authors would like to thank Klaus Ecker in this context} $\mathrm{CZ}(p)$ the \lq\lq{}Calder\'on-Zygmund inequality\rq\rq{}.\\ 
Now one has the following implications which link $\mathrm{CZ}(p)$ to the above Problems 1-3:

\begin{itemize}  
	\item[(A)] It has been observed in \cite{Gu}  \emph{that under $\mathrm{CZ}(p)$, if $M$ is geodesically complete and admits a sequence of Laplacian cut-off functions (this is the case e.g. if $M$ has a nonnegative Ricci curvature), then one has $\mathsf{H}^{2,p}_0(M)=\mathsf{H}^{2,p}(M)$}.\vspace{2mm}

\item[(B)] In Corollary \ref{saa3} \emph{we show that under $\mathrm{CZ}(p)$, if $M$ is geodesically complete and admits a sequence of Hessian cut-off functions (this is the case e.g. if $M$ has a bounded curvature tensor), then any $f\in \mathsf{H}^{1,p}(M)\cap \mathsf{C}^{2}(M)$ with $\Delta f \in \mathsf{L}^{p}(M)$ satisfies $f\in \mathsf{H}^{2,p}(M)$.}\vspace{2mm}

\item[(C)] In Corollary \ref{cor_gradestimate} \emph{we prove that, on a geodesically complete manifold, $\mathrm{CZ}(p)$ always implies (\ref{intrograd})}.
\end{itemize}

Here, the results (A) and (B) follow from the existence of appropriate second order cut-off functions (see also \cite{Gu}), which we prove to exist under very weak assumptions on the curvature and without positive injecitivity radius. In this context, we also establish our first main result (cf. Corollary \ref{dssss} below):

\begin{Theorem} Let $M$ be geodesically complete with a bounded curvature tensor. Then one has $\mathsf{H}^{2,p}_0(M)= \mathsf{H}^{2,p}(M)$ for all $1<p<\infty$.
\end{Theorem}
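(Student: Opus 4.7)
The strategy is to invoke the implication (A) of the introduction, which reduces the statement to verifying two separate ingredients under the bounded curvature assumption: the Calder\'on-Zygmund inequality $\mathrm{CZ}(p)$, and the existence of a sequence of Laplacian cut-off functions.

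First I would establish $\mathrm{CZ}(p)$ for all $1<p<\infty$. This is the sole genuinely analytic/geometric step and is presumably one of the main theorems of the paper, to be proved in a subsequent section. The natural route is via localization in harmonic coordinate charts of a uniform radius: geodesic completeness together with a two-sided bound on the curvature tensor yields, via standard comparison geometry and Jost--Karcher style estimates, a positive harmonic radius $r_{0}>0$ and uniform two-sided bounds on the metric coefficients in such charts. For $u\in \mathsf{C}^{\infty}_{\mathrm{c}}(M)$, pulling back to a Euclidean ball of radius $r_{0}$, applying the classical Euclidean Calder\'on--Zygmund estimate of \cite{GT,cald} to the constant coefficient Laplacian, and then converting the Euclidean Laplacian into the Riemannian Laplacian modulo first order terms (whose $\IL^p$ norm is controlled by interpolation between $\|u\|_{\IL^p}$ and $\|\Delta u\|_{\IL^p}$), one obtains a local Hessian estimate. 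Summing over a uniformly locally finite harmonic covering of $M$ yields the global $\mathrm{CZ}(p)$ inequality.

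Second, I would observe that the bounded curvature hypothesis also produces a sequence of Hessian cut-off functions $\{\chi_{n}\}\subset \mathsf{C}^{\infty}_{\mathrm{c}}(M)$ exhausting $M$, with $0\leq \chi_{n}\leq 1$, $\chi_{n}\equiv 1$ on the geodesic ball of radius $n$ about a fixed origin, and $\sup_{n}\bigl(\|\nabla\chi_{n}\|_{\infty}+\|\mathrm{Hess}(\chi_{n})\|_{\infty}\bigr)<\infty$; this is exactly the existence result invoked in (B). Since $\Delta\chi_{n}=\mathrm{tr}\,\mathrm{Hess}(\chi_{n})$, this $\{\chi_{n}\}$ is automatically a sequence of Laplacian cut-off functions in the sense relevant to (A).

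With $\mathrm{CZ}(p)$ and a Laplacian cut-off sequence both available, the denseness $\mathsf{H}^{2,p}_{0}(M)=\mathsf{H}^{2,p}(M)$ is precisely the content of implication (A): for $u\in \mathsf{H}^{2,p}(M)$, the truncations $u_{n}\deff \chi_{n}u$ have compact support, $\mathrm{CZ}(p)$ controls $\|\mathrm{Hess}(u-u_{n})\|_{\IL^p}$ by $\|u-u_{n}\|_{\IL^p}$ and $\|\Delta(u-u_{n})\|_{\IL^p}$, both of which tend to zero by dominated convergence and the uniform bounds on $\chi_{n}$, $\nabla\chi_{n}$, $\mathrm{Hess}(\chi_{n})$, and a final Friedrichs mollification on the fixed compact support produces approximants in $\mathsf{C}^{\infty}_{\mathrm{c}}(M)$. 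As indicated, the first step is the only really new piece and is where all of the curvature geometry is consumed; the rest is essentially a formal consequence of (A) and (B).
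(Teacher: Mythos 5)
There is a genuine gap in your first step, and it is precisely the step where all the difficulty would sit. You claim that geodesic completeness plus a bound on the curvature tensor yields a positive harmonic radius $r_0>0$ via Jost--Karcher type estimates. This is false: every known lower bound on the harmonic radius (including Theorem \ref{univ} in the appendix of this paper) requires, in addition to curvature bounds, a \emph{lower bound on the injectivity radius}, and bounded curvature does not provide one (think of a hyperbolic surface with a cusp: complete, constant curvature $-1$, injectivity radius tending to $0$). This is exactly why the paper's Theorem \ref{centrale1} assumes $r_{\mathrm{inj}}(M)>0$ in addition to a Ricci bound, and why the introduction stresses that the denseness result is new precisely because it does \emph{not} require a positive injectivity radius. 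In fact, whether $\mathrm{CZ}(p)$ holds for general $p$ under only completeness and bounded curvature is not established anywhere in the paper (Theorem \ref{centrale2} covers only $1<p\le 2$ and needs an additional volume growth hypothesis), so your route through implication (A) cannot be completed as described.

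The fix is that $\mathrm{CZ}(p)$ is not needed at all, and you already have the right ingredient in hand in your second paragraph. Bounded curvature plus completeness gives a sequence of \emph{Hessian} cut-off functions $(\chi_n)$, with $\sup_M|\mathrm{Hess}(\chi_n)|\to 0$ and $\sup_M|\mathrm{d}\chi_n|\to 0$ (this is Proposition \ref{laplacian} b), via Tam's exhaustion function with bounded gradient and Hessian). Given these, the denseness follows directly from the Leibniz rule
\begin{align*}
\mathrm{Hess}(\chi_n f) = f\,\mathrm{Hess}(\chi_n) + \mathrm{d}\chi_n\otimes \mathrm{d}f + \mathrm{d}f\otimes \mathrm{d}\chi_n + \chi_n\,\mathrm{Hess}(f),
\end{align*}
which shows $\chi_n f\to f$ in $\mathsf{H}^{2,p}(M)$ for smooth $f\in\mathsf{H}^{2,p}(M)$ (reduce to smooth $f$ by the Meyers--Serrin type theorem first); no control of $\|\mathrm{Hess}(u)\|_p$ by $\|\Delta u\|_p$ is ever invoked. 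This is Proposition \ref{saa} b) of the paper. Your proposal instead downgrades the Hessian cut-offs to Laplacian cut-offs and routes through implication (A), which forces you to prove the much stronger (and here unavailable) inequality $\mathrm{CZ}(p)$.
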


This result is entirely new for $p\ne 2$, for it does not require a positive injectivity radius (cf. \cite{hebey}). \\
The statement (C) makes use of an appropriate $\mathsf{L}^p$-interpolation result, which should be of an independent interest (cf. Proposition \ref{lemma_interpolation}). These observations clearly motivate a systematic treatement of the following problem:\vspace{2mm}

\emph{Under which (geometric) assumptions on $M$ does one have $\mathrm{CZ}(p)$, and how do the $\mathrm{CZ}(p)$-constants $C_1$, $C_2$ depend on the underlying geometry?}\vspace{2mm}

Let us start by taking a look at the \emph{local situation}: We first prove \emph{in Theorem \ref{relco} that one always has $\mathrm{CZ}(p)$ on relatively compact domains $\Omega \subset M$, where one can even pick $C_1=0$, if $\Omega$ has a smooth boundary}. In particular, using a gluing procedure which again relies on $\mathsf{L}^p$-interpolation, the latter results show \emph{that $\mathrm{CZ}(p)$ is stable under compact perturbations (cf. Theorem \ref{relco})}, which in particular applies to manifolds with ends. However, as one might expect, in both of these cases the $\mathrm{CZ}(p)$-constants depend rather implicitely on the underlying geometry, which raises the question of more precise estimates on geodesic balls: This problem is attacked \emph{in Theorem \ref{local}, where we prove that $\mathrm{CZ}(p)$ holds on sufficiently small geodesic balls, with constants only depending on the radius, $\dim M$, $p$, and a lower bound of an appropriate harmonic radius. }\\
As for \emph{global results}, it turns out that for $p=2$ it is possible to give a rather complete answer: Namely, it is \emph{shown in Proposition \ref{Ricbelow} that a lower bound $\mathrm{Ric}\geq -C$ on the Ricci curvature implies a stronger infinitesimal variant of $\mathrm{CZ}(2)$ with constants depending explicitely on $C$.} This is in fact a straightforward consequence of Bochner\rq{}s indentity. On the other hand, \emph{we prove that this result is optimal, in the sense that there exists a geodesically complete noncompact surface $N$ with unbounded Gauss curvature, such that $\mathrm{CZ}(2)$ fails on $N$ (cf. Theorem \ref{counter}).} \\
For $p\ne 2$, we prove the following two results, which can also be considered as the main results of this paper:

\begin{Theorem}\label{centralei} Let $1<p<\infty$ and assume that $M$ has bounded Ricci curvature and a positive injectivity radius\footnote{thus $M$ is automatically geodesically complete}. Then one has $\mathrm{CZ}(p)$, with constants depending only on $\dim M$, $p$, $\left\|\mathrm{Ric}\right\|_{\infty}$ and the injectivity radius.
\end{Theorem}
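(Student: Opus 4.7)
The plan is to reduce the global CZ$(p)$ inequality to its local version on small geodesic balls, supplied by Theorem \ref{local}, and then to glue the local estimates via a partition of unity of bounded overlap. The crucial starting point is that under the assumptions $\|\mathrm{Ric}\|_{\infty} < \infty$ and $\mathrm{inj}(M) > 0$, a classical quantitative construction of harmonic coordinates (Jost--Karcher, Anderson) provides a uniform positive lower bound on the harmonic radius of $M$, depending only on $\dim M$, $\|\mathrm{Ric}\|_{\infty}$ and the injectivity radius. Feeding this bound into Theorem \ref{local} yields a radius $r_0 > 0$ and a constant $C > 0$, depending only on the admissible data, such that CZ$(p)$ with these constants holds for all $v \in \mathsf{C}^{\infty}_{\mathrm{c}}(B_{r_0}(x))$ and every $x \in M$.

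Next, using volume doubling on scales $\leq r_0$ (a consequence of the two-sided Ricci bound), I would extract by a Vitali argument a cover $\{B_{r_0}(x_i)\}_{i\in I}$ of $M$ such that $\{B_{r_0/2}(x_i)\}$ still covers $M$ and the multiplicity is bounded by some integer $N$, with $N$ again depending only on the admissible data. Working in the harmonic charts around each $x_i$, in which the metric coefficients are uniformly $C^{1,\alpha}$-controlled, one can build a subordinate partition of unity $\{\chi_i\}$ with a uniform bound
\[
\sup_{i}\bigl(\|\chi_i\|_{\infty} + \|\nabla\chi_i\|_{\infty} + \|\mathrm{Hess}(\chi_i)\|_{\infty}\bigr) \leq K.
\]

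Given $u \in \mathsf{C}^{\infty}_{\mathrm{c}}(M)$, I would apply the local CZ$(p)$ inequality to each $\chi_i u$, expand
\[
\Delta(\chi_i u) = \chi_i\,\Delta u + 2\langle\nabla\chi_i, \nabla u\rangle + u\,\Delta\chi_i,
\]
and sum the $p$-th powers over $i$, using the multiplicity bound $N$ to control the overlap. This yields
\[
\|\mathrm{Hess}(u)\|_{\mathsf{L}^{p}} \leq C'\bigl(\|u\|_{\mathsf{L}^{p}} + \|\Delta u\|_{\mathsf{L}^{p}} + \|\nabla u\|_{\mathsf{L}^{p}}\bigr),
\]
with $C'$ depending only on the admissible data. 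The main obstacle is to eliminate the gradient term on the right. Here I would invoke the $\mathsf{L}^{p}$-interpolation result Proposition \ref{lemma_interpolation}, which under the present assumptions delivers an estimate of Ehrling type
\[
\|\nabla u\|_{\mathsf{L}^{p}} \leq \varepsilon\,\|\mathrm{Hess}(u)\|_{\mathsf{L}^{p}} + C_{\varepsilon}\,\|u\|_{\mathsf{L}^{p}}
\]
for every $\varepsilon > 0$. Choosing $\varepsilon$ small enough that $C'\varepsilon \leq 1/2$, the Hessian term is absorbed into the left-hand side and one obtains $\mathrm{CZ}(p)$ with constants depending only on $\dim M$, $p$, $\|\mathrm{Ric}\|_{\infty}$ and the injectivity radius.
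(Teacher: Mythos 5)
Your argument is correct and follows essentially the same route as the paper: a uniform lower bound on the harmonic radius (Theorem \ref{univ}), the local estimate of Theorem \ref{local} on a cover of uniformly bounded multiplicity (Lemma \ref{eses}), and finally Proposition \ref{lemma_interpolation} to absorb the gradient term. The one place where you do more work than necessary is the partition of unity: the paper never cuts $u$ off, because the primary estimate (\ref{loci}) of Theorem \ref{local} is an \emph{interior} estimate valid for arbitrary $u\in\C^{\infty}_{\Ic}(M)$, bounding $\Vert 1_{\mathrm{B}_{r/2}(x_i)}\mathrm{Hess}(u)\Vert_p$ by data on $\mathrm{B}_{2r}(x_i)$; summing these directly over the cover gives the global inequality with the gradient term, with no need to construct cut-offs with uniformly bounded Hessians or to expand $\mathrm{Hess}(\chi_i u)$ and $\Delta(\chi_i u)$ (note that your write-up only expands the latter, though the former also produces commutator terms). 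Two minor points to make precise: for $1<p<2$ you must use part b) of Proposition \ref{lemma_interpolation} (part a) is stated only for $p\geq 2$), which is available here since a positive injectivity radius forces geodesic completeness and in fact yields the gradient bound directly in terms of $\Vert u\Vert_p$ and $\Vert\Delta u\Vert_p$; and the bounded-multiplicity cover is supplied by Lemma \ref{eses} under the Ricci lower bound, so no separate Vitali argument is needed.
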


\begin{Theorem}\label{centraleii} Let $1<p\leq 2$. Assume that $M$ is geodesically complete with a first order bounded geometry, and that there are constants $D\geq 1$, $0\leq\delta<2$ with 
\begin{align}
\mathrm{vol}(\mathrm{B}_{tr}(x))\leq D t^{D}\mathrm{e}^{t^{\delta
}+r^{\delta}}\mathrm{vol}(\mathrm{B}_r(x))\>\text{ for all $x\in M$, $r>0$, $t\geq1$.}\label{B111}
\end{align}
Then one has $\mathrm{CZ}(p)$, with constants depending only on $\dim M$, $p$, $\left\|\mathrm{R}\right\|_{\infty}$, $\left\|\nabla \mathrm{R}\right\|_{\infty}$, $D$,$\delta$, with $\mathrm{R}$ the curvature tensor.
 \end{Theorem}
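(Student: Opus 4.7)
\bigskip

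The plan is to globalize the local Calder\'on--Zygmund estimate of Theorem \ref{local} via a partition of unity, and then to absorb the residual first order term by means of the $\mathsf{L}^p$ interpolation result (Proposition \ref{lemma_interpolation}).

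First, one needs a uniform localization. First order bounded geometry, together with geodesic completeness, yields via the classical theory of harmonic coordinates a uniform positive lower bound $r_0>0$ on the harmonic radius of $M$, depending only on $\dim M$, $\left\|\mathrm{R}\right\|_\infty$ and $\left\|\nabla \mathrm{R}\right\|_\infty$. Picking a maximal $r_0/4$--separated net $\{x_i\}\subset M$, the balls $\{\mathrm{B}_{r_0/2}(x_i)\}$ then cover $M$ with a multiplicity uniformly bounded by a constant $N$; the bound on $N$ is a purely metric consequence of the lower Ricci bound via volume comparison. Using the uniform harmonic radius and the bound $\left\|\mathrm{R}\right\|_\infty<\infty$ one constructs a subordinate smooth partition of unity $\{\chi_i\}$, $\sum_i\chi_i=1$, with uniform upper bounds on $|\nabla \chi_i|$ and $|\mathrm{Hess}(\chi_i)|$.

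Given $u\in \mathsf{C}^\infty_{\mathrm{c}}(M)$, one applies the local Calder\'on--Zygmund inequality (Theorem \ref{local}) to each $\chi_i u\in \mathsf{C}^\infty_{\mathrm{c}}(\mathrm{B}_{r_0/2}(x_i))$, with constants uniform in $i$. Expanding
$$\mathrm{Hess}(\chi_i u)=\chi_i \mathrm{Hess}(u)+u\,\mathrm{Hess}(\chi_i)+\nabla\chi_i\otimes\nabla u+\nabla u\otimes\nabla\chi_i,$$
$$\Delta(\chi_i u)=\chi_i\Delta u+2\langle\nabla\chi_i,\nabla u\rangle+u\,\Delta\chi_i,$$
then summing over $i$ (and using $|\sum_i a_i|^p\le N^{p-1}\sum_i |a_i|^p$ and bounded multiplicity), one arrives at
$$\left\|\mathrm{Hess}(u)\right\|_{\mathsf{L}^p}\le C\bigl(\left\|u\right\|_{\mathsf{L}^p}+\left\|\nabla u\right\|_{\mathsf{L}^p}+\left\|\Delta u\right\|_{\mathsf{L}^p}\bigr),$$
with $C$ depending only on the quantities listed in the statement.

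To eliminate $\left\|\nabla u\right\|_{\mathsf{L}^p}$ from the right-hand side, one appeals to Proposition \ref{lemma_interpolation}, which under the volume growth hypothesis \eqref{B111} gives, for every $\varepsilon>0$, an estimate of the form
$$\left\|\nabla u\right\|_{\mathsf{L}^p}\le \varepsilon \left\|\mathrm{Hess}(u)\right\|_{\mathsf{L}^p}+C_\varepsilon \left\|u\right\|_{\mathsf{L}^p}.$$
Choosing $\varepsilon$ small enough to be absorbed into the left-hand side of the previous display yields $\mathrm{CZ}(p)$. The main obstacle in this approach is precisely the interpolation step: this is where the restriction $p\le 2$ genuinely enters, because the semigroup / Riesz transform machinery available on general noncompact manifolds (in the spirit of Bakry and Coulhon--Duong) is unconditionally effective only in the range $p\le 2$, and the quantitative volume growth hypothesis \eqref{B111} with $\delta<2$ is tailored to ensure both stochastic completeness and a Gaussian upper heat kernel bound required to implement these techniques. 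Once this interpolation is available, the partition of unity step above is essentially routine.
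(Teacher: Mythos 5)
Your argument has a genuine gap at its very first step. You assert that first order bounded geometry (i.e.\ $\left\|\mathrm{R}\right\|_{\infty}<\infty$ and $\left\|\nabla\mathrm{R}\right\|_{\infty}<\infty$) together with completeness yields a uniform positive lower bound on the harmonic radius. This is false: the harmonic radius control of Theorem \ref{univ} requires, in addition to the curvature bounds, a lower bound on the injectivity radius, and no such bound is available here. Curvature bounds alone do not prevent collapse (think of arbitrarily thin flat cylinders, or of a complete surface whose injectivity radius tends to zero at infinity while the curvature and its derivatives stay bounded); at a point $x$ where $\mathrm{vol}(\mathrm{B}_r(x))$ is much smaller than $r^m$, condition (\ref{hari1}) forces $r_{Q,k,\alpha}(x)$ to be small, so the harmonic radius genuinely degenerates. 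The volume hypothesis (\ref{B111}) does not rescue this, since it only controls ratios of ball volumes and gives no absolute non-collapsing bound $\inf_x\mu(\mathrm{B}_1(x))>0$ that would let you invoke a Cheeger--Gromov--Taylor type injectivity radius estimate. In fact, your covering-plus-interpolation scheme is essentially the paper's proof of Theorem \ref{centrale1}, whose extra hypothesis $r_{\mathrm{inj}}(M)>0$ is exactly what makes that scheme work; the whole point of Theorem \ref{centraleii} is to dispense with the injectivity radius, and that cannot be done by this route. A further symptom that something is off: your argument never really uses (\ref{B111}) nor the restriction $p\leq 2$ (the interpolation you need, Proposition \ref{lemma_interpolation}~a), is the $p\geq 2$ half and holds on any manifold), so if it were correct it would prove a statement strictly stronger than Theorem \ref{centralei}.

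The paper's actual proof is of a completely different nature. One reduces $\mathrm{CZ}(p)$ to the $\mathsf{L}^p$-boundedness of $\nabla\,\mathrm{d}(\Delta_0+\sigma)^{-1}$, factored as $\nabla(\Delta_1+\sigma)^{-1/2}\circ\mathrm{d}(\Delta_0+\sigma)^{-1/2}$ with $\sigma$ determined by the lower Ricci bound. The scalar Riesz transform $\mathrm{d}(\Delta_0+\sigma)^{-1/2}$ is bounded on $\mathsf{L}^p$ by Bakry's theorem, while the covariant Riesz transform $\nabla(\Delta_1+\sigma)^{-1/2}$ on $1$-forms is handled by the Thalmaier--Wang derivative estimates (its $\mathsf{L}^2$-boundedness coming from the Weitzenb\"ock formula $\Delta_1=\nabla^{\dagger}\nabla+\mathrm{Ric}(\sharp,\sharp)$ and Lemma \ref{edh}). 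It is in verifying the hypotheses of the Thalmaier--Wang theorem that the bounds on $\mathrm{R}$ and $\nabla\mathrm{R}$, the Li--Yau estimates, and the volume condition (\ref{B111}) enter, and it is that theorem --- not the interpolation --- which restricts the range to $1<p\leq 2$.
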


The proof of Theorem \ref{centralei} uses appropriate elliptic estimates, in combination with harmonic-radius bounds on the Riemannian structure, a method which requires bounds on the injectivity radius, but has the advantage of working for arbitrary $p$. The proof of Theorem \ref{centraleii}, however, is very different: It uses deep boundedness-results on \emph{covariant Riesz-transforms} by Thalmaier-Wang \cite{thalmaier}, which ultimately follow from covariant probabilistic heat-semigroup derivative formulas. This technique makes it possible to avoid assumptions on the injectivity radius. Under geodesic completeness, the generalized doubling assumption (\ref{B111}) is implied by $\mathrm{Ric}\geq 0$ (though $\mathrm{Ric}\geq 0$ is not necessary at for all (\ref{B111}); cf. Example \ref{negric}).

This paper is organized as follows: In Section \ref{anfang} we establish some Riemann geometric notation. In Section \ref{cons} we first establish the above mentioned consequences (A), (B), (C) of the Calder\'on-Zygmund inequality, and then we prove the various local Calder\'on-Zygmund inequalities (Theorem \ref{relco} and Theorem \ref{local}). In Section \ref{gross} we prove several geometric criteria for the validity $\mathrm{CZ}(p)$ on noncompact $M\rq{}s$, thus Proposition \ref{Ricbelow} for $p=2$, and the above Theorem \ref{centralei} and Theorem \ref{centraleii}, and Section \ref{model} is devoted to the construction of the surface which does not support $\mathrm{CZ}(2)$ fails. Finally, in Section \ref{etre2} we apply Theorem \ref{centralei}  to noncompact hypersurfaces of constant mean curvature. We have also included two appendices, where some facts on harmonic-radius bounds and on abstract Riemannian gluings have been collected for the convenience of the reader.

\section{Setting and notation}\label{anfang}

We fix an arbitrary smooth Riemannian $m$-manifold  $M\equiv (M,g)$ \footnote{In the sequel a manifold will always be understood to be without boundary, unless otherwise stated} with $\nabla$ the Levi-Civita connection on $M$. We will denote the corresponding distance function with $\Id(\bullet,\bullet)$, the open balls with $\mathrm{B}_a(x)$, $x\in M$, $a>0$, and the volume measure with $\mu(\Id x):=\mathrm{vol}(\Id x)$, where whenever there is no danger of confusion we shall simply write $\int f \z$ instead of $\int_M f\z$. The symbol $r_{\mathrm{inj}}(x)\in (0,\infty]$ will stand for the injectivity radius at $x$, with 
$$
r_{\mathrm{inj}}(M):=\inf_{x\in M}r_{\mathrm{inj}}(x)\in [0,\infty]
$$
the global injectivity radius.\\
Let us develop some further geometric notation which will be used in the sequel: If $E\to M$ is a smooth Euclidean vector bundle, then whenever there is no danger of confusion we will denote the underlying Euclidean structure simply with $(\bullet,\bullet)_x$, $x\in M$, $\left|\bullet\right|_x:=\sqrt{(\bullet,\bullet)}_x$ will stand for the corresponding norm on $E_x$. Using $\mu$, for any $1\leq p\leq \infty$ we get the corresponding $\IL^p$-spaces of equivalence classes of Borel sections $\Gamma_{\IL^p}(M,E)$, with their norms
$$
\left\| f\right\|_{p}:=\begin{cases}&\Big(\int_M \big|f(x)\big|^p_x  \mu(\Id x)\Big)^{1/p} 
,\text{ if $p<\infty$} \\
&\inf\{C|C\geq 0, |f|\leq C\text{ $\mu$-a.e.}\},\text{ if $p=\infty$.}\end{cases}
$$
The symbol $\left\langle\bullet,\bullet \right\rangle$ will stand for the canonical inner product on the Hilbert space $\Gamma_{\IL^2}(M,E)$, and  '$\dagger$' will denote the formal adjoint with respect to $\left\langle\bullet,\bullet \right\rangle$ of a smooth linear partial differential operator that acts on some $E\to M$ as above.\\
We equip $\T^*M$ with its canonical Euclidean structure 
$$
(\alpha_1,\alpha_2):= g(\alpha^{\sharp}_1,\alpha^{\sharp}_2),  \>\> \alpha_1,\alpha_2\in\Gamma_{\C^{\infty}}(M,\T^*M),
$$
where $\alpha^{\sharp}_j$ stands for the vector field which is defined by $\alpha$ in terms of $g$. This produces canonical Euclidean metrics on all bundles of $k$-times contravariant and $l$-times covariant tensors $\T^{k,l}M\to M$. Next, $\nabla$ induces a Euclidean covariant derivative on $\T^*M=\T^{0,1}\to M$ through
$$
\nabla_{X_1} \alpha(X_2) := X_1(\alpha(X_2))-\alpha(\nabla_{X_1}X_2),  
$$
for any smooth $1$-form $\alpha$ and any smooth vector fields  $X_1,X_2$ on $M$, which of course means nothing but $(\nabla_{X_1}\alpha)^{\sharp}= \nabla_{X_1}\alpha^{\sharp}$, and these data are tensored to give a Euclidean covariant derivative on $\T^{k,l}M\to M$ which, by the above abuse of notation, is always denoted with $\nabla$. Thus, for any $u\in \C^{\infty}(M)$ we have 
\begin{align*}
\> \mathrm{Hess}(u):=\nabla\Id u \in \Gamma_{\C^{\infty}}(M,\T^{0,2} ).
\end{align*}
The gradient $\grad(u)\in \Gamma_{\C^{\infty}}(M,\T M)$, is defined by 
$$
(\grad(u), X):=\Id u (X)\text{ for any $X\in\Gamma_{\C^{\infty}}(M,\T M)$},
$$ 
with
$$
\Id:\Gamma_{\C^{\infty}}(M,\wedge^{\bullet} \T^* M)\longrightarrow \Gamma_{\C^{\infty}}(M,\wedge^{\bullet+1}\T^* M )
$$
the exterior differential, and where as usual $0$-forms are identified with functions. Then the divergence $\mathrm{div}(X)\in\C^{\infty}(M)$ of a smooth vector field $X$ on $M$ is given by $\mathrm{div}(X)=\Id^{\dagger} X^{\flat}$, where $X^{\flat}$ stands for the $1$-form which is defined by $X$ in terms of $g=(\bullet,\bullet)$. Let us denote with 
$$
\Delta_{\bullet}:=\Id^{\dagger}\Id + \Id \Id^{\dagger}:\Gamma_{\C^{\infty}}(M,\wedge^{\bullet}\T^* M )\longrightarrow \Gamma_{\C^{\infty}}(M,\wedge^{\bullet}\T^* M )
$$
the Laplace-Beltrami operator on differential forms. Note that our sign convention is such that \emph{$\Delta_{\bullet}$ is nonnegative}, and the Friedrichs realization of $\Delta_{\bullet}$ in $\Gamma_{\mathsf{L}^{2}}(M,\wedge^{\bullet}\T^* M )$ will be denoted with the same symbol. In the sequel, we will freely use the formulas
\begin{align*}
&\mathrm{Hess}(u_1 u_2) =u_2\mathrm{Hess}(u_1) + \Id u_1\otimes \Id u_2+\Id u_2\otimes \Id u_1+ u_1\mathrm{Hess}(u_2),\\
 &\Delta u_1=-\mathrm{tr}(\mathrm{Hess}(u_1))=-\mathrm{div}(\grad(u_1)),\\
&\Delta(u_1 u_2)=  \Delta(u_1) u_2- 2(\grad(u_1),\grad(u_2))+\Delta(u_2) u_1, \\
&\mathrm{div}(u_1 X) =(\grad(u_1),X)+ u_1 \mathrm{div}(X),
\end{align*}
valid for all smooth functions $u_1$, $u_2$ and smooth vector fields $X$ on $M$. \\
We close this section with some conventions and notation which concerns curvature data: The curvature tensor $\mathrm{R}$ is read as a section 
$$
\mathrm{R}\in \Gamma_{\C^{\infty}}(M, \T^{1,3} M ),
$$
given for smooth vector fields $X,Y,Z$ by 
$$
\mathrm{R}(X, Y,Z):= \nabla_X \nabla_Y Z-\nabla_Y\nabla_X Z-\nabla_{[X,Y]}Z\in \Gamma_{\C^{\infty}}(M,  \T M).
$$
Then the Ricci curvature 
$$
\mathrm{Ric}\in \Gamma_{\C^{\infty}}(M,  \T^{0,2} M)
$$
is the section given by the fiberwise trace 
$$
\mathrm{Ric}(Z,Y)=\mathrm{tr}(X\mapsto \mathrm{R}(X, Y,Z))\in \C^{\infty}(M).
$$
If $m\geq 2$, then for any $x\in M$, the sectional curvature of a two dimensional subspace $A=\mathrm{span}(X,Y)$ of $\T_x M$ is well-defined by
$$
\mathrm{Sec}(A):=\f{(\mathrm{R}(X,Y,Y),X)}{|X\wedge Y|^2}\in\IR.
$$
Finally, we mention that whenever we write $C=C(a_1,\dots,a_l)$ for a constant, this means that $C$ only depends on the parameters $a_1,\dots,a_l$, and nothing else. 

\section{Consequences of the Calder\'on-Zygmund inequality and local considerations}\label{cons}

In this section, we are going to collect some abstract and fundamental facts on Calder\'on-Zygmund inequalities. \\

We start with:

\begin{Definition}\label{deff} Let $1<p<\infty $. We say that $M\equiv (M,g)$ satisfies the $\mathsf{L}^p$-Calder\'on-Zygmund inequality (or in short $\mathrm{CZ}(p)$), if there are $C_1\geq 0$, $C_2>0$, such that for all $u\in\C^{\infty}_{\Ic}(M)$ one has
 \begin{equation}
\left\Vert \mathrm{Hess}\left(  u\right)  \right\Vert _{p}\leq
C_{1}\left\Vert u\right\Vert _{p}+C_{2}\left\Vert \Delta u\right\Vert
_{p}. \label{cz}
\end{equation}
\end{Definition}
Obviously, if some $\mathrm{CZ}(p)$ holds on a Riemannian manifold $M$ then, by restriction, the same inequality holds on any open subset of $M$ and with the same constants. Furthermore, we have directly excluded the extremal cases $p=1$ and $p=\infty$ in Definition \ref{deff} since the corresponding hypothetical elliptic estimates fail \cite{ornstein, delee} for the Euclidean Laplace operator $\sum_i \partial_i^2$, thus (\ref{cz}) with $p=1$ or $p=\infty$ cannot hold in general. \\
Let us record that a certain scaling rigidity in the constants implies automatically that one can pick $C_1=0$ (noting that in Riemannian geometry, such a stability typically appears in the context of nonnegative Ricci curvature): 

\begin{Remark} 1. Let $1<p<\infty$, and assume that there are $C_1\geq 0 $, $C_2>0$ such that for all $0<\lambda\leq 1$ one has $(\ref{cz})$ with respect to the Riemannian metric $\lambda^2g$. Then one has $(\ref{cz})$ with $C_1=0$ (with respect to $g$). Indeed, the assumption implies
\[
\left\Vert \mathrm{Hess}\left(  u\right)  \right\Vert _{p}\leq
C_{1}\lambda^{2p}\left\Vert u\right\Vert _{p}+C_{2}\left\Vert \Delta
u\right\Vert _{p},
\]
with respect to $g$ with $C_j$ uniform in $\lambda$, and we can take $\lambda\rightarrow 0+$. \\
2. As a particular case of the above situation, assume $\mathrm{Ric}\geq 0$, $1<p<\infty$ and that there are $C_1\geq 0 $, $C_2>0$, which do not depend on $g$, such that one has $(\ref{cz})$. Then one has $(\ref{cz})$ with $C_1=0$.
\end{Remark}

We continue with some important consequences of the Calder\'on-Zygmund inequalities. Let us start with some remarks concerning the connection between $\mathrm{CZ}(p)$ and second order $\mathsf{L}^p$-Sobolev spaces. In fact, precisely this context was the original motivation for our study of Calder\'on-Zygmund inequalites. To this end, we first list some conventions and notation on Sobolev spaces: For any $1\leq  p<\infty$, the Banach space $\mathsf{H}^{1,p}(M)$ is defined by 
$$
\mathsf{H}^{1,p}(M):=\left\{u\left| \>u\in\mathsf{L}^{p}(M), |\mathrm{grad}(u)|\in\mathsf{L}^{p}(M)\text{ as distr.}\right\}\right.,
$$
with its natural norm 
\begin{align*}
\left\| u\right\|_{1,p}:=  \left\|u \right\|_p + \left\| \mathrm{grad}(u) \right\|_p.
\end{align*}
Likewise, one has the Banach space
$$
\mathsf{H}^{2,p}(M):=\left\{u\left| \>u\in\mathsf{L}^{p}(M), |\mathrm{grad}(u)|,|\mathrm{Hess}(u)|\in\mathsf{L}^{p}(M)\text{ as distr.}\right\}\right.,
$$
with its natural norm $\left\| u\right\|_{2,p}$. By a generalized Meyers-Serrin type theorem \cite{guidetti}, one has that the linear space
$$
\text{$\mathsf{C}^{\infty}(M)\cap \mathsf{H}^{k,p}(M)$ is dense in $\mathsf{H}^{k,p}(M)$}
$$
(a fact which is actually true for all $k\in\IN$ with the natural definition of higher order Sobolev spaces). Finally, we define $\mathsf{H}^{k,p}_0(M)\subset \mathsf{H}^{k,p}(M)$ as usual to be the closure of $\C^{\infty}_{\Ic}(M)$ in $\mathsf{H}^{k,p}(M)$. 

\begin{Remark}\label{sobo} Let $1\leq  p<\infty$. \\
1. Every $u\in \mathsf{H}^{2,p}(M)$ satisfies $\Delta u\in \mathsf{L}^p(M)$ in the sense of distributions. Indeed, integrating by parts and using 
$$
\mathrm{Hess}^{\dagger}\circ \mathrm{tr}^{\dagger}=(\mathrm{tr}\circ\mathrm{Hess})^{\dagger},
$$
where we consider $\mathrm{tr}(\bullet)$ as a smooth zeroth order linear differential operator, one gets that the distribution $\Delta u$ is in fact a Borel function which coincides with
$$
\Delta u(x)=-\mathrm{tr}_x(\mathrm{Hess}(u)|_x),
$$
so that
$$
|\Delta u|\leq \sqrt{m}|\mathrm{Hess}(u)|\>\text{  $\mu$-a.e. in $M$.}
$$
2. If $u\in  \mathsf{H}^{2,p}_0(M)$, and if $\{u_k\}\subset \C^{\infty}_{\Ic}(M)$ is a sequence such that $\left\| u-u_k\right\|_{2,p}\to 0$, then obviously $\{u_k\}$ is Cauchy in $\mathsf{L}^p(M)$, $\{\mathrm{Hess}(u_k)\}$ is Cauchy in $\Gamma_{\mathsf{L}^p}(M, \mathrm{T}^{0,2}M)$, and using 
\begin{align}
|\Delta \psi|\leq \sqrt{m}|\mathrm{Hess}(\psi)|,\>\text{  for all $\psi\in\C^{\infty}(M)$}, \label{sws}
\end{align}
it also follows that $\{\Delta u_k\}$ is Cauchy in $\mathsf{L}^p(M)$. In particular, one necessarily has
$$
\left\| u-u_k\right\|_{p}\to 0,\> \left\| \mathrm{Hess}(u)-\mathrm{Hess}(u_k)\right\|_{p}\to 0, \>\left\| \Delta u-\Delta u_k\right\|_{p}\to 0. 
$$
\end{Remark}

Remark \ref{sobo}.2 immediately implies that Calder\'on-Zygmund inequalities always extend to $\mathsf{H}^{2,p}_0(M)$ in the following sense:

\begin{Corollary}\label{sob2} Let $1<p<\infty$. If one has (\ref{cz}), then this inequality extends from $\C^{\infty}_{\Ic}(M)$ to $\mathsf{H}^{2,p}_0(M)$ with the same constants.
\end{Corollary}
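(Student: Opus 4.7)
The plan is to exploit Remark \ref{sobo}.2 directly: the density definition of $\mathsf{H}^{2,p}_0(M)$ together with the pointwise bound $|\Delta \psi|\leq \sqrt{m}|\mathrm{Hess}(\psi)|$ for smooth $\psi$ gives simultaneous convergence in $\mathsf{L}^p$ of the function, its Hessian, and its Laplacian, which is exactly what is needed to pass $\mathrm{CZ}(p)$ to the limit.

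More concretely, I would fix $u \in \mathsf{H}^{2,p}_0(M)$ and, by definition of this space, pick a sequence $\{u_k\}\subset \C^{\infty}_{\Ic}(M)$ with $\|u-u_k\|_{2,p}\to 0$. By Remark \ref{sobo}.2, this yields
\[
\|u-u_k\|_p\to 0,\qquad \|\mathrm{Hess}(u)-\mathrm{Hess}(u_k)\|_p\to 0,\qquad \|\Delta u-\Delta u_k\|_p\to 0.
\]
Since each $u_k\in \C^{\infty}_{\Ic}(M)$, the hypothesis (\ref{cz}) gives
\[
\|\mathrm{Hess}(u_k)\|_p \leq C_1\|u_k\|_p + C_2\|\Delta u_k\|_p.
\]
Passing to the limit $k\to \infty$ on both sides, using the three convergences above and continuity of the $\mathsf{L}^p$-norms, yields the same inequality for $u$, with identical constants $C_1$, $C_2$.

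There is essentially no obstacle here: the whole content is packaged in Remark \ref{sobo}.2, which in turn rests on the pointwise inequality (\ref{sws}). The only subtlety worth flagging is that one must interpret $\mathrm{Hess}(u)$ and $\Delta u$ for $u\in\mathsf{H}^{2,p}_0(M)$ as the (distributional) objects that agree with the $\mathsf{L}^p$-limits of $\mathrm{Hess}(u_k)$ and $\Delta u_k$ — an identification which is legitimate because these limits are independent of the chosen approximating sequence, as can be checked by standard distributional considerations. Hence the corollary follows with no new estimate required.
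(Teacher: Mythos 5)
Your argument is exactly the one the paper intends: Corollary \ref{sob2} is stated as an immediate consequence of Remark \ref{sobo}.2, and your proof simply spells out the limit passage that the paper leaves implicit. Correct, same approach.
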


The following definition will be convenient (cf. \cite{Gu}):

\begin{Definition} \emph{a)} $M$ is said to admit a \emph{sequence $(\chi_n)\subset \mathsf{C}^{\infty}_{\mathrm{c}}(M)$ of Laplacian cut-off functions}, if $(\chi_n)$ has the following properties\emph{:}
\begin{itemize}
\item[\emph{(C1)}] $0 \le \chi_n(x) \le 1$ for all $n\in\IN$, $x \in M$,

\item[\emph{(C2)}] for all compact $K\subset M$, there is an $n_0(K)\in\IN$ such that for all $n\geq n_0(K)$ one has $\chi_n\mid_{K}= 1$,

\item[\emph{(C3)}] $\sup_{x \in M} \left|\Id\chi_n(x)\right|_x \to 0$ as $n\to \infty$,

 \item[\emph{(C4)}] $\sup_{x \in M} \left|\Delta \chi_n(x)\right| \to 0$ as $n\to\infty$. 
\end{itemize}

\emph{b)} $M$ is said to admit a \emph{sequence $(\chi_n)\subset \mathsf{C}^{\infty}_{\mathrm{c}}(M)$ of Hessian cut-off functions}, if $(\chi_n)$ has the above properties (C1), (C2), (C3), and in addition
\begin{itemize}
 \item[\emph{(C4\rq{})}] $\sup_{x \in M} \left|\mathrm{Hess}( \chi_n)(x)\right|_x \to 0$ as $n\to\infty$.  
\end{itemize}
\end{Definition}

\begin{Remark} By (\ref{sws}), any sequence of Hessian cut-off functions is automatically a sequence of Laplacian cut-off functions. 
\end{Remark}

One has:

\begin{Proposition}\label{laplacian} \emph{a)} If $M$ is geodesically complete with $\mathrm{Ric}\geq 0$, then $M$ admits a sequence of Laplacian cut-off functions.\\
\emph{b)} If $M$ is geodesically complete with $\left\|\mathrm{R}\right\|_{\infty}<\infty$, then $M$ admits a sequence of Hessian cut-off functions.
\end{Proposition}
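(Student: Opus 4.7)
My approach is to construct both sequences in the form $\chi_{n}:=\phi(\tilde r/n)$, where $\phi\in\C^{\infty}(\IR,[0,1])$ is a fixed cutoff with $\phi\equiv 1$ on $(-\infty,1]$ and $\phi\equiv 0$ on $[2,\infty)$, and $\tilde r\in\C^{\infty}(M)$ is a smoothing of the distance $r(x):=\Id(x,o)$ from a fixed basepoint $o\in M$, satisfying $\tilde r\geq r-1$. Under this setup, geodesic completeness (via Hopf-Rinow) guarantees that $\mathrm{supp}(\chi_n)\subset\{\tilde r\leq 2n\}$ is compact and that $\chi_n\equiv 1$ on $\{r\leq n-1\}$, so that (C1) and (C2) hold automatically. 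The identities
\begin{equation*}
\grad\chi_{n}=\tfrac{1}{n}\phi'(\tilde r/n)\grad\tilde r,\qquad \Delta\chi_{n}=-\tfrac{1}{n^{2}}\phi''(\tilde r/n)|\grad\tilde r|^{2}+\tfrac{1}{n}\phi'(\tilde r/n)\Delta\tilde r,
\end{equation*}
\begin{equation*}
\mathrm{Hess}(\chi_{n})=\tfrac{1}{n^{2}}\phi''(\tilde r/n)\,\Id\tilde r\otimes\Id\tilde r+\tfrac{1}{n}\phi'(\tilde r/n)\mathrm{Hess}(\tilde r)
\end{equation*}
then reduce the task to constructing $\tilde r$ with uniform bounds on $|\grad\tilde r|$ and $|\Delta\tilde r|$ (for part a) or on $|\grad\tilde r|$ and $|\mathrm{Hess}(\tilde r)|$ (for part b).

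For part \emph{(a)}, the Laplacian comparison theorem under $\mathrm{Ric}\geq 0$ gives the one-sided distributional bound $\Delta r\geq -(m-1)/r$ on $M\setminus\{o\}$ (with our sign convention). A Greene-Wu type regularization, obtained by mollifying $r$ in small normal coordinate patches and patching via a partition of unity, produces a smooth $\tilde r$ with $|\grad\tilde r|\leq C$ and $|\Delta\tilde r|\leq C$ on $\{r\geq 1\}$, where $C$ depends only on $m$. Plugging into the formulas above on $\mathrm{supp}(\grad\chi_n)\subset\{n\leq\tilde r\leq 2n\}$ gives $\|\grad\chi_n\|_{\infty}\leq C/n$ and $\|\Delta\chi_n\|_{\infty}\leq C/n$, verifying (C3) and (C4).

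For part \emph{(b)}, the hypothesis $\|\mathrm{R}\|_{\infty}<\infty$ provides a two-sided sectional curvature bound, so the Hessian comparison theorem yields a pointwise estimate $|\mathrm{Hess}(r)|\leq C$ on $\{r\geq 1\}$ away from the cut locus of $o$. The same Greene-Wu smoothing, now carefully tracking second covariant derivatives, returns $\tilde r$ with $|\grad\tilde r|\leq C$ and $|\mathrm{Hess}(\tilde r)|\leq C$ globally, from which $\|\mathrm{Hess}(\chi_n)\|_{\infty}\leq C/n\to 0$, which is (C4'). The main obstacle in both parts is this smoothing step: the distance function is only Lipschitz and the comparison estimates hold either distributionally or only off the cut locus, so one must choose a locally finite cover whose charts and associated partition of unity have derivatives controlled by the curvature hypothesis in order that local mollifications patch together without spoiling the bounds. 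Part (b) is the more delicate of the two, since pointwise control of second derivatives through a partition of unity requires that transitions between overlapping normal charts of bounded size not introduce Hessians larger than $O(1)$; boundedness of $\mathrm{R}$ is exactly what ensures this.
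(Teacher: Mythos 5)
Your reduction to an exhaustion function $\tilde r$ with controlled derivatives, followed by composition with a one-variable cutoff, is exactly the right template, and your part (b) essentially coincides with the paper's proof: there the existence of a smooth $\tilde d$ with $\Id(\cdot,x_0)+1\leq\tilde d\leq \Id(\cdot,x_0)+C$, $|\grad\tilde d|\leq C$, $|\mathrm{Hess}(\tilde d)|\leq C$ under $\|\mathrm{R}\|_{\infty}<\infty$ is quoted from L.-F. Tam (Proposition 26.49 in Chow et al.; see also Cheeger--Gromov's chopping paper). Be aware, though, that this smoothing is itself a substantive theorem: with bounded curvature but no injectivity radius bound, the Hessian comparison controls $\mathrm{Hess}(r)$ only up to the cut/conjugate locus, and the distributional Hessian of $r$ has a singular part there, so a naive mollification does not obviously preserve a two-sided Hessian bound. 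Your proposal correctly flags this as the main obstacle but does not resolve it; citing Tam or Cheeger--Gromov is the honest way to close it.

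Part (a), however, contains a genuine gap. Under $\mathrm{Ric}\geq 0$ the Laplacian comparison theorem is strictly one-sided: it bounds $\mathrm{tr}\,\mathrm{Hess}(r)$ from above by $(m-1)/r$, but gives no lower bound. Indeed $\mathrm{tr}\,\mathrm{Hess}(r)$ can tend to $-\infty$ as one approaches the cut locus (focusing of geodesics is perfectly compatible with $\mathrm{Ric}\geq 0$), and the distributional Laplacian of $r$ carries a negative singular part on the cut locus. Consequently no Greene--Wu mollification of $r$ can produce $|\Delta\tilde r|\leq C$ with $C=C(m)$; the one-sided bound also does not save you in the formula for $\Delta\chi_n$, since with $\phi'\leq 0$ the term $\tfrac1n\phi'(\tilde r/n)\Delta\tilde r$ becomes large and positive exactly where $\Delta\tilde r$ is very negative. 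This is why the paper's proof of (a) does not smooth the distance function at all: it invokes the construction from \cite{Gu}, which rests on the Cheeger--Colding almost-rigidity machinery (their annular cutoff functions with $|\grad\phi|\leq C/r$ and $|\Delta\phi|\leq C/r^2$). Some input of that depth is genuinely required here; the elementary comparison-plus-mollification route does not go through.
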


\begin{proof} a) This result is included in \cite{Gu}. It relies on a rigidity result by Cheeger and Colding \cite{CC}.\\
b) Let $m=\dim M$. By a result of L.-F. Tam\footnote{
a completely different construction of an exhaustion function with bounded gradient and Hessian is also contained in \cite{CG-chopping}.}, see Proposition 26.49 in \cite{chow}, one has that there is a constant $C=C(\left\|\mathrm{R}\right\|_{\infty},m)>0$, such that for any $x_0\in M$ there is a smooth function $\tilde{d}=\tilde{d}_{x_0}:M\to [0,\infty)$ satisfying
\begin{align}
\Id(\bullet,x_0)+1\leq \tilde{d}\leq \Id(\bullet,x_0)+C,\> |\grad(\tilde{d})|\leq C, \>|\mathrm{Hess}(\tilde{d})|\leq C.
\end{align}
Pick now a smooth function $t:\IR\to\ [0,1]$ which is compactly supported, equal to $1$ in $[0,C+\frac{1}{2}]$, and zero on $[C+1,\infty)$. Then $\chi_n(x):=t(\tilde{d}(x)/n)$ has the required properties. 
\end{proof}

\begin{Proposition}\label{saa}\emph{a)} Assume that (\ref{cz}) holds for some $1<p<\infty$ and that $M$ admits a sequence of Laplacian cut-off functions. Then one has $\mathsf{H}^{2,p}_0(M)=\mathsf{H}^{2,p}(M)$, in particular (by Corollary \ref{sob2}), (\ref{cz}) extends to $\mathsf{H}^{2,p}(M)$ with the same constants.\\
\emph{b)} If $M$ admits a sequence of Hessian cut-off functions, then one has $\mathsf{H}^{2,p}_0(M)=\mathsf{H}^{2,p}(M)$ for all $1<p<\infty$. 
\end{Proposition}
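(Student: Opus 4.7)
The goal in both (a) and (b) is to show that every $u\in\mathsf{H}^{2,p}(M)$ lies in $\mathsf{H}^{2,p}_0(M)$. By the generalized Meyers-Serrin density quoted in the text, I would first reduce to the case $u\in\mathsf{C}^{\infty}(M)\cap\mathsf{H}^{2,p}(M)$. The canonical candidate approximating sequence is then $v_n:=\chi_n u\in\mathsf{C}^{\infty}_{\Ic}(M)$, where $(\chi_n)$ is the given cut-off sequence, and the task is to prove $v_n\to u$ in the $\mathsf{H}^{2,p}$-norm.

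The $\mathsf{L}^p$ and gradient parts of this convergence are routine in both cases. From the identities
$$v_n-u=(\chi_n-1)u,\qquad \grad(v_n)-\grad(u)=(\chi_n-1)\grad(u)+u\grad(\chi_n),$$
combined with (C1), (C2), (C3) and the facts $u,|\grad(u)|\in\mathsf{L}^p$, dominated convergence immediately yields $\|v_n-u\|_p+\|\grad(v_n-u)\|_p\to 0$.

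For (b) I would handle the Hessian analogously by expanding
$$\mathrm{Hess}(v_n-u)=(\chi_n-1)\mathrm{Hess}(u)+u\,\mathrm{Hess}(\chi_n)+\Id u\otimes\Id\chi_n+\Id\chi_n\otimes\Id u,$$
and sending each of the four terms to $0$ in $\mathsf{L}^p$: the first by dominated convergence on $|\mathrm{Hess}(u)|\in\mathsf{L}^p$, the second via (C4\rq{}) together with $u\in\mathsf{L}^p$, and the last two via (C3) together with $|\Id u|\in\mathsf{L}^p$. This closes (b) without any use of a Calder\'on-Zygmund inequality.

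In (a) the same direct expansion breaks down, since (C4\rq{}) is not available and there is no pointwise control on $u\,\mathrm{Hess}(\chi_n)$. My plan is to replace this missing control by $\mathrm{CZ}(p)$ itself. First I would establish $\Delta v_n\to\Delta u$ in $\mathsf{L}^p$ using
$$\Delta(\chi_n u)=\chi_n\Delta u-2(\grad(\chi_n),\grad(u))+u\,\Delta\chi_n,$$
where $\Delta u\in\mathsf{L}^p$ by Remark \ref{sobo}, the first term converges by dominated convergence, and the last two vanish in $\mathsf{L}^p$ by (C3) and (C4). Next, since $v_n-v_m\in\mathsf{C}^{\infty}_{\Ic}(M)$, applying (\ref{cz}) to these differences and using the two Cauchy properties just established shows that $\{\mathrm{Hess}(v_n)\}$ is Cauchy in $\Gamma_{\mathsf{L}^p}(M,\T^{0,2}M)$, hence converges to some limit $\Phi$. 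To identify $\Phi=\mathrm{Hess}(u)$ I would argue distributionally: $v_n\to u$ in $\mathsf{L}^p$ gives $\mathrm{Hess}(v_n)\to\mathrm{Hess}(u)$ as distributions, and since $\mathrm{Hess}(v_n)\to\Phi$ in $\mathsf{L}^p$ also distributionally, the two limits must agree. Combined with the previous paragraph this yields $v_n\to u$ in $\mathsf{H}^{2,p}(M)$ and then (via Corollary \ref{sob2}) the extension of (\ref{cz}) to $\mathsf{H}^{2,p}(M)$. The main obstacle is precisely this identification of the $\mathsf{L}^p$-limit of $\mathrm{Hess}(v_n)$ in the absence of pointwise Hessian bounds on $(\chi_n)$; the trick of combining the $\mathrm{CZ}(p)$ Cauchy criterion with a distributional-limit argument is what drives part (a).
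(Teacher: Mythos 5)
Your proof is correct and takes essentially the same route as the paper: part (b) is exactly the paper's argument (reduce to smooth $u$ by the Meyers--Serrin type theorem and expand $\mathrm{Hess}(\chi_n u)$ by the product rule, using (C1)--(C3) and (C4')), while for part (a) the paper simply cites \cite{Gu}, and your reconstruction --- applying $\mathrm{CZ}(p)$ to $v_n-v_m$ to get an $\mathsf{L}^p$-Cauchy Hessian sequence and identifying its limit distributionally --- is precisely that argument.
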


\begin{proof} Part a) has been observed in \cite{Gu}. Part b) follows from the same argument: Given a smooth $f\in \mathsf{H}^{2,p}(M)$, pick a sequence $(\chi_n)$ of Hessian cut-off functions. With $f_n:=\chi_nf$, using 
$$
\mathrm{Hess}(f_n) =f\mathrm{Hess}(\chi_n) + \Id \chi_n\otimes \Id f+\Id f\otimes \Id \chi_n+ \chi_n\mathrm{Hess}(f),
$$
one easily gets that $f_n$ converges to $f$ in $\mathsf{H}^{2,p}(M)$.
\end{proof}

We immediately get:

\begin{Corollary}\label{dssss} If $M$ is geodesically complete with $\left\|\mathrm{R}\right\|_{\infty}<\infty$, then one has $\mathsf{H}^{2,p}_0(M)=\mathsf{H}^{2,p}(M)$ for all $1<p<\infty$.
\end{Corollary}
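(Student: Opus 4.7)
The plan is essentially to combine the two preceding propositions; no new analytic input is required. Under the hypotheses of the corollary, namely geodesic completeness and $\left\|\mathrm{R}\right\|_\infty<\infty$, Proposition \ref{laplacian}(b) directly produces a sequence $(\chi_n)\subset\mathsf{C}^\infty_{\mathrm{c}}(M)$ of Hessian cut-off functions. Feeding this sequence into Proposition \ref{saa}(b) then yields $\mathsf{H}^{2,p}_0(M)=\mathsf{H}^{2,p}(M)$ for every $1<p<\infty$.

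To make this slightly more explicit, I would recall how the argument in Proposition \ref{saa}(b) works. Given an arbitrary $f\in\mathsf{H}^{2,p}(M)$, the generalized Meyers--Serrin theorem lets us assume (by first approximation) that $f\in\mathsf{C}^\infty(M)\cap\mathsf{H}^{2,p}(M)$. Set $f_n:=\chi_n f\in\mathsf{C}^\infty_{\mathrm{c}}(M)$. Using the product rule
\[
\mathrm{Hess}(f_n)=f\,\mathrm{Hess}(\chi_n)+\Id\chi_n\otimes \Id f+\Id f\otimes \Id\chi_n+\chi_n\,\mathrm{Hess}(f),
\]
one checks $\mathsf{L}^p$-convergence term by term: the uniform bounds (C1) and dominated convergence give $\chi_n f\to f$, $\chi_n\mathrm{Hess}(f)\to\mathrm{Hess}(f)$ and $\chi_n\grad(f)\to\grad(f)$ in $\mathsf{L}^p$; the sup-norm decay (C3) and (C4') of $\Id\chi_n$ and $\mathrm{Hess}(\chi_n)$, combined with $f,\grad(f)\in\mathsf{L}^p(M)$, kill the remaining three terms. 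Hence $f_n\to f$ in $\mathsf{H}^{2,p}(M)$, so $f\in\mathsf{H}^{2,p}_0(M)$.

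I would not expect any real obstacle here: all the heavy lifting (the construction of Hessian cut-offs from the work of Tam and the general cut-off/approximation scheme) is already packaged in Proposition \ref{laplacian}(b) and Proposition \ref{saa}(b). The only point worth a line of explanation is that Proposition \ref{saa}(b) is stated for smooth $f$, so one invokes the Meyers--Serrin density of $\mathsf{C}^\infty(M)\cap\mathsf{H}^{2,p}(M)$ in $\mathsf{H}^{2,p}(M)$ (recalled in the paragraph preceding Remark \ref{sobo}) to reduce to that case, after which a diagonal sequence extracted from the $f_n$'s shows that a general $f\in\mathsf{H}^{2,p}(M)$ lies in $\mathsf{H}^{2,p}_0(M)$.
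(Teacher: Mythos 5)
Your proposal is correct and follows exactly the paper's route: the corollary is obtained by combining Proposition \ref{laplacian}~b) (geodesic completeness plus $\left\|\mathrm{R}\right\|_{\infty}<\infty$ yields Hessian cut-off functions via Tam's exhaustion function) with Proposition \ref{saa}~b) (Hessian cut-offs imply $\mathsf{H}^{2,p}_0(M)=\mathsf{H}^{2,p}(M)$). Your additional remarks on the term-by-term $\mathsf{L}^p$-convergence and the Meyers--Serrin reduction to smooth $f$ simply make explicit what the paper leaves implicit in the proof of Proposition \ref{saa}~b).
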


Let us continue with a connection between Calder\'on-Zygmund inequalities and global control of solutions to the Poisson equation: Classically, one uses \emph{local} $\mathsf{L}^{p}$-Calder\'{o}n-Zygmund type inequalities in order to get
higher \emph{local} regularity of solutions of the Poisson equation $\Delta u=f$. Indeed,
if $u$ is in $\mathsf{W}_{\mathrm{loc}}^{1,p}$ and $f$ is in $\mathsf{L}_{\mathrm{loc}}^{p}$ then a local Calder\'{o}n-Zygmund type inequality shows $\partial_i\partial_j u$ is in $\mathsf{L}_{\mathrm{loc}}^{p}$, proving that $u$ is in $\mathsf{W}_{\mathrm{loc}}^{2,p}$. In fact, one can use this way of concluding to derive global estimates to solutions of the Poisson equation:

\begin{Proposition}\label{saa3} Let $1<p<\infty$, and assume that $M$ satisfies $\mathrm{CZ}(p)$ and admits a sequence of Hessian cut-off functions. Let $u\in \C^{2}\left(  M\right)$ be a
solution of the Poisson equation
\[
\Delta u=f.
\]
If \thinspace$u,\left\vert \grad( u)\right\vert ,f\in \mathsf{L}^{p}\left(  M\right)  $
then $| \mathrm{Hess}(u) | \in \mathsf{L}^{p}\left(  M\right)$.
\end{Proposition}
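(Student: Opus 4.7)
The plan is to apply the Calder\'on-Zygmund inequality to the compactly supported cutoffs $u_n := \chi_n u$, where $(\chi_n)\subset \C^{\infty}_{\Ic}(M)$ is a sequence of Hessian cut-off functions (which exists by hypothesis), and then to pass to the limit via Fatou's lemma. First I would verify that each $u_n$ lies in $\mathsf{H}^{2,p}_0(M)$: since $u \in \C^{2}(M)$ and $\chi_n\in \C^{\infty}_{\Ic}(M)$, the product $u_n$ is $\C^{2}$ with compact support, and a standard mollification in local charts (patched by a partition of unity) produces approximants in $\C^{\infty}_{\Ic}(M)$ converging to $u_n$ in $\mathsf{H}^{2,p}$-norm. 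Consequently, Corollary \ref{sob2} permits the application of $\mathrm{CZ}(p)$ to $u_n$, giving
\[
\|\mathrm{Hess}(u_n)\|_p \leq C_1 \|u_n\|_p + C_2 \|\Delta u_n\|_p.
\]

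Expanding by the Leibniz rule for $\Delta$, and using $\Delta u = f$,
\[
\Delta u_n = \chi_n f - 2(\grad(\chi_n),\grad(u)) + u\,\Delta(\chi_n),
\]
and using $|\chi_n|\leq 1$ together with the Hessian cut-off properties (C3), (C4\rq{}) (note that (\ref{sws}) yields $\sup|\Delta \chi_n|\leq \sqrt{m}\sup|\mathrm{Hess}(\chi_n)|\to 0$), I obtain a uniform bound on $\|\Delta u_n\|_p$ in terms of $\|u\|_p$, $\|\grad(u)\|_p$, and $\|f\|_p$. Since $\|u_n\|_p\leq \|u\|_p$ as well, the Calder\'on-Zygmund inequality above yields a uniform bound $\|\mathrm{Hess}(u_n)\|_p \leq B<\infty$.

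Finally, the Leibniz rule for the Hessian gives
\[
\chi_n\, \mathrm{Hess}(u) = \mathrm{Hess}(u_n) - u\,\mathrm{Hess}(\chi_n) - \Id\chi_n \otimes \Id u - \Id u \otimes \Id\chi_n,
\]
whose last three terms on the right are $\mathsf{L}^p$-small in $n$ by (C3), (C4\rq{}) and the $\mathsf{L}^p$-integrability of $u$ and $|\grad(u)|$. Hence $\sup_n \|\chi_n\,\mathrm{Hess}(u)\|_p < \infty$, and since (C2) implies $\chi_n\to 1$ pointwise on $M$, Fatou's lemma delivers $|\mathrm{Hess}(u)| \in \mathsf{L}^{p}(M)$. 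The only real obstacle is the justification that $u_n\in \mathsf{H}^{2,p}_0(M)$ despite $u$ being merely $\C^{2}$; this is a routine mollification argument enabled by the compact support of $u_n$, and once it is in hand the rest is bookkeeping with the cut-off properties.
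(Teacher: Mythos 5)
Your proposal is correct and follows essentially the same route as the paper: apply $\mathrm{CZ}(p)$ (extended to $\mathsf{H}^{2,p}_0$ via Corollary \ref{sob2}) to the compactly supported $\C^2$ cutoffs $\chi_n u$, expand $\Delta(\chi_n u)$ and $\mathrm{Hess}(\chi_n u)$ by the Leibniz rules, use (C1)--(C4\rq{}) to get a uniform bound on $\Vert\chi_n\,\mathrm{Hess}(u)\Vert_p$, and pass to the limit. Your explicit remark that $\chi_n u\in\mathsf{H}^{2,p}_0(M)$ needs a mollification argument is a point the paper leaves implicit, and is a welcome clarification rather than a deviation.
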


\begin{Remark}
It is not completely clear to what extent the $\IL^{p}$ assumption on the gradient is technical and related to the method of proof. In any case, it would be interesting to find situations where it is automatically satisfied. Compare also with Corollary \ref{cor_gradestimate}.
\end{Remark}

\begin{proof}[Proof of Proposition \ref{saa3}]
Pick a sequence of Hessian cut-off functions $(\varphi_{k})$ and defining the corresponding sequence of compactly supported $\mathsf{C}^2$-functions $u_{k}=u\varphi_{k}$. Then applying to $u_{k}$ Calder\'{o}n-Zygmund inequality we obtain (using Corollary \ref{sob2})
\begin{align*}
&\left\Vert \varphi_{k}\mathrm{Hess}\left(  u\right)  \right\Vert _{p}\\
&\leq C\left(  \left\Vert u\mathrm{Hess}\left(  \varphi_{k}\right)  \right\Vert
_{p}+\left\Vert \left\vert \grad( u)\right\vert \left\vert \grad(
\varphi_{k})\right\vert \right\Vert _{p}+\left\Vert u\Delta\varphi
_{k}\right\Vert _{p}+\left\Vert \varphi_{k}\Delta u\right\Vert _{p}\right)  .
\end{align*}
Whence, taking the limit as $k\rightarrow \infty$, the claim follows from dominated convergence.
\end{proof}

In order to prove our next application of $\mathrm{CZ}(p)$, a gradient estimate, we will need the following $\IL^p$-interpolation result, which should be of an independent interest, and which will also be used later on to prove local $\mathrm{CZ}(p)$ inequalities:

\begin{Proposition}\label{lemma_interpolation}
\emph{a)} For any $2\leq p<\infty$ there is a constant $C=C(p)>0$ such that for any $\varepsilon>0$, $u\in\mathsf{C}^{\infty}_{\mathrm{c}}(M)$ one has
\begin{align}
\left\Vert \grad(u)\right\Vert_{p}\leq    \frac{C}{\varepsilon}\left\Vert u\right\Vert_{p}+C\varepsilon\left\Vert
\mathrm{Hess}\left(  u\right)  \right\Vert_{p} . \label{cz-3}
\end{align}
\emph{b)} Assume that either $M$ is geodesically complete or that $M$ is a relatively compact open subset of an arbitrary smooth Riemannian manifold. Then for any $1<p\leq  2$ there is a constant $C=C(p)>0$ such that for any $\varepsilon>0$, $u\in\mathsf{C}^{\infty}_{\mathrm{c}}(M)$ one has
\begin{align}
\left\Vert \grad(u)\right\Vert_{p}\leq    \frac{C}{\varepsilon}\left\Vert u\right\Vert_{p}+C\varepsilon\left\Vert
\Delta  u  \right\Vert_{p} . \label{cz-5}
\end{align}
\end{Proposition}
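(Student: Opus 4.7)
The plan is to reduce both interpolation inequalities to the elementary Young-type estimate $\sqrt{ab}\leq \frac{a}{2\varepsilon}+\frac{\varepsilon b}{2}$ by first establishing a multiplicative bound of the form $\|\grad(u)\|_p \leq C(p)\,\|u\|_p^{1/2}X^{1/2}$, with $X = \|\mathrm{Hess}(u)\|_p$ in part (a) and $X = \|\Delta u\|_p$ in part (b). In both parts, the engine is a carefully weighted integration by parts applied to $|\grad(u)|^p$.

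For part (a), I will expand $\int|\grad(u)|^p d\mu = \int(\grad(u),\,|\grad(u)|^{p-2}\grad(u))\,d\mu$ and integrate by parts against $u$, using $\grad|\grad(u)|^2 = 2\,\mathrm{Hess}(u)(\cdot,\grad(u))^{\sharp}$ and $\mathrm{div}(\grad(u)) = -\Delta u$. The resulting divergence splits into two terms, each of which is pointwise bounded, up to a constant depending only on $p$ and $m$, by $|\grad(u)|^{p-2}|\mathrm{Hess}(u)|$ (here I also invoke $|\Delta u|\leq \sqrt{m}\,|\mathrm{Hess}(u)|$). Hölder's inequality with exponents $p,\,p,\,p/(p-2)$ (the case $p=2$ being immediate from $\int|\grad(u)|^2 = \int u\,\Delta u$) then yields
\[
\|\grad(u)\|_p^p \leq C(p)\,\|u\|_p\,\|\mathrm{Hess}(u)\|_p\,\|\grad(u)\|_p^{p-2},
\]
so that $\|\grad(u)\|_p^2 \leq C(p)\,\|u\|_p\,\|\mathrm{Hess}(u)\|_p$. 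A mild regularization via $(|\grad(u)|^2+\eta^2)^{(p-2)/2}$ handles the possible non-smoothness of $|\grad(u)|^{p-2}$ at zeros of $\grad(u)$.

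For part (b), I will weight with $|u|^{p-2}$ instead, regularized by $w_\eta := (u^2+\eta^2)^{1/2}$, for which $\grad(w_\eta) = (u/w_\eta)\grad(u)$. The vector field $w_\eta^{p-2}\grad(u)$ is compactly supported (because $\grad(u)$ is), so integration by parts against $u$ gives the identity
\[
\int w_\eta^{p-4}\bigl((p-1)u^2+\eta^2\bigr)|\grad(u)|^2\,d\mu = \int u\,w_\eta^{p-2}\,\Delta u\,d\mu.
\]
The right-hand side is bounded by $\|w_\eta\|_p^{p-1}\|\Delta u\|_p$ via $|u|\leq w_\eta$ and Hölder. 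Letting $\eta\to 0^{+}$, monotone convergence applies on the left (since $w_\eta^{p-4}$ is monotone increasing as $\eta\to 0$ because $p<4$, and since $\grad(u)=0$ $\mu$-a.e.\ on $\{u=0\}$), while dominated convergence governs the right, producing
\[
(p-1)\int |u|^{p-2}|\grad(u)|^2\,d\mu \leq \|u\|_p^{p-1}\|\Delta u\|_p.
\]
Finally, Hölder's inequality with exponents $2/p$ and $2/(2-p)$ applied to the pointwise factorization $|\grad(u)|^p = \bigl(|\grad(u)|^2|u|^{p-2}\bigr)^{p/2}\,|u|^{p(2-p)/2}$ on $\{u\neq 0\}$ gives
\[
\|\grad(u)\|_p^p \leq \Bigl(\int |\grad(u)|^2|u|^{p-2}\,d\mu\Bigr)^{p/2}\|u\|_p^{p(2-p)/2} \leq C(p)\,\|u\|_p^{p/2}\,\|\Delta u\|_p^{p/2},
\]
hence $\|\grad(u)\|_p\leq C(p)\,\|u\|_p^{1/2}\,\|\Delta u\|_p^{1/2}$, and Young's inequality concludes.

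The main technical obstacle will be the regularization in part (b): the weight $|u|^{p-2}$ is singular on $\{u=0\}$ when $p<2$, and the approximants $w_\eta^{p-2}$ fail to be compactly supported (they equal $\eta^{p-2}$ outside $\mathrm{supp}(u)$). The saving fact is that the test vector field $w_\eta^{p-2}\grad(u)$ still inherits compact support from $\grad(u)$, so the integration by parts is legitimate, and the monotone/dominated convergence estimates above close the limit $\eta\to 0^{+}$. The two global hypotheses on $M$ (geodesic completeness or relative compactness) are natural for the intended applications to local $\mathrm{CZ}(p)$-inequalities; the elementary route sketched here does not appear to use them in any essential way beyond ensuring that $\grad(u)$ and the vector fields built from it are genuinely smooth and compactly supported.
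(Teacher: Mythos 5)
Your part (a) is essentially the paper's own argument: you test against the same vector field $u\,(|\grad(u)|^2+\alpha)^{(p-2)/2}\grad(u)$, regularize in the same way, and the only difference is bookkeeping (you close the estimate with a three-factor H\"older and division by $\|\grad(u)\|_p^{p-2}$, the paper uses Young twice; both also pick up the harmless factor $\sqrt{m}$ from $|\Delta u|\leq\sqrt{m}\,|\mathrm{Hess}(u)|$). Part (b), however, is genuinely different and arguably better. The paper does not prove the multiplicative inequality $\|\grad(u)\|_p\leq C(p)\|u\|_p^{1/2}\|\Delta u\|_p^{1/2}$ at all: it quotes Theorem 4.1 of Coulhon--Duong, which requires geodesic completeness, and then covers the relatively compact case by constructing a complete conformal metric agreeing with $g$ on the subset. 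Your weighted integration by parts with $w_\eta=(u^2+\eta^2)^{1/2}$ yields the identity $\int w_\eta^{p-4}\bigl((p-1)u^2+\eta^2\bigr)|\grad(u)|^2\,\mathrm{d}\mu=\int u\,w_\eta^{p-2}\Delta u\,\mathrm{d}\mu$ (which I have checked against the paper's sign conventions), and it only uses the divergence theorem for a smooth compactly supported field; so your route is self-contained, elementary, and dispenses with the completeness/relative-compactness hypothesis entirely, which also makes the conformal-completion detour unnecessary. Your closing remark that the global hypotheses are not really used is therefore correct for your argument.

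Two small repairs are needed in part (b), both routine. First, $w_\eta\geq\eta$ everywhere, so $\|w_\eta\|_p=\infty$ whenever $\mu(M)=\infty$; either restrict the H\"older estimate to $\mathrm{supp}(\Delta u)$, or bound $|u|\,w_\eta^{p-2}\leq|u|^{p-1}$ pointwise (valid since $p\leq2$), pass to the limit by dominated convergence, and apply H\"older only afterwards to get $\|u\|_p^{p-1}\|\Delta u\|_p$. Second, the left-hand integrand is not literally monotone in $\eta$ because of the additive $\eta^2$; discard that term first, so that the remaining integrand $(p-1)(u^2+\eta^2)^{(p-4)/2}u^2|\grad(u)|^2$ increases monotonically to $(p-1)|u|^{p-2}|\grad(u)|^2$ on $\{u\neq0\}$ (or simply invoke Fatou). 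With these adjustments, and the standard fact that $\grad(u)=0$ $\mu$-a.e.\ on $\{u=0\}$, your proof is complete.
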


\begin{proof} a) Let $u\in \C_{\Ic}^{\infty}\left(  M\right)$ and, having fixed $\alpha>0$, consider
the smooth, compactly supported vector field
\[
X:=u\cdot\left(  \left\vert \grad(u)\right\vert ^{2}+\alpha\right)  ^{\frac{p-2}{2}
}\grad(u).
\]
Using the divergence theorem and elaborating, we obtain
\begin{align*}
&\int\left(  \left\vert \grad(u)\right\vert ^{2}+\alpha\right)  ^{\frac{p-2}
{2}}\left\vert \grad(u)\right\vert ^{2} \z \\
&  \leq  | p-2 |
\int\left\vert u\right\vert \left(  \left\vert \grad(u)\right\vert ^{2}
+\alpha\right)  ^{\frac{p-4}{2}}\left\vert \grad(u)\right\vert ^{2}|
\mathrm{Hess}\left(  u\right) |\z \\
&  +\int\left\vert u\right\vert \left\vert \Delta u\right\vert \left(
\left\vert \grad(u)\right\vert ^{2}+\alpha\right)  ^{\frac{p-2}{2}}\z.
\end{align*}
Letting $\alpha\rightarrow0$ and applying the monotone and dominated
convergence theorems we get\footnote{obviously, by monotone convergence, the same integral inequality holds if $p <2$. However, in this case, the right hand side could be infinite. For instance, in
$\mathbb{R}^2$, this happens if $p=1$ as one can see by taking  $u(x,y)= (x^2 +1)\varphi(x,y)$, where $0\leq \varphi \leq 1$ is a cut-off function satisfying $\varphi =1$ on $[0,1] \times [0,1]$.}
\begin{align}\label{cz-4}
&\int\left\vert \grad(u)\right\vert ^{p}\z\\
&\leq | p-2 |  \int 
\left\vert u\right\vert | \mathrm{Hess}\left(  u\right)  |
 \left\vert \grad(u)\right\vert ^{p-2}\z+\int  \left\vert
u\right\vert \left\vert \Delta u\right\vert   \left\vert \grad(
u)\right\vert ^{p-2}\z. \nn
\end{align}
Now, in both the integrands appearing in the right hand side of (\ref{cz-4}), we use the
Young inequality
\[
ab\leq\frac{1}{\varepsilon^{p^{\prime}}p^{\prime}}a^{p^{\prime}}
+\frac{\varepsilon^{q^{\prime}}}{q^{\prime}}b^{q^{\prime}}, \>a,b\geq 0
\]
with
\[
p^{\prime}=p/2,\text{ }q^{\prime}=p/\left(  p-2\right).
\]
We obtain that the right-hand side of (\ref{cz-4}) is 
\begin{equation}
\leq\frac{1}{\varepsilon^{p^{\prime}}p^{\prime}
}\int\left\vert u\right\vert ^{\frac{p}{2}}| \mathrm{Hess}\left(
u\right)  | ^{\frac{p}{2}}\z+\frac{1}{\varepsilon^{p^{\prime}
}p^{\prime}}\int\left\vert u\right\vert ^{\frac{p}{2}}\left\vert \Delta
u\right\vert ^{\frac{p}{2}}\z+2\frac{\varepsilon^{q^{\prime}}}{q^{\prime}}
\int\left\vert \grad(u)\right\vert ^{p}. \label{cz-5}
\end{equation}
Choose $0<\varepsilon=\varepsilon\left(  p\right)  <1$ so small that
\[
A_{p}:=1-2\left(  p-2\right)  \frac{\varepsilon^{q^{\prime}}}{q^{\prime}}>0,
\]
and let $B_{p}:=1/(\varepsilon^{p^{\prime}}p^{\prime})$. Then, inserting (\ref{cz-5}) into (\ref{cz-4}) we deduce
\begin{align*}
&A_{p}\int\left\vert \grad(u)\right\vert ^{p}\z\\
&\leq\left(  p-2\right)  B_{p}
\int\left\vert u\right\vert ^{\frac{p}{2}}| \mathrm{Hess}\left(
u\right) | ^{\frac{p}{2}}\z+B_{p}\int\left\vert u\right\vert
^{\frac{p}{2}}\left\vert \Delta u\right\vert ^{\frac{p}{2}}\z. \label{cz-6}
\end{align*}
Whence, using twice the Young inequality
\begin{equation}\label{csch}
ab\leq\frac{1}{2\varepsilon^{2}}a^{2}+\frac{\varepsilon^{2}}{2}b^{2}
\end{equation}
with any arbitrary $\varepsilon>0$ we conclude
\begin{align*}
&\int\left\vert \grad(u)\right\vert ^{p}\z\\
&\leq\varepsilon^{-2}C_{p}\int\left\vert
u\right\vert ^{p}\z+\varepsilon^{2}D_{p}\left(\int| \mathrm{Hess}\left(
u\right) | ^{p}\z+\int\left\vert \Delta u\right\vert ^{p}\z\right)
\end{align*}
where we have set
\[
C_{p}:=\frac{\left(  p-1\right)  B_{p}}{2A_{p}}\text{, }D_{p}:=\frac{\max\left(
p-2,1\right)  B_{p}}{2A_{p}}.
\]

b) Assume first that $M$ is geodesically complete. Then by Theorem 4.1 of \cite{CD} we have the multiplicative inequality
\begin{align}
\left\Vert \grad(u)\right\Vert_{p}\leq  C(p) \left\Vert u\right\Vert_{p}^{\frac{1}{2}}\left\Vert
\Delta u   \right\Vert_{p}^{\frac{1}{2}}, \label{multiplicative}
\end{align}
which completes the proof in this case, using once more inequality (\ref{csch}).\\
Assume now that $M$ is a relatively compact open subset of an arbitrary smooth and geodesically incomplete Riemannian manifold $(\overline M, g)$. Then as above it is sufficient to prove that (\ref{multiplicative}) remains valid on $M$, which can be seen, for instance, from the usual construction of complete metrics in a given conformal class: Assume that $\overline M$ is an incomplete open manifold, otherwise the conclusion is trivial by restriction. We consider a smooth, relatively compact exhaustion $M_{k} \subset\subset M_{k+1} \nearrow \overline M$  such that $M \subset\subset M_1$ and we pick any smooth function $\lambda:\overline M \to \mathbb{R}_{\geq 0}$ with the following properties: (i) $\lambda(x) = 0$ on $M_1$; (ii) for every $k \geq 1$, $\lambda(x) = c_k$ on $M_{2k+1} \setminus M_{2k}$, where a sequence $\{c_k\} \subset (0,\infty)$ with $c_k\nearrow \infty$ will be specified later. Next, we define a new metric on $\overline M$ by $g_{\lambda}:= \mathrm{e}^{2\lambda}  g$. By construction, $ g_{\lambda} =  g$ on $M$. Let $r_k = \mathrm{dist}_{g}(\partial M_{2k+1}, \partial M_{2k})$ and choose $\{c_k\}$ in such a way that
$\sum r_k \cdot  \mathrm{e}^{c_k} = +\infty$. Then, $g_{\lambda}$ is geodesically complete. Indeed, if $\gamma : [0,\infty) \to \overline M$ is a divergent path, $\gamma$ is forced to pass across every annulus $M_{2k+1}\setminus M_{2k}$. Therefore, its length satisfies $\ell(\gamma) \geq \sum r_k \cdot  \mathrm{e}^{c_k} =  \infty$ and this characterizes the geodesic completeness.
Since (\ref{multiplicative}) holds on $(\overline{M},g_{\lambda})$, by restriction it holds on its open subset set $M$, as claimed.\\
Another way to prove the validity of (\ref{multiplicative}) on relatively compact open subset of possibly incomplete smooth Riemannian manifold is to use the (less elementary) double construction as explained in the proof of Theorem \ref{relco} a). 
\end{proof}

We immediately get the following Corollary, a gradient estimate which is a variant of (\ref{cz-5}) for arbitrary $p$, and which is useful in establishing compactness results for solutions of the Dirichlet problem for the Poisson equation (cf. the proof of Theorem \ref{relco} a) below):

\begin{Corollary}\label{cor_gradestimate}
In the situation of Proposition \ref{lemma_interpolation}, assume that $\mathrm{CZ}(p)$ holds on $M$ for some $1<p<\infty$. Then, there exists a constant $C>0$, which only depends on the $\mathrm{CZ}(p)$-constants and $p$, such that the following inequality
\[
\|\grad(u)\|_p \leq C (\|\Delta u\|_p + \|u\|_p)
\]
holds for every $u\in \C^{\infty}_\mathrm{c}(M)$.
\end{Corollary}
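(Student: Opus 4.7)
The plan is to combine Proposition \ref{lemma_interpolation} with $\mathrm{CZ}(p)$, splitting according to whether $p\leq 2$ or $p\geq 2$. The statement only uses ingredients that have already been established, so the argument is essentially a one-line combination; the care required is only in tracking the dependence of the final constant on the data.

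If $1<p\leq 2$, then by hypothesis we are in the situation of Proposition \ref{lemma_interpolation}(b), so applying that inequality with $\varepsilon=1$ yields directly
\[
\|\grad(u)\|_p \leq C(p)\big(\|u\|_p+\|\Delta u\|_p\big)
\]
for every $u\in\mathsf{C}^\infty_{\mathrm{c}}(M)$. In this regime $\mathrm{CZ}(p)$ is not even needed, so the final constant only depends on $p$.

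If $2\leq p<\infty$, I would first apply Proposition \ref{lemma_interpolation}(a) with $\varepsilon=1$ to obtain
\[
\|\grad(u)\|_p \leq C(p)\big(\|u\|_p+\|\mathrm{Hess}(u)\|_p\big),
\]
and then bound the Hessian term using $\mathrm{CZ}(p)$ by $\|\mathrm{Hess}(u)\|_p\leq C_1\|u\|_p + C_2\|\Delta u\|_p$. Substituting and absorbing constants yields
\[
\|\grad(u)\|_p \leq C'\big(\|u\|_p+\|\Delta u\|_p\big),
\]
where $C'$ depends only on $p$ and the $\mathrm{CZ}(p)$-constants $C_1$, $C_2$.

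There is no real obstacle: the corollary is a formal consequence of the two main ingredients already at hand. The only point worth emphasizing is that the bookkeeping of constants works out as claimed, since the multiplicative constant coming from the interpolation step depends only on $p$ and the one coming from $\mathrm{CZ}(p)$ depends only on $C_1, C_2$.
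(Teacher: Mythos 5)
Your proof is correct and is exactly the argument the paper intends (the paper states the corollary as an immediate consequence of Proposition \ref{lemma_interpolation} without writing out details): for $p\geq 2$ combine part (a) with $\varepsilon=1$ and $\mathrm{CZ}(p)$ to absorb the Hessian term, and for $1<p\leq 2$ part (b) already gives the estimate outright. The bookkeeping of constants is as you describe.
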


The rest of this section is devoted to local aspects of $\mathrm{CZ}(p)$ inequalities. We start with the following result, where it is claimed that $\mathrm{CZ}(p)$ with $1< p<\infty$ always holds on relatively compact open subsets (where of course the corresponding constants cannot be controlled explicitely), and moreover that $\mathrm{CZ}(p)$ is stable under compact perturbations:

\begin{Theorem}\label{relco} Let $1< p<\infty $.\\
\emph{a)} $\mathrm{CZ}(p)$ holds on any relatively compact open subset $\Omega\subset M$. 
Moreover, if $\Omega\subset M$ is a relatively compact domain with smooth boundary $\partial\Omega$, then $\mathrm{CZ}(p)$ holds in the stronger form
\[
\left\Vert \mathrm{Hess}\left(  u\right)  \right\Vert _{p}\leq
C \left\Vert \Delta u\right\Vert _{p},
\]
for every $u \in \C^{\infty}_{\Ic}(\Omega)$ and for a constant $C>0$. In both cases, the constants depend quite implicitly on the geometry of $\Omega$ (cf. the proof).
\\
\emph{b)} Assume that either $p \geq 2$ or that $1<p<2$ and $M$ is geodesically complete. If there is a relatively compact open subset $\Omega\subset M$ such that $\mathrm{CZ}(p)$ holds on $M\setminus \overline{\Omega}$, then $\mathrm{CZ}(p)$ also holds on $M$. Here, a possible choice of Calder\'on-Zygmund constants on $M$ depends on those of $M\setminus \overline{\Omega}$, those of a relatively compact open neighborhood of $\Omega$, and the choice of a gluing function (cf. the proof for the details).
\end{Theorem}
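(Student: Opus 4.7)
The plan is to reduce part (a) to the classical $\mathsf{L}^p$-theory of second-order elliptic operators, either on a closed compact manifold via doubling or on a smooth bounded domain with the Dirichlet boundary condition, and then to derive part (b) from part (a) by a cutoff decomposition combined with the interpolation estimate of Proposition \ref{lemma_interpolation}.

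For part (a), I would first establish the stronger form when $\partial\Omega$ is smooth. Since $\Omega$ is relatively compact with smooth boundary, the Dirichlet Laplacian $\Delta\vert_{\mathsf{H}^{2,p}\cap \mathsf{H}^{1,p}_0}$ on $\Omega$ has trivial kernel (its first eigenvalue is strictly positive), and the Agmon--Douglis--Nirenberg boundary regularity theorem for second-order elliptic operators on Riemannian manifolds supplies the a priori estimate $\|u\|_{\mathsf{H}^{2,p}(\Omega)}\leq C\|\Delta u\|_p$ for every $u\in \mathsf{H}^{2,p}(\Omega)\cap \mathsf{H}^{1,p}_0(\Omega)$. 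Restricting to $\C^\infty_{\Ic}(\Omega)$ gives $\|\mathrm{Hess}(u)\|_p\leq C\|\Delta u\|_p$, i.e., $\mathrm{CZ}(p)$ with $C_1=0$. An alternative route, apparently favored by the paper, is the doubling construction: glue two copies of $\overline\Omega$ along $\partial\Omega$ to obtain a closed Riemannian manifold on which $\mathrm{CZ}(p)$ is standard (finite coordinate cover, partition of unity, Euclidean Calder\'on--Zygmund), then restrict to one copy. For an arbitrary relatively compact open $\Omega\subset M$, I would enlarge it to a relatively compact open $\Omega'\Supset \overline{\Omega}$ with smooth boundary (always available by a regular value argument applied to a smoothed distance) and use $\C^\infty_{\Ic}(\Omega)\subset \C^\infty_{\Ic}(\Omega')$ to transfer the estimate.

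For part (b), fix a smooth cutoff $\phi\in \C^\infty_{\Ic}(M)$ with $\phi\equiv 1$ on an open neighborhood $U\Supset \overline{\Omega}$ and $\mathrm{supp}(\phi)\Subset \Omega''$ for some relatively compact open $\Omega''\supset \overline{U}$. Given $u\in \C^\infty_{\Ic}(M)$, decompose $u=\phi u+(1-\phi)u$: the first piece lies in $\C^\infty_{\Ic}(\Omega'')$, while the second vanishes on $U$ and hence has support in $M\setminus\overline\Omega$. Applying $\mathrm{CZ}(p)$ on $\Omega''$ (from part (a)) to $\phi u$ and the hypothesized $\mathrm{CZ}(p)$ on $M\setminus\overline\Omega$ to $(1-\phi)u$, then expanding with the Leibniz-type formulas for $\mathrm{Hess}$ and $\Delta$ collected in Section \ref{anfang}, the triangle inequality yields after rearrangement
\[
\|\mathrm{Hess}(u)\|_p\leq A\|u\|_p+B\|\grad(u)\|_p+E\|\Delta u\|_p,
\]
with $A,B,E$ depending on the two sets of $\mathrm{CZ}(p)$-constants and on $\|\phi\|_\infty,\|\Id\phi\|_\infty,\|\mathrm{Hess}(\phi)\|_\infty,\|\Delta\phi\|_\infty$.

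The only remaining step is to dispose of the intermediate gradient term, and this is where the dichotomy $p\geq 2$ versus ($1<p<2$ and $M$ complete) enters. If $p\geq 2$, Proposition \ref{lemma_interpolation}(a) gives $\|\grad(u)\|_p\leq (C/\varepsilon)\|u\|_p+C\varepsilon\|\mathrm{Hess}(u)\|_p$, and choosing $\varepsilon>0$ so small that $CB\varepsilon<\tfrac12$ lets us absorb the Hessian piece into the left-hand side, yielding $\mathrm{CZ}(p)$ on $M$. If $1<p<2$ with $M$ geodesically complete, Proposition \ref{lemma_interpolation}(b) instead supplies $\|\grad(u)\|_p\leq (C/\varepsilon)\|u\|_p+C\varepsilon\|\Delta u\|_p$, which can be inserted directly without any absorption step. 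In both cases the final $\mathrm{CZ}(p)$-constants on $M$ depend precisely on the data listed in the statement. I expect the main conceptual obstacle to be not any single estimate, but the need for Proposition \ref{lemma_interpolation}: the gradient term produced by the Leibniz expansions of $\mathrm{Hess}(\phi u)$ and $\Delta(\phi u)$ is unavoidable, and its elimination is exactly what \emph{forces} the split hypothesis on $p$ in part (b).
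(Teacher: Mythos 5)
Your proof of part (b) is correct and coincides with the paper's argument: the same cutoff decomposition $u=\phi u+(1-\phi)u$, the same Leibniz expansion producing the intermediate gradient term, and the same use of Proposition \ref{lemma_interpolation} to eliminate it (for $p\geq 2$ by absorbing the Hessian term, for $1<p<2$ by inserting part b) of the interpolation directly); you are in fact slightly more careful than the paper's write-up, which only cites part a) of the interpolation at the end. For part (a) you take a genuinely different route in the smooth-boundary case. The paper avoids boundary estimates altogether: it embeds $\Omega$ isometrically into the Riemannian double $\mathcal{D}(N)$ of a smooth enlargement $N$, invokes the closed-manifold case of Theorem \ref{centrale1} to get $\mathrm{CZ}(p)$ there, restricts, and then removes the $\Vert u\Vert_p$ term by a compactness--contradiction argument (Corollary \ref{cor_gradestimate} plus Rellich--Kondrachov, weak $\mathsf{H}^{2,p}_0$-convergence to a strong solution of $\Delta u=0$, elliptic regularity and the maximum principle). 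You instead invoke the global boundary $\mathsf{L}^p$-estimates of Agmon--Douglis--Nirenberg for the Dirichlet problem. Both are legitimate; the doubling route buys self-containedness (only interior estimates in harmonic coordinates are ever needed, and the closed-manifold result is reused), while your route is shorter but imports the full boundary-value machinery. One caveat: the ADN theory gives a priori $\Vert u\Vert_{2,p}\leq C(\Vert\Delta u\Vert_p+\Vert u\Vert_p)$, and passing from this to $\Vert u\Vert_{2,p}\leq C\Vert\Delta u\Vert_p$ using triviality of the Dirichlet kernel is not a formal consequence --- it is exactly the compactness--uniqueness argument the paper spells out --- so you should not present it as automatic. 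Your reduction of the general relatively compact case to a smooth enlargement by inclusion of test functions is fine (and in fact yields the stronger inequality there too, with a constant depending on the enlargement).
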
 

In the next Corollary we essentially rephrase part b) of Theorem \ref{relco} in more geometric terms. This formulation involves two geometric objects: (1) the connected sum of Riemannian manifolds, whose construction will be recalled in Appendix \ref{gluing}, and (2) the notion of an \emph{end} $E$ of a complete Riemannian manifold $M$ with respect to a compact domain $\Omega$: $E$ is any of the unbounded connected components of $M\setminus  \overline\Omega$. 

\begin{Corollary}
A complete Riemannian manifold supports $\mathrm{CZ}(p)$, $1<p<\infty$, if and only if each of its ends $E_1,...,E_k$, with respect to any smooth, compact domain $\Omega$, supports the same Calder\'on-Zygmund  inequality. In particular, $\mathrm{CZ}(p)$ holds on the Riemannian connected sum $M = M_1 \# M_2$ of $m$-dimensional complete Riemannian manifolds $M_1$ and $M_2$ if and only if the same inequality holds on both $M_1$ and $M_2$. 
\end{Corollary}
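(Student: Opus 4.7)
The plan is to combine the restriction-invariance of $\mathrm{CZ}(p)$ noted right after Definition \ref{deff} with Theorem \ref{relco} b), which is precisely the compact-perturbation principle that allows one to rebuild $\mathrm{CZ}(p)$ from its restriction to the complement of a compact set.

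For the first equivalence, the ``only if'' direction is immediate: by restriction, every open subset of $M$, and in particular each end $E_i$, inherits $\mathrm{CZ}(p)$ with the same constants. For the converse, I fix a smooth compact domain $\Omega$ and assume each end $E_i$ satisfies $\mathrm{CZ}(p)$ with constants $(C_{1,i}, C_{2,i})$. Enlarging $\Omega$ if necessary, I absorb the (relatively compact) bounded components of $M \setminus \overline\Omega$, which already support $\mathrm{CZ}(p)$ by Theorem \ref{relco} a), so that $M \setminus \overline\Omega = \bigsqcup_{i=1}^k E_i'$ with $E_i' \subset E_i$ still satisfying $\mathrm{CZ}(p)$. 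For $u \in \C^{\infty}_{\Ic}(M\setminus\overline\Omega)$, the decomposition $u = \sum_i u|_{E_i'}$ reduces the task to summing the individual $\mathrm{CZ}(p)$ estimates in the $\ell^p$-sense; using $(a+b)^p \leq 2^{p-1}(a^p+b^p)$ this produces a single $\mathrm{CZ}(p)$ inequality on $M \setminus \overline\Omega$ with constants controlled by $2^{(p-1)/p}\max_i C_{j,i}$. Since $M$ is complete, Theorem \ref{relco} b), whose completeness hypothesis is needed only in the range $1<p<2$, then propagates the inequality to all of $M$.

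For the connected sum statement, I choose $\Omega \subset M_1 \# M_2$ to be a smooth compact neighborhood of the gluing region recalled in Appendix \ref{gluing}. Then $M \setminus \overline\Omega$ consists of exactly two ends, each isometric to $M_i \setminus \overline{B_i}$ for a relatively compact open set $B_i \subset M_i$, $i = 1, 2$. The first equivalence of the corollary, applied to the complete manifold $M$, gives that $\mathrm{CZ}(p)$ on $M$ is equivalent to $\mathrm{CZ}(p)$ on both $M_i \setminus \overline{B_i}$; on the other hand, applying Theorem \ref{relco} b) and restriction to each complete $M_i$ with the relatively compact set $B_i$ shows that $\mathrm{CZ}(p)$ on $M_i \setminus \overline{B_i}$ is equivalent to $\mathrm{CZ}(p)$ on $M_i$. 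Chaining the two equivalences yields the claim.

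The only genuinely technical step is the $\ell^p$-combination of finitely many $\mathrm{CZ}(p)$-inequalities on disjoint open sets, which is an elementary manipulation. All substantive geometric input is packaged in Theorem \ref{relco} b), and beyond correctly setting up the disjoint decomposition of $M \setminus \overline\Omega$ into its finitely many ends, no further obstacle is anticipated.
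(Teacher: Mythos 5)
Your proposal is correct and follows the route the paper intends: the corollary is presented there as a direct rephrasing of Theorem \ref{relco} b) combined with the restriction property of $\mathrm{CZ}(p)$, which is exactly your argument (with the welcome extra care of absorbing bounded components of $M\setminus\overline{\Omega}$ and performing the elementary $\ell^p$-combination of the finitely many estimates on disjoint components before invoking Theorem \ref{relco} b)). No gaps.
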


The proof of part a) of Theorem \ref{relco} relies on the validity of the corresponding $\mathrm{CZ}(p)$ on a closed Riemannian manifold. This reduction procedure is obtained by using the Riemannian double of a manifold with boundary; see Appendix \ref{gluing}. See also Remark \ref{rem_otherproof} for a different and somewhat more direct argument. On the other hand, for the proof of part b), we will again need the $\mathsf{L}^p$-interpolation inequality from Proposition \ref{lemma_interpolation} a), which make gluing methods accessible to Calder\'on-Zygmund inequalities at all.

\begin{proof}[Proof of Theorem \ref{relco}]
a) Let $N$ be a relatively compact domain of $M$ such that $\partial N$ is a
smooth hypersurface and $\overline{\Omega}\subset N$.
\textquotedblleft The\textquotedblright\ Riemannian double $\mathcal{D}\left(  N\right)  $ of
$N$ is a compact Riemannian manifold without boundary. Moreover, by its very
construction, it is always possible to assume that $\mathcal{D}\left(
N\right)  $ contains an isometric copy $\Omega_{N}$ of the original domain
$\Omega$; see Appendix \ref{gluing}. We shall see in Theorem \ref{centrale1} below that
every closed manifold supports $\mathrm{CZ}\left(  p\right)  $. In particular,
this applies to $\mathcal{D}\left(  N\right)  $, namely, there exist suitable
constants $C_{1}>0$ and $C_{2}\geq0$, depending on the geometry of
$\mathcal{D}\left(  N\right)  $, such that
\begin{equation}
\left\Vert \mathrm{Hess}\left(  u\right)  \right\Vert _{p}\leq C_{1}\left\Vert
\Delta u\right\Vert _{p}+C_{2}\left\Vert u\right\Vert _{p}, \label{cz-domain}
\end{equation}
for every $u\in\C^{\infty}\left(  \mathcal{D}\left(  N\right)  \right)  $. In
particular, the same inequality holds for every $u\in\C_{\Ic}^{\infty}\left(
\Omega_{N}\right)  $. Since, up to Riemannian isometries, $\Omega_{N}$ is just
the original domain $\Omega$ , we conclude that (\ref{cz-domain}) (with the
same constants) holds on $\Omega$, as required.\\
We now assume that $\Omega$ as above has smooth boundary and is connected. Then, in spirit of the proof of Lemma 9.17 in
\cite{GT} we obtain that there exists a constant $C_{3}>0$ such that
\begin{equation}
\left\Vert u\right\Vert _{p}\leq C_{3}\left\Vert \Delta u\right\Vert
_{p},\label{cz-domain2}
\end{equation}
for every $u\in\C_{\Ic}^{\infty}\left(  \Omega\right)  $. Inserting this
latter into (\ref{cz-domain}) concludes the proof of part a). For the sake of completeness, let us provide a self-contained proof of (\ref{cz-domain2}). By contradiction, suppose that there exists a sequence $\left\{
u_{k}\right\}  \subset\C_{\Ic}^{\infty}\left(  \Omega\right)  $ satisfying
\begin{equation}
\text{(a) }\left\Vert u_{k}\right\Vert _{p}=1\text{,\qquad(b) }\left\Vert
\Delta u_{k}\right\Vert _{p}\rightarrow0.\label{p-norm0}
\end{equation}
Note that, by Corollary \ref{cor_gradestimate}, $\{  u_{k}\}  $ is
bounded in $\mathsf{\mathsf{H}}_{0}^{1,p}\left(  \Omega\right)  $. Therefore,
the Rellich-Kondrachov compactness theorem yields the existence of a
subsequence $\{  u_{k^{\prime}}\}  $ that converges strongly in $\IL^p$ to a function
$u\in \IL^p(\Omega)$.
In fact, $u\in\C^{0}(  \overline{\Omega})  $ if $p>m$. It follows
from (\ref{p-norm0}) (a) that
\begin{equation}
\left\Vert u\right\Vert _{p}=1.\label{p-norm1}
\end{equation}
Now, by (\ref{cz-domain}) and Corollary \ref{cor_gradestimate}, $\{  u_{k^{\prime}}\}  $ is bounded in the reflexive Banach
space $\mathsf{H}_{0}^{2,p}\left(  \Omega\right)  $. Therefore, a subsequence $\{
u_{k^{\prime\prime}}\}  $ converges weakly in $\mathsf{H}_{0}^{2,p}\left(  \Omega\right) $ and the weak limit is $u\in \mathsf{H}_{0}^{2,p}(\Omega)$. In particular, by Remark \ref{sobo}.1, we have that the distributional Laplacian of $u$ is a Borel function $\Delta u \in \IL^p(\Omega)$ and, furthermore, for
every $\varphi\in\C_{\Ic}^{\infty} ( \Omega)  $,
\begin{align*}
\int_{\Omega}\varphi \Delta u_{k^{\prime\prime}} \Id\mu &= -\int_{\Omega}\mathrm{tr \circ Hess}(u_{k^{\prime\prime}}) \cdot\varphi \ \Id  \mu \\
&= \int_{\Omega}\left(-\mathrm{Hess}(u_{k^{\prime\prime}}),\mathrm{tr}^{\dagger}(\varphi)\right)\Id \mu\\
&\to\int_{\Omega} \left(-\mathrm{Hess}(u),\mathrm{tr}^{\dagger}(\varphi)\right)\Id \mu\\
&=-\int_{\Omega}\mathrm{tr \circ Hess}(u) \cdot\varphi \ \Id \mu\\
&= \int_{\Omega} \varphi \Delta u \ \Id \mu.
\end{align*}
On the other hand, by (\ref{p-norm0}) (b),
\[
\int_{\Omega}\varphi \Delta u_{k^{\prime\prime}} \Id\mu \to 0,
\]
thus proving that $u\in \mathsf{H}_{0}^{2,p}\left(  \Omega\right)  $ is a strong solution of the Laplace equation:
\[
\Delta u=0 \text{ a.e. in } \Omega.
\] 
By elliptic regularity, Theorem 9.19 in \cite{GT}, since the Laplace-Beltrami operator is uniformly elliptic with smooth coefficients in $\overline{\Omega}$, we deduce that $u \in \C^{\infty}(\overline{\Omega})$ and $u=0$ on $\partial \Omega$. The usual maximum principle then implies that $u=0$. Obviously, this contradicts (\ref{p-norm1}).\\
b) Let $u\in\C^{\infty}_{\Ic}(M)$. Take an open subset $\Omega_1\subset M$ such that $\Omega\subset \subset \Omega_1\subset \subset M$ and a function $\xi\in\mathsf{C}^{\infty}_{\Ic}(\Omega_1)$ with $0\leq \xi\leq 1$, and $\xi=1$ on $\Omega$, which we are going to use in oder to glue two $\mathrm{CZ}(p)$\rq{}s together. To this end, set $\phi:=(1-\xi)$. By the validity of $\mathrm{CZ}(p)$ on $\Omega_1$ (by part a)) and on $M\setminus \overline{\Omega}$, and writing $u=\xi u+ \phi u$, with $\xi u\in\C^{\infty}_{\Ic}(\Omega_1)$, $\phi  u\in\C^{\infty}_{\Ic}(M\setminus \overline{\Omega})$, we get $A,B>0$, independent of $u$, such that
\begin{align*}
&\left\|\mathrm{Hess}(u)\right\|_p\leq \left\|\mathrm{Hess}(\xi u)\right\|_p + \left\|\mathrm{Hess}(\phi u)\right\|_p\\
&\leq A\left(\left\|\Delta(\xi u)\right\|_p+\left\|\xi u\right\|_p\right)+ B\left(\left\|\Delta(\phi  u)\right\|_p+\left\|\phi u\right\|_p\right)\\
&\leq A\left(\left\|u\Delta\xi \right\|_p+\left\|\xi\Delta u\right\|_p+\left\||\grad(\xi)|  \cdot|\grad(u)|\right\|_p+\left\|\xi u\right\|_p\right)\\
&\>\>\>+B\left(\left\|u\Delta\phi \right\|_p+\left\|\phi\Delta u\right\|_p+\left\||\grad(\phi)| \cdot|\grad(u)|\right\|_p+\left\|\phi u\right\|_p\right)\\
&\leq C \left(\left\|\Delta u\right\|_p+\left\|\grad(u)\right\|_p+\left\| u\right\|_p\right),
\end{align*}
with
\begin{align*}
0<C:=&A\max\big(\left\Vert  \Delta \xi\right\Vert
_{\infty} ,\left\Vert    \xi\right\Vert
_{\infty},\left\Vert  \grad(\xi)\right\Vert
_{\infty}\big)\\
&+B\max\big(\left\Vert  \Delta \phi\right\Vert
_{\infty} , \left\Vert   \phi\right\Vert
_{\infty}, \left\Vert  \grad(\phi)\right\Vert
_{\infty}\big).
\end{align*}
Interpolating with Proposition \ref{lemma_interpolation} a), the latter inequality completes the proof.
\end{proof}

We have seen that $\mathrm{CZ}(p)$ always holds on relatively compact domains, however, in general one may have a rough control on the constants. We close this section with the following Theorem \ref{local} where we prove a much more precise $\mathrm{CZ}(p)$ on sufficiently small geodesic balls. To this end recall the definition of $r_{Q,k,\alpha}(x)$, the $\mathsf{C}^{k,\alpha}$-harmonic radius with accuracy $Q$ at $x$ (cf. Appendix \ref{harm}).

\begin{Theorem}\label{local} Fix an arbitrary $x\in M$. Then for all $1< p<\infty$, all 
\begin{align}
0<r<r_{2,1,1/2}(x)/2,\label{rad}
\end{align}
and all real numbers $D$ with
$$
r_{2,1,1/2}\big(\mathrm{B}_{r_{2,1,1/2}}(x)\big)=\inf_{z\in \mathrm{B}_{r_{2,1,1/2}(x)}(x) }r_{2,1,1/2}(z)\geq D>0, 
$$
there is a constant $C=C(r,p,m,D)>0$, such that for all $u\in\C^{\infty}_{\Ic}(M)$ one has
\begin{align}\nn
&\left\Vert 1_{\mathrm{B}_{r/2}(x)}\mathrm{Hess}\left(  u\right)  \right\Vert _{p}\\
&\leq  C\left(\left\Vert 1_{\mathrm{B}_{2r}(x)} u\right\Vert _{p}+\left\Vert 1_{\mathrm{B}_{2r}(x)} \Delta u\right\Vert
_{p}+\left\Vert 1_{\mathrm{B}_{2r}(x)} \grad( u)\right\Vert
_{p}\right).\label{loci}
\end{align}
In particular, with some $\tilde{C}=\tilde{C}(r,p,m,D)>0$, for all $u\in\C^{\infty}_{\Ic}(\mathrm{B}_{r/2}(x))$ one has
\begin{align}
\left\Vert \mathrm{Hess}\left(  u\right)  \right\Vert _{p}\leq
 C(\left\Vert u\right\Vert _{p}+\left\Vert \Delta u\right\Vert
_{p}).\label{loci2}
\end{align}
\end{Theorem}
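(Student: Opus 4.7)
The idea is to transport the problem into a harmonic chart at (or near) $x$ and invoke the classical Euclidean $L^p$ Calder\'on-Zygmund estimate for a uniformly elliptic operator with H\"older-continuous coefficients. The decisive feature of harmonic coordinates $y=(y^1,\dots,y^m)$ is that $\Delta_g y^k=0$ for each $k$, so the Laplace-Beltrami operator has no first-order contribution:
\[
\Delta u = -g^{ij}\partial_i\partial_j u.
\]
By the very definition of the harmonic radius $r_{2,1,1/2}$ (Appendix \ref{harm}), in such a chart the coefficients $g_{ij}$ are uniformly close to $\delta_{ij}$, the inverse matrix $g^{ij}$ is uniformly elliptic, and both are $C^{1,1/2}$-bounded with constants depending only on $m$ and the harmonic-radius bound. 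In particular, the Christoffel symbols $\Gamma^{k}_{ij}$ are uniformly $L^{\infty}$-bounded and the Riemannian volume element is comparable to Lebesgue measure.

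Since $2r<r_{2,1,1/2}(x)$, the ball $\mathrm{B}_{2r}(x)$ sits inside a single harmonic chart at $x$. Using the uniform lower bound $D$ on $r_{2,1,1/2}(\cdot)$ over $\mathrm{B}_{r_{2,1,1/2}(x)}(x)$, we may moreover cover $\overline{\mathrm{B}_{r/2}(x)}$ by a controlled number of harmonic charts (the cardinality depending only on $r/D$ and $m$) centered at points $x_\alpha\in \mathrm{B}_{r/2}(x)$, each of Euclidean size at least a definite fraction of $D$. In the $\alpha$-th chart, the equation $\Delta u=f$ reads $-g^{ij}(y)\partial_i\partial_j u=f$, a uniformly elliptic equation with $C^{0,1/2}$-coefficients whose bounds are independent of $\alpha$. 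The standard interior $L^p$-estimate for such operators (e.g.\ Theorem 9.11 in \cite{GT}), applied to a pair of concentric Euclidean balls $B'_\alpha \subset B''_\alpha$ chosen so that $\bigcup_\alpha B'_\alpha$ covers $\mathrm{B}_{r/2}(x)$ while $\bigcup_\alpha B''_\alpha \subset \mathrm{B}_{2r}(x)$, yields
\[
\|\partial^2 u\|_{L^p(B'_\alpha)} \le K\bigl(\|g^{ij}\partial_i\partial_j u\|_{L^p(B''_\alpha)} + \|u\|_{L^p(B''_\alpha)}\bigr),
\]
with $K=K(r,p,m,D)$.

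To pass from Euclidean second partials to the Riemannian Hessian, I would use the identity
\[
\mathrm{Hess}(u)_{ij}=\partial_i\partial_j u - \Gamma^{k}_{ij}\partial_k u
\]
together with the uniform comparability between $|\mathrm{Hess}(u)|_g$ and the Euclidean norm of $(\partial^2 u)$, and between $|\grad(u)|_g$ and the Euclidean gradient. Summing the chart estimates against a bounded-overlap partition of unity subordinate to the cover, and replacing $g^{ij}\partial_i\partial_j u$ by $-\Delta u$, produces exactly (\ref{loci}). For the second assertion, if $u\in \C^{\infty}_{\Ic}(\mathrm{B}_{r/2}(x))$ the cutoffs $1_{\mathrm{B}_{r/2}(x)}$ and $1_{\mathrm{B}_{2r}(x)}$ become trivial and (\ref{loci}) simplifies to
\[
\|\mathrm{Hess}(u)\|_p \le C\bigl(\|u\|_p+\|\Delta u\|_p+\|\grad(u)\|_p\bigr).
\]
The unwanted gradient term is then absorbed by invoking the interpolation inequality of Proposition \ref{lemma_interpolation} (part (a) when $p\ge 2$, and part (b), applicable because we work on the relatively compact $\mathrm{B}_{r/2}(x)$, when $1<p\le 2$) with a sufficiently small $\varepsilon$ and combining with (\ref{loci}) to hide the Hessian contribution on the left.

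The main obstacle is pinpointing a quantitative Euclidean Calder\'on-Zygmund estimate whose constants depend explicitly only on the ellipticity constant, the $C^{0,1/2}$-norm of the coefficients, the radii, $p$ and $m$, and then carefully tracking how the harmonic-radius scaling governs those quantities in terms of $D$ and $r$; the remaining steps (Christoffel-symbol control, change of measure, partition of unity, and the interpolation absorption) are then essentially bookkeeping.
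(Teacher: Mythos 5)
Your proposal follows essentially the same route as the paper's proof: a single $\mathsf{C}^{1,1/2}$-harmonic chart of accuracy $Q=2$ centered at $x$ (in which $\Delta u=-g^{ij}\partial_i\partial_j u$), the interior $\mathsf{L}^p$ estimate of Theorem 9.11 in \cite{GT} on nested Euclidean balls sandwiched between $\phi(\mathrm{B}_{r/2}(x))$ and $\phi(\mathrm{B}_{2r}(x))$, the Christoffel-symbol comparison between $\partial_i\partial_j u$ and $\mathrm{Hess}(u)$, and interpolation via Proposition \ref{lemma_interpolation} to dispose of the gradient term in (\ref{loci2}). Your explicit case split between parts a) and b) of that proposition for $p\geq 2$ versus $1<p\leq 2$ is, if anything, slightly more careful than the paper's citation of part a) alone, and the multi-chart covering you sketch is unnecessary since, as you yourself note, $\mathrm{B}_{2r}(x)$ already lies in one chart.
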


\begin{proof} Let $u\in\C^{\infty}_{\Ic}(M)$, let $r^*(x):=r_{2,1,1/2}(x)$, and pick a $\mathsf{C}^{1,1/2}$-harmonic coordinate system 
$$
\phi=(y^1,\dots,y^m):\mathrm{B}_{r^*(x)}(x)\longrightarrow  \IR^m
$$
with accuracy $Q=2$. Then by the properties (\ref{hari1}) and (\ref{hari2}) of $\phi$ and by Remark \ref{rem_inverse} we have the following inequalities on $\mathrm{B}_{r^*(x)}(x)$,
\begin{align}\label{ede1}
&2^{-1}(\delta_{ij})\leq (g_{ij})\leq 2 (\delta_{ij}),\\
\label{ede2}&\max_{i,j\in\{1,\dots,m\}}\{\left\Vert g_{ij}\right\Vert_{\infty},\left\Vert g^{ij}\right\Vert_{\infty}\}\leq C_1(m),\\
\label{ede3}&\max_{i,j,l\in\{1,\dots,m\}}\{ \left\Vert \partial_lg_{ij}\right\Vert _{\infty},\left\Vert \partial_lg^{ij}\right\Vert _{\infty}\} \leq C_2(D,m).
\end{align}
Since by (\ref{ede1}) and $(\ref{rad})$ we have 
$$
\phi(\mathrm{B}_{r/2}(x))\subset\mathrm{B}^{\mathrm{eucl}}_{r/\sqrt{2}}(0)\subset  \mathrm{B}^{\mathrm{eucl}}_{\sqrt{2}r}(0)\subset \phi(\mathrm{B}_{2r}(x))\subset \phi(\mathrm{B}_{r^*(x)}(x)),
$$
applying Theorem 9.11 from \cite{GT} with $L=\Delta$ and with the Euclidean balls $\Omega:=\mathrm{B}^{\mathrm{eucl}}_{\sqrt{2}r}(0)$, $\Omega\rq{}:=\mathrm{B}^{\mathrm{eucl}}_{r/\sqrt{2}}(0)$) implies the existence of a $C_3=C_3(p,r,m,D)>0$ such that
\begin{align}\label{elli}
\int_{\mathrm{B}_{r/2}(x)}\sum_{i,j}|\partial_i \partial_j u(y)|^p \Id y \leq &\>\>C_3  \int_{\mathrm{B}_{2r}(x)}|\Delta u(y)|^p\Id y+C_3  \int_{\mathrm{B}_{2r(x)}} |u(y)|^p \Id y.
\end{align}
One can deduce from (\ref{ede1})-(\ref{ede3}) the following pointwise estimate in $\mathrm{B}_{r^*(x)}(x)$,
\begin{align}\label{pointwise-hess-estimate}
&|\mathrm{Hess}(u)|^p\leq C_8 
\sum_{i,j} |\partial_i\partial_j u|^p+C_5 |\mathrm{grad}(u)|^p,
\end{align}
with some $C_8=C_8(r,m,D,p)>0$.
Indeed,  let $\left\Vert A\right\Vert^2 _{\mathrm{HS}}=\sum_{ij} A_{ij}^2$ denote the Hilbert-Schmidt norm of a real-valued matrix $A=(A_{ij})$. Then with $H:=(\mathrm{Hess}\left(  u\right)_{ij})$,
$h:=(\partial_{i}\partial_{j}u)$, $\Gamma:=(-\sum_l\Gamma_{ij}^{l}\partial_{l}u)$,
$G:=(g_{ij})$ one has $H=h+\Gamma$ and 
\begin{align*}
&|\mathrm{Hess}(u)|=\left\Vert G^{-1}
H\right\Vert _{\mathrm{HS}}=\left\Vert G^{-1}h+G^{-1}\Gamma\right\Vert _{\mathrm{HS}}\\
&\leq\left\Vert G^{-1}\right\Vert _{\mathrm{HS}}\left(  \left\Vert h\right\Vert
_{\mathrm{HS}}+\left\Vert \Gamma\right\Vert _{\mathrm{HS}}\right) \leq C_4 \left( \left\Vert h\right\Vert _{\mathrm{HS}}+\left\Vert
\Gamma\right\Vert _{\mathrm{HS}}\right)  \>\text{ in $\mathrm{B}_{r^*(x)}(x)$}
\end{align*}
and
\[
\left\Vert \Gamma\right\Vert _{\mathrm{HS}}^{2}\leq\sum_{ij}\big(\sum_{l}\Gamma_{ij}
^{l}\big)^{2}\cdot\sum_{l}\left(  \partial_{l} u\right)  ^{2}\leq C_5  \left\vert \grad(
u)\right\vert^{2}\>\text{ in $\mathrm{B}_{r^*(x)}(x)$},
\]
for some $C_4,C_5>0$ depending only on $D$ and $m$. Whence, we get the estimate on $\mathrm{B}_{r^*(x)}(x)$
\begin{align*}
& |\mathrm{Hess}(u)|\leq C_6\sqrt{\sum_{i,j} |\partial_i\partial_j u|^2}+C_6|\mathrm{grad}(u)|,\\
&|\mathrm{Hess}(u)|^p\leq C_7 \sum_{i,j} |\partial_i\partial_j u|^p+C_7|\mathrm{grad}(u)|^p
\end{align*}
for some $C_6,C_7>0$ depending only on $D$, $m$ and $p$. This proves the validity of (\ref{pointwise-hess-estimate}).
Using this latter in combination with (\ref{ede1}) and (\ref{elli}) gives us a $C_9=C_9(r,m,D,p)>0$ such that 
\begin{align*}
&\left\Vert 1_{\mathrm{B}_{r/2}(x)} \mathrm{Hess}\left(  u\right)  \right\Vert _{p}\\
&\leq C_9\left(\left\Vert 1_{\mathrm{B}_{2r}(x)}  u\right\Vert _{p}+\left\Vert 1_{\mathrm{B}_{2r}(x)} \Delta u\right\Vert
_{p}+\left\Vert 1_{\mathrm{B}_{2r}(x)}  \mathrm{grad}(u)\right\Vert
_{p}\right).
\end{align*}
Finally, (\ref{loci2}) follows from by interpolation using Proposition \ref{lemma_interpolation} a).  
\end{proof}

Let us remark here that an essential point of the estimate from Theorem \ref{local} is that $C$ depends on $x$ only through a lower bound $D$ on the local harmonic radius, a fact which makes it possibly to use this result in order to derive $\mathrm{CZ}(p)$ on a large class of noncompact Riemannian manifolds (cf. the proof of Theorem \ref{centrale1}).

\section{Geometric criteria for global Calder\'on-Zygmund inequalities}\label{gross}

This section is devoted to Riemann geometric criteria for the validity of global $\mathrm{CZ}(p)$ inequalities.\\
Let us start with the $p=2$ case: Here, in view of Bochner\rq{}s equality, it is easy to give a rather complete answer: $\mathrm{CZ}(2)$ always holds globally in a strong, \lq\lq{}infinitesimial\rq\rq{} way, under a global lower bound on the Ricci curvature, and furthermore this result does not even require geodesic completeness:

\begin{Proposition} \label{Ricbelow} Assume that $\mathrm{Ric}\geq -C^2$ for some constant $C\in\IR$, meaning as usual that 
$$
\mathrm{Ric}(X,X)\geq -C^2 |X|^2\text{ for all vector fields $X\in\Gamma_{\C^{\infty}}(M,\T M)$.}
$$
Then $\mathrm{CZ}(2)$ holds in the following \lq\lq{}infinitesimal\rq\rq{} way: For every $\varepsilon>0$ and every $u\in\C^{\infty}_{\Ic}(M)$ one has
\[
\left\Vert \mathrm{Hess}\left(  u\right)  \right\Vert _{2}^{2}\leq
\frac{C\varepsilon^{2}}{2}\left\Vert u\right\Vert _{2}^{2}+\left(
1+\frac{C^{2}}{2\varepsilon^{2}}\right)  \left\Vert \Delta u\right\Vert
_{2}^{2}.
\]
\end{Proposition}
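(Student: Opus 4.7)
The plan is to use Bochner's identity for functions and then integrate by parts, exploiting the compact support assumption. Since the sign convention adopted in the paper makes $\Delta = -\mathrm{div}\circ\grad$ nonnegative, Bochner's formula for a smooth function $u$ reads
\[
-\tfrac{1}{2}\Delta\bigl(|\grad(u)|^{2}\bigr)=|\mathrm{Hess}(u)|^{2}-(\grad(u),\grad(\Delta u))+\mathrm{Ric}(\grad(u),\grad(u)).
\]
Since $u\in\C^{\infty}_{\Ic}(M)$, the function $|\grad(u)|^{2}$ is also compactly supported, so $\int\Delta(|\grad(u)|^{2})\z=0$. Moreover, integration by parts gives $\int(\grad(u),\grad(\Delta u))\z=\int|\Delta u|^{2}\z=\|\Delta u\|_{2}^{2}$. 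Integrating Bochner and rearranging therefore yields the identity
\[
\|\mathrm{Hess}(u)\|_{2}^{2}=\|\Delta u\|_{2}^{2}-\int\mathrm{Ric}(\grad(u),\grad(u))\z.
\]

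Next, I would plug in the Ricci lower bound $\mathrm{Ric}\geq-C^{2}$ to get
\[
\|\mathrm{Hess}(u)\|_{2}^{2}\leq\|\Delta u\|_{2}^{2}+C^{2}\|\grad(u)\|_{2}^{2}.
\]
To control $\|\grad(u)\|_{2}^{2}$, I would apply integration by parts a second time, $\|\grad(u)\|_{2}^{2}=\int u\,\Delta u\,\z$, followed by the Cauchy--Schwarz inequality to obtain $\|\grad(u)\|_{2}^{2}\leq\|u\|_{2}\|\Delta u\|_{2}$. Finally, Young's inequality in the form $ab\leq\tfrac{\varepsilon^{2}}{2}a^{2}+\tfrac{1}{2\varepsilon^{2}}b^{2}$ with $a=\|u\|_{2}$, $b=\|\Delta u\|_{2}$ turns this into
\[
C^{2}\|\grad(u)\|_{2}^{2}\leq \tfrac{C^{2}\varepsilon^{2}}{2}\|u\|_{2}^{2}+\tfrac{C^{2}}{2\varepsilon^{2}}\|\Delta u\|_{2}^{2},
\]
which combined with the previous inequality produces the claimed estimate.

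There is essentially no obstacle beyond bookkeeping: the compact-support hypothesis takes care of all boundary terms, no completeness assumption enters because one never needs density of $\C^{\infty}_{\Ic}(M)$ in a Sobolev space, and the ``infinitesimal'' form of $\mathrm{CZ}(2)$ follows by tuning $\varepsilon$. The only point to keep an eye on is the sign convention: with the paper's Laplacian being the \emph{positive} operator, the Bochner identity picks up the signs displayed above so that the $\mathrm{Ric}$ term appears on the correct side and produces the required absorption after integration.
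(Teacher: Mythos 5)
Your proposal is correct and follows exactly the paper's argument: integrate Bochner's identity, kill the $\Delta|\grad(u)|^{2}$ term by compact support, integrate by parts to turn $(\Id u,\Id\Delta u)$ into $\|\Delta u\|_{2}^{2}$ and $\|\grad(u)\|_{2}^{2}$ into $\int u\,\Delta u$, then apply Cauchy--Schwarz and Young. Your bookkeeping yields $\tfrac{C^{2}\varepsilon^{2}}{2}\|u\|_{2}^{2}$ rather than the paper's $\tfrac{C\varepsilon^{2}}{2}\|u\|_{2}^{2}$ in the first term; this is the natural outcome of the computation (the paper's exponent appears to be a typo), and since $\varepsilon>0$ is arbitrary the two families of inequalities are equivalent.
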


\begin{proof} By Bochner\rq{}s equality we have
\begin{align*}
&|\mathrm{Hess}(u)|^2\\
&=-\f{1}{2}\Delta |\mathrm{grad}(u)|^2+(\mathrm{grad}(u),\mathrm{grad}(\Delta u)) - \mathrm{Ric}(\mathrm{grad}(u),\mathrm{grad}(u))\\
&=-\f{1}{2}\Delta |\mathrm{grad}(u)|^2+(\Id u,\Id \Delta u) - \mathrm{Ric}(\mathrm{grad}(u),\mathrm{grad}(u)).
\end{align*}
Now the claim follows easily from integrating this identity, using integration by parts, $\Delta u=\Id^{\dagger}\Id u$ and the inequality
$$
ab \leq \f{a^2}{2\varepsilon^2}+\f{\varepsilon^2 b^2}{2},
$$
valid for $a,b\geq 0$.
\end{proof}

On the other hand, it is necessary for $\mathrm{CZ}(2)$ to have \emph{some} control on the curvature, as can be seen from:

\begin{Theorem}\label{counter} There exists a $2$-dimensional, geodesically complete Riemannian manifold $N$ with unbounded Gaussian curvature and such that $\mathrm{CZ}(2)$ fails on $N$.
\end{Theorem}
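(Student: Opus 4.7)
The plan is to realise $N$ as a rotationally symmetric surface of revolution, i.e.\ $N=[0,\infty)\times S^{1}$ equipped with a warped metric $\Id r^{2}+\psi(r)^{2}\Id\theta^{2}$, where the smooth function $\psi$ satisfies the standard smoothness-at-origin conditions ($\psi(0)=0$, $\psi'(0)=1$, $\psi^{(2k)}(0)=0$ for $k\ge 1$) together with $\psi>0$ on $(0,\infty)$. Such a surface is automatically geodesically complete, and its Gaussian curvature is $K(r)=-\psi''(r)/\psi(r)$. The task is then to choose $\psi$ so that $K$ is unbounded while some compactly supported sequence violates $\mathrm{CZ}(2)$.

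The detection mechanism is the exact $p=2$ Bochner identity that already appeared in the proof of Proposition \ref{Ricbelow}: for every $u\in \C^{\infty}_{\Ic}(N)$,
\begin{equation*}
\left\Vert \mathrm{Hess}(u)\right\Vert _{2}^{2}=\left\Vert \Delta u\right\Vert _{2}^{2}-\int_{N}K\,|\grad(u)|^{2}\z.
\end{equation*}
Specialising to a radial $u=u(r)$, the warped-product formulae $|\grad(u)|^{2}=(u')^{2}$, $\Delta u=-u''-(\psi'/\psi)u'$, $|\mathrm{Hess}(u)|^{2}=(u'')^{2}+(\psi'/\psi)^{2}(u')^{2}$, combined with $\z=\psi(r)\,\Id r\,\Id\theta$, reduce this to the one-dimensional identity
\begin{equation*}
\left\Vert \mathrm{Hess}(u)\right\Vert _{2}^{2}-\left\Vert \Delta u\right\Vert _{2}^{2}=2\pi\int_{0}^{\infty}\psi''(r)\,u'(r)^{2}\,\Id r.
\end{equation*}
Refuting $\mathrm{CZ}(2)$ thus reduces to producing radial bumps $u_{n}\in \C^{\infty}_{\Ic}(N)$ for which $\int_{0}^{\infty}\psi''(u_{n}')^{2}\,\Id r$ asymptotically dominates $\|u_{n}\|_{2}^{2}+\|\Delta u_{n}\|_{2}^{2}$.

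I would then engineer $\psi$ to carry a sequence of disjoint sharp convexity spikes on narrow intervals $I_{n}=[r_{n}-\epsilon_{n},r_{n}+\epsilon_{n}]$, with $r_{n}\to\infty$, $\epsilon_{n}\to 0$, and $\psi''|_{I_{n}}$ of order $M_{n}\to\infty$ (so that $K\to-\infty$ there), while keeping $\psi$ tame in between. The test functions $u_{n}$ are shifted, rescaled copies of a fixed unit bump whose derivative is supported inside $I_{n}$, making all four integrals entering the failure criterion elementary functions of $(r_{n},\epsilon_{n},M_{n})$ and of the bump profile. The hard part is the tuning of these scales: narrowing $u_{n}$ amplifies the $u_{n}''$-contribution to $\Delta u_{n}$ (which grows like $\epsilon_{n}^{-2}$), whereas widening it activates the drift coefficient $\psi'/\psi$ in the Laplacian; and the antiderivative relation $\psi'(r_n+\epsilon_n)-\psi'(r_n-\epsilon_n)=\int_{I_n}\psi''\,\Id r$ forces the growth of $\psi'$ to be absorbed somewhere if $\psi''$ is to stay large and non-negative on $I_n$. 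The construction of Section \ref{model} will have to coordinate the background profile of $\psi$, the heights $M_n$ versus widths $\epsilon_n$ of the spikes, and the scale of the bumps so that the three Sobolev integrals on the right of the failure criterion remain controlled while the Bochner contribution diverges.
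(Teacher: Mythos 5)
Your setting (a rotationally symmetric surface $\Id r^2+\psi^2(r)\,\Id\theta^2$) is exactly the one used in Section \ref{model}, and your Bochner reformulation is correct and even exact: since $\|\mathrm{Hess}(u)\|_2^2-\|\Delta u\|_2^2=2\pi\int_0^\infty\psi''\,(u')^2\,\Id r$ for radial $u$, \emph{any} radial counterexample must make this integral dominate $\|u\|_2^2+\|\Delta u\|_2^2$. The gap is in the step you yourself flag as ``the hard part'': the plan of testing curvature spikes ($\psi''\sim M_n\ge 0$ on $I_n$) against shifted, rescaled copies of a fixed bump gives no control of $\|\Delta u_n\|_2$, and the parameter constraints are mutually obstructive. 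Concretely, integration by parts gives $\int|\grad u_n|^2\z\le\|u_n\|_2\|\Delta u_n\|_2$, and for a bump of height $h_n$ and width $\epsilon_n$ near $r_n$ this forces $\|\Delta u_n\|_2^2\gtrsim h_n^2\,\psi(r_n)\,\epsilon_n^{-3}$; beating this with the Bochner term $\sim M_nh_n^2\epsilon_n^{-1}$ forces $M_n\epsilon_n^2\gg\psi(r_n)$, whence $\psi'$ traverses a range $\gg\psi(r_n)/\epsilon_n$ across $I_n$. Moreover, the pure integration-by-parts identity
\begin{equation*}
\int\psi''(u')^2\,\Id r=-2\int\psi'\,u'u''\,\Id r\le 2\Bigl(\int\tfrac{(\psi')^2}{\psi}(u')^2\,\Id r\Bigr)^{1/2}\Bigl(\int(u'')^2\,\psi\,\Id r\Bigr)^{1/2}
\end{equation*}
shows that a large Bochner term \emph{requires} the tangential Hessian contribution $\int(\psi'/\psi)^2(u')^2\psi\,\Id r$ to be large, i.e.\ $\psi'$ must be large where $u'$ lives; but then the drift term $(\psi'/\psi)u_n'$ contaminates $\Delta u_n$ at the same order as the gain, and it can only be removed by cancellation against $u_n''$, which means $u_n'\propto 1/\psi$ --- not a rescaled fixed bump.

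This is precisely the missing idea in the paper's proof: one composes with the harmonic function $G(r)=\int_1^r\sigma^{1-m}$ of a parabolic model, taking $u_k=\phi_k(G)$, so that the first-order term in $\Delta u_k$ cancels \emph{exactly} and $\Delta u_k=\phi_k''(G)\,\sigma^{-2(m-1)}$ is exponentially small on the annuli $\{k\le G\le k+1\}$. The Hessian blow-up is then driven not by pointwise large $\sigma''$ but by rapid oscillation of $\sigma'$ (slopes of order $1/\varepsilon_k$ on intervals of length $\varepsilon_k$, with $t\le\sigma\le t+1$ kept tame), entering through the tangential term $(\sigma'/\sigma)^2(u_k')^2$; the unbounded Gaussian curvature appears only as a by-product of smoothing the corners of the oscillation. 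So your detection mechanism is fine, but your test functions and your spike design would not close the argument; you need the harmonic reparametrization (equivalently, $u_n'\propto 1/\psi$) and the shift of emphasis from $\psi''$ to $\psi'$.
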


The proof of Theorem \ref{counter} is given in Section \ref{model}.\\

For arbitrary values of $p$, the situation is much more complicated. Here, we found the following two criteria, which can also be considered as the main result of this paper. \\
The first result covers the whole $\mathsf{L}^p$-scale in a great generality:

\begin{Theorem}\label{centrale1} Let $1<p<\infty$ and assume $\left\|\mathrm{Ric}\right\|_{\infty}<\infty$, $r_{\mathrm{inj}}(M)>0$. Then there is a 
$$
C=C(m,p,\left\|\mathrm{Ric}\right\|_{\infty},r_{\mathrm{inj}}(M))>0, 
$$
such that for all $u\in\C^{\infty}_{\Ic}(M)$ one has
\begin{align}
\left\Vert \mathrm{Hess}\left(  u\right)  \right\Vert _{p}\leq
 C(\left\Vert u\right\Vert _{p}+\left\Vert \Delta u\right\Vert
_{p}).\label{loww}
\end{align}
 \end{Theorem}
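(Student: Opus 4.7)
The plan is to reduce this global statement to the local Calder\'on-Zygmund inequality of Theorem \ref{local} by means of a uniform covering argument, and then to absorb the remaining gradient term via the $\mathsf{L}^p$-interpolation inequality of Proposition \ref{lemma_interpolation}.

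First I would invoke the standard Cheeger--Anderson type harmonic radius bound (which, according to the promised Appendix \ref{harm}, should be available): under $\|\mathrm{Ric}\|_{\infty}<\infty$ and $r_{\mathrm{inj}}(M)>0$, there is a constant $D_0=D_0(m,\|\mathrm{Ric}\|_{\infty},r_{\mathrm{inj}}(M))>0$ such that $r_{2,1,1/2}(x)\geq D_0$ for every $x\in M$. Fix then $r:=D_0/8$. With this choice the hypotheses of Theorem \ref{local} hold uniformly at every point of $M$ with the same harmonic-radius lower bound $D_0$, so that there exists a constant $C_1=C_1(m,p,\|\mathrm{Ric}\|_{\infty},r_{\mathrm{inj}}(M))>0$ with
\begin{align*}
\bigl\Vert 1_{\mathrm{B}_{r/2}(x)}\mathrm{Hess}(u)\bigr\Vert_{p}^{p}
\leq C_1\bigl(\bigl\Vert 1_{\mathrm{B}_{2r}(x)} u\bigr\Vert_{p}^{p}
+ \bigl\Vert 1_{\mathrm{B}_{2r}(x)}\Delta u\bigr\Vert_{p}^{p}
+ \bigl\Vert 1_{\mathrm{B}_{2r}(x)}\grad(u)\bigr\Vert_{p}^{p}\bigr)
\end{align*}
for every $x\in M$ and every $u\in\C^{\infty}_{\Ic}(M)$, where I have already collapsed the sum on the right using $(a+b+c)^p\leq 3^{p-1}(a^p+b^p+c^p)$.

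Next I would build a good cover of $M$. Using Zorn/Vitali, pick a maximal $r/2$-separated set $\{x_i\}_{i\in I}\subset M$; then $M=\bigcup_i \mathrm{B}_{r/2}(x_i)$. The assumption $\mathrm{Ric}\geq -C^2$ together with positivity of the injectivity radius yields a two-sided volume comparison at scale $r$ via Bishop--Gromov (lower bound on volumes of balls of radius $r/2$ follows from non-collapsing under $r_{\mathrm{inj}}(M)>0$, upper bound on volumes of balls of radius $4r$ follows from the Ricci lower bound), and a standard packing argument produces a uniform bound $N=N(m,\|\mathrm{Ric}\|_{\infty},r_{\mathrm{inj}}(M))$ on the multiplicity of the enlarged cover $\{\mathrm{B}_{2r}(x_i)\}_{i\in I}$. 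Integrating the local inequality over the cover and exchanging sum and integral via this bounded overlap gives
\begin{align*}
\bigl\Vert \mathrm{Hess}(u)\bigr\Vert_{p}^{p}
\leq \sum_{i\in I}\bigl\Vert 1_{\mathrm{B}_{r/2}(x_i)}\mathrm{Hess}(u)\bigr\Vert_{p}^{p}
\leq N C_1\bigl(\|u\|_{p}^{p}+\|\Delta u\|_{p}^{p}+\|\grad(u)\|_{p}^{p}\bigr).
\end{align*}

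Finally, the gradient term has to be removed. For $p\geq 2$, Proposition \ref{lemma_interpolation}(a) gives $\|\grad(u)\|_p\leq (C/\varepsilon)\|u\|_p+C\varepsilon\|\mathrm{Hess}(u)\|_p$; choosing $\varepsilon$ small enough the Hessian term on the right may be absorbed into the left hand side of the displayed inequality, producing the desired $\mathrm{CZ}(p)$. For $1<p<2$, geodesic completeness is automatic from the hypotheses, so Proposition \ref{lemma_interpolation}(b) applies and gives $\|\grad(u)\|_p\leq (C/\varepsilon)\|u\|_p+C\varepsilon\|\Delta u\|_p$, and we may simply substitute this into the displayed inequality, with no absorption needed. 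Tracking dependencies at each step, the final constant depends only on $m$, $p$, $\|\mathrm{Ric}\|_{\infty}$ and $r_{\mathrm{inj}}(M)$, as claimed.

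The main obstacles in this plan are conceptual rather than technical: (i) the input that bounded Ricci plus positive injectivity radius implies a uniform positive lower bound on $r_{2,1,1/2}$ (this is where the injectivity radius assumption is actually used, and the full proof would require either citing Anderson's harmonic coordinate theorem or working through the appendix), and (ii) the volume-comparison based construction of a cover with uniformly bounded overlap, which requires both the Ricci lower bound (upper volume bound on large balls) and non-collapsing from the injectivity radius (lower volume bound on small balls). Once these two ingredients are in hand, the passage from local to global $\mathrm{CZ}(p)$ is almost purely formal.
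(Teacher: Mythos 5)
Your proposal is correct and follows essentially the same route as the paper: a uniform lower bound on the $\mathsf{C}^{1,1/2}$-harmonic radius (Theorem \ref{univ}), a uniformly locally finite cover by balls of comparable radius (the paper cites Lemma \ref{eses}, which is proved by exactly the maximal-separated-set and volume-comparison argument you sketch), summation of the local estimate of Theorem \ref{local}, and interpolation to remove the gradient term. If anything, your explicit case split in the last step --- Proposition \ref{lemma_interpolation}(a) with absorption for $p\geq 2$ and Proposition \ref{lemma_interpolation}(b) for $1<p<2$ (where geodesic completeness follows from $r_{\mathrm{inj}}(M)>0$) --- is slightly more careful than the paper's bare reference to part (a).
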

 
\begin{Remark} Note that, using the usual definition of geodesic completness in terms of the exponential function, it is elementary to see that a positive injectivity radius automatically implies geodesically completeness.
\end{Remark} 

The second result is concerned with the $1<p\leq 2$ case in a slightly different setting: the geometry of the manifold is bounded up to order one but the injectivity radius condition is replaced with a kind of generalized volume doubling assumption. The proof of this result is of independent interest because it points out a deep relation between Calder\'on-Zygmund inequalities and covariant Riesz transforms:

\begin{Theorem}\label{centrale2} Let $1<p\leq 2$. Assume that $M$ is geodesically complete with $\left\|\mathrm{R}\right\|_{\infty}<\infty$, $\left\|\nabla\mathrm{R}\right\|_{\infty}<\infty$, and that there are constants $D\geq 1$, $0\leq\delta<2$ with 
\begin{align}
\mu(\mathrm{B}_{tr}(x))\leq D t^{D}\mathrm{e}^{t^{\delta
}+r^{\delta}}\mu(\mathrm{B}_r(x))\>\text{ for all $x\in M$, $r>0$, $t\geq1$.}\label{B2}
\end{align}
Then there is a 
$$
C=C(m,p,\left\|\mathrm{R}\right\|_{\infty},\left\|\nabla \mathrm{R}\right\|_{\infty},D,\delta)>0,
$$
such that for all $u\in\C^{\infty}_{\Ic}(M)$ one has
\begin{align}
\left\Vert \mathrm{Hess}\left(  u\right)  \right\Vert _{p}\leq
 C(\left\Vert u\right\Vert _{p}+\left\Vert \Delta u\right\Vert
_{p} ).\label{loww2}
\end{align}
 \end{Theorem}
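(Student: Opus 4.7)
The plan is to express the operator $u \mapsto \mathrm{Hess}(u)$ as a composition of Riesz-type operators applied to $(\Delta+1)u$, and then invoke the deep $L^p$-boundedness theorems for each factor. The restriction $1<p\leq 2$ comes from the current state-of-the-art for covariant Riesz transform bounds on noncompact manifolds.

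\textbf{Step 1: Intertwining and factorization.} For $u\in\C^{\infty}_{\Ic}(M)$ one has $\mathrm{Hess}(u) = \nabla \Id u$, and by commuting exterior and Bochner derivatives, the Hodge Laplacian on $1$-forms satisfies $\Delta_1 \Id u = \Id \Delta u$. Via the Friedrichs functional calculus of the (essentially self-adjoint) non-negative operators $\Delta$ and $\Delta_1$ on $\mathsf{L}^2$, this intertwining persists for bounded Borel functions of the Laplacians, so in particular
$$
\Id (\Delta+1)^{-1/2} = (\Delta_1+1)^{-1/2}\,\Id,
$$
on the appropriate dense domain. Consequently, the identity
$$
\nabla \Id u \;=\; \bigl[\nabla (\Delta_1+1)^{-1/2}\bigr]\circ\bigl[\Id(\Delta+1)^{-1/2}\bigr]\circ (\Delta+1)u
$$
holds for $u\in\C^{\infty}_{\Ic}(M)$, after appropriate approximation.

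\textbf{Step 2: $L^p$-boundedness of the two Riesz factors.} Under the hypotheses of the theorem the two operators in the factorization are $\mathsf{L}^p$-bounded for $1<p\leq 2$:
\begin{itemize}
\item The scalar Riesz transform $\Id(\Delta+1)^{-1/2}:\mathsf{L}^p(M)\to \Gamma_{\mathsf{L}^p}(M,\mathrm{T}^*M)$ is bounded for $1<p\leq 2$ on any geodesically complete manifold with $\mathrm{Ric}$ bounded from below; this is the classical Bakry bound, which applies here because $\|\mathrm{R}\|_\infty<\infty$ controls the Ricci tensor.
\item The \emph{covariant} Riesz transform $\nabla(\Delta_1+1)^{-1/2}:\Gamma_{\mathsf{L}^p}(M,\mathrm{T}^*M)\to \Gamma_{\mathsf{L}^p}(M,\mathrm{T}^{0,2}M)$ is bounded for $1<p\leq 2$ by the Thalmaier--Wang theorem in \cite{thalmaier}; this is where the assumptions $\|\mathrm{R}\|_\infty<\infty$, $\|\nabla\mathrm{R}\|_\infty<\infty$ and the generalized doubling (\ref{B2}) are all used, the latter being the substitute for a positive injectivity-radius bound in their probabilistic derivative formula for the heat semigroup on forms.
\end{itemize}

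\textbf{Step 3: Conclusion.} Let $C_1,C_2$ be the operator norms from Step 2. Applying them in turn to the factorization of Step 1 and using the trivial estimate $\|(\Delta+1)u\|_p\leq \|\Delta u\|_p+\|u\|_p$ yields
$$
\|\mathrm{Hess}(u)\|_p \;\leq\; C_1 C_2\bigl(\|\Delta u\|_p+\|u\|_p\bigr)
$$
for every $u\in \C^{\infty}_{\Ic}(M)$, which is $\mathrm{CZ}(p)$ with the claimed dependence of constants.

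\textbf{Expected obstacles.} The conceptual core of the proof is the factorization, which is clean; the technical core is entirely concentrated in the two Riesz bounds. The hard import is the \emph{covariant} one, since it is the only place where the generalized doubling (\ref{B2}) and the first-order curvature bounds are used in an essential way, via the stochastic intertwining formulas $\nabla \mathrm{e}^{-t\Delta_1}\alpha = \mathbb{E}[\cdots]$ from Thalmaier--Wang that replace the missing harmonic-coordinate arguments. A secondary technical point is to justify rigorously the commutation identity and the factorization at the level of $\C^{\infty}_{\Ic}(M)$, rather than on abstract spectral domains; this is routine but requires a density/approximation argument, for instance via Hessian cut-off functions (which exist by Proposition \ref{laplacian} b), since $\|\mathrm{R}\|_\infty<\infty$) together with the $\mathsf{H}^{2,p}_0=\mathsf{H}^{2,p}$ denseness of Corollary \ref{dssss}.
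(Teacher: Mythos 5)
Your proposal follows essentially the same route as the paper: factor $\mathrm{Hess}=\nabla\,\Id$ through the commutation $\Id\Delta=\Delta_{1}\Id$, bound the scalar Riesz transform by Bakry's theorem, and bound the covariant Riesz transform $\nabla(\Delta_{1}+\sigma)^{-1/2}$ on $1$-forms by Thalmaier--Wang, whose hypotheses (bounded $\mathrm{R}$, $\nabla\mathrm{R}$, and the generalized doubling (\ref{B2}), plus Li--Yau heat kernel bounds and Laplacian comparison) are exactly what the curvature and volume assumptions supply. The one point to repair is your choice of spectral shift: with the shift $+1$, neither Bakry's bound nor the requisite $\mathsf{L}^{2}$-boundedness of $\nabla(\Delta_{1}+1)^{-1/2}$ is guaranteed, because by the Weitzenb\"ock formula $\Delta_{1}=\nabla^{\dagger}\nabla+\mathrm{Ric}(\sharp,\sharp)$ the zeroth-order term can be more negative than $-1$, so $\Delta_{1}+1$ need not dominate $\nabla^{\dagger}\nabla$. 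You should instead shift by $\sigma=c+1$, where $-c\leq\mathrm{Ric}$ is the lower bound forced by $\left\|\mathrm{R}\right\|_{\infty}<\infty$; with this choice Bakry's theorem applies directly and an elementary operator-theoretic lemma gives $\left\|\nabla(\Delta_{1}+\sigma)^{-1/2}\right\|_{2,2}\leq 1$, which is the $\mathsf{L}^{2}$ input needed for the Thalmaier--Wang theorem. The final estimate then reads $\left\|\mathrm{Hess}(u)\right\|_{p}\leq C_{1}C_{2}\left\|(\Delta+\sigma)u\right\|_{p}\leq C_{1}C_{2}\max(1,\sigma)\left(\left\|\Delta u\right\|_{p}+\left\|u\right\|_{p}\right)$, with the claimed dependence of the constants.
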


\begin{Remark} If $M$ is geodesically complete with $\mathrm{Ric}\geq 0$, then one has the doubling condition
$$
\mu(\mathrm{B}_{2r}(x))\leq 2^m\mu(\mathrm{B}_r(x))\text{ for all $r>0$, $x\in M$,}
$$
which easily implies
$$
\mu(\mathrm{B}_{tr}(x))\leq 2^{2m}t^m\mu(\mathrm{B}_r(x))\text{ for all $r>0$, $t\geq 1$, $x\in M$,}
$$
so that (\ref{B2}) is satisfied in this situation (with constants that only depend on $m$). 
\end{Remark}

On the other hand, nonnegative Ricci curvature is not necessary for (\ref{B2}):

\begin{Example}\label{negric}
Let $\left(  N,h\right)  $ be a compact Riemannian manifold of dimension $m-1$
and let $\left(  M,g\right)  =\left(  N\times \IR,h+\Id t\otimes \Id t\right)  $. Then,
$\left(  M,g\right)$ satisfies the assumptions of Theorem \ref{centrale2} (actually, also those of Theorem \ref{centrale1}). Indeed, $M$ is co-compact, hence it has bounded
geometry up to order $\infty$. On the other hand, there exists a constant $C>0$
depending on the geometry of $N$ such that, for every $\left(  p_{0}
,t_{0}\right)  \in M$, $R>0$ and $t\geq1$
\[
\frac{\mu_M\left(  \mathrm{B}_{tR}^{M}\left(  \left(  p_{0},t_{0}\right)
\right)  \right)  }{\mu_M\left(  \mathrm{B}_{R}^{M}\left(  \left(  p_{0}
,t_{0}\right)  \right)  \right)  }\leq Ct^{m}.
\]
Indeed, since
\[
\max( \Id_{N},\Id_{\mathbb{R}})  \leq \Id_{M}=\sqrt{\Id_{N}
^{2}+\Id_{\mathbb{R}}^{2}}\leq\sqrt{2}\max(  \Id_{N},\Id_{\mathbb{R}})
\]
we have
\[
 \mathrm{B}_{R/\sqrt{2}}^{N}\left(  p_{0}\right)  \times  \mathrm{B}_{R/\sqrt{2}}^{\mathbb{R}
}\left(  t_{0}\right)  \subseteq \mathrm{B}_{R}^{M}\left(  \left(  p_{0},t_{0}\right)
\right)  \subseteq  \mathrm{B}_{R}^{N}\left(  p_{0}\right)  \times  \mathrm{B}_{R}^{\mathbb{R}
}\left(  t_{0}\right)
\]
proving that
\[
\frac{\mu_M\left( \mathrm{B}_{tR}^{M}\left(  \left(  p_{0},t_{0}\right)  \right)\right)}{\mu_M\left( \mathrm{B}_{R}^{M}\left(  \left(  p_{0},t_{0}\right)  \right)  \right)}\leq
\sqrt{2}t\cdot\frac{\mu_N\left( \mathrm{B}_{tR}^{N}\left(  p_{0}\right) \right) }
{\mu_N\left( \mathrm{B}_{R/\sqrt{2}}^{N}\left(  p_{0}\right)  \right)}.
\]
Therefore, we are reduced to show that
\[
\frac{\mu_N\left( \mathrm{B}_{tR}^{N}\left(  p_{0}\right) \right) }{\mu_N\left( \mathrm{B}_{R/\sqrt{2}
}^{N}\left(  p_{0}\right)  \right)}\leq Ct^{m-1}.
\]
To this end, note that, if
\[
tR\leq\frac{r_{\mathrm{in}j}\left(  N\right)  }{2}.
\]
then, the desired inequality follows from volume comparison. Indeed, let
\[
-K^{2}\leq\mathrm{Sec}_{N}\leq K^{2},
\]
then the continuous functions $\alpha_{1},\alpha_{2}:[0,r_{\mathrm{inj}}\left(
N\right)  /2]\rightarrow (0,\infty)$ defined by (P. Petersen notation, \cite{Petersen})
\begin{align*}
\alpha_{1}\left(  r\right)   &  =\frac{\int_{0}^{r}\mathrm{sn}_{K^{2}}
^{m-2}\left(  s\right)  \Id s}{r^{m-1}}\\
\alpha_{2}\left(  r\right)   &  =\frac{\int_{0}^{r}\mathrm{sn}_{-K^{2}}
^{m-2}\left(  s\right) \Id s}{r^{m-1}}
\end{align*}
satisfy
\[
A_{i}\leq\alpha_{i}\left(  r\right)  \leq B_{i}
\]
where $A_{i},B_{i}>0$ are constants depending only on $K,$ $m$ and
$r_{\mathrm{inj}}\left(  N\right)  $. It follows that
\[
\frac{\mu_N\left( \mathrm{B}_{tR}^{N}\left(  p_{0}\right) \right) }{\mu_N\left(\mathrm{B}_{R/\sqrt{2}
}^{N}\left(  p_{0}\right)\right)  }\leq\frac{B_{2}}{A_{1}}\frac{\left(  tR\right)
^{m-1}}{\left(  R/\sqrt{2}\right)  ^{m-1}}=Ct^{m-1},
\]
as claimed. On the other hand, if

\[
tR>\frac{r_{\mathrm{inj}}\left(  N\right)}{2},
\]
since
\[
\frac{\mu_N\left( \mathrm{B}_{tR}^{N}\left(  p_{0}\right)  \right)}{\mu_N\left( \mathrm{B}_{R/\sqrt{2}
}^{N}\left(  p_{0}\right) \right) }\leq\frac{\mu_N(N)}{\mu_N\left(
 \mathrm{B}_{r_{\mathrm{inj}}\left(  N\right)  /(t2\sqrt{2})}^{N}\left(  p_{0}\right) \right) }
\]
and
\[
\frac{r_{\mathrm{inj}}\left(  N\right)  }{t2\sqrt{2}}\leq\frac{r_{\mathrm{inj}}\left(  N\right)
}{2}
\]
using again volume comparison we get
\[
\mu_N\left(\mathrm{B}_{r_{\mathrm{inj}}\left(  N\right)  /t2\sqrt{2}}^{N}\left(  p_{0}\right)\right)
\geq A_{1}\left(  \frac{r_{\mathrm{inj}}\left(  N\right)  }{t2\sqrt{2}}\right)  ^{m-1}
\]
and, hence,
\[
\frac{\mu_N\left( \mathrm{B}_{tR}^{N}\left(  p_{0}\right)  \right)}{\mu_N\left( \mathrm{B}_{R/\sqrt{2}}^{N}\left(  p_{0}\right)  \right)}\leq\frac{\mu_N(N)}{A_{1}}\left(
\frac{2\sqrt{2}}{r_{\mathrm{inj}}\left(  N\right)  }\right)  ^{m-1}\cdot t^{m-1}.
\]
This completes the proof.
\end{Example}

The rest of this section is devoted to the proof of Theorem \ref{centrale1} and of Theorem \ref{centrale2}, respectively. \\
We will need the following auxiliary result (see for example Lemma 1.6 in \cite{hebey} and its proof) for the former:

\begin{Lemma}\label{eses} Assume that $M$ is geodesically complete with $\mathrm{Ric}\geq -C$ for some $C>0$. Then for
any $r>0$ there exists a sequence of points $\{x_i\}\subset M$ and a natural number $N =N(m,r,C) <\infty$, such that 
\begin{itemize}
\item $\mathrm{B}_{r/4}\left(  x_{i}\right)  \cap \mathrm{B}_{r/4}\left(  x_{j}\right)
=\emptyset$  for all $i,j\in\IN$ with $i\neq j$,

\item $\bigcup_{i\in\IN} \mathrm{B}_{r/2 }\left(  x_{i}\right)  =M$,

\item the intersection multiplicity of the system $\{\mathrm{B}_{2r }(x_{i})|i\in\IN\}$ is $\leq N$.
\end{itemize}
\end{Lemma}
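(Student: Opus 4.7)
The plan is a standard volume-comparison argument built on a maximal packing. First I would invoke Zorn's lemma (or a straightforward inductive selection, using that $M$ is second countable) to produce a \emph{maximal} countable family $\{x_i\}\subset M$ with the property that the balls $\mathrm{B}_{r/4}(x_i)$ are pairwise disjoint; this gives the first bullet automatically. Maximality then yields the second bullet: if some $y\in M$ were not in any $\mathrm{B}_{r/2}(x_i)$, then $d(y,x_i)\geq r/2$ for all $i$, so $\mathrm{B}_{r/4}(y)$ would be disjoint from each $\mathrm{B}_{r/4}(x_i)$, contradicting maximality.

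For the intersection-multiplicity bound, I would fix an arbitrary $y\in M$ and count the indices $i$ with $y\in \mathrm{B}_{2r}(x_i)$. For each such $i$, the triangle inequality gives the two-sided containment
\[
\mathrm{B}_{r/4}(x_i)\subset \mathrm{B}_{9r/4}(y)\subset \mathrm{B}_{17r/4}(x_i).
\]
Since the balls $\mathrm{B}_{r/4}(x_i)$ are disjoint, summing volumes over those $i$ yields
\[
\sum_{i:\,y\in \mathrm{B}_{2r}(x_i)} \mu\bigl(\mathrm{B}_{r/4}(x_i)\bigr)\leq \mu\bigl(\mathrm{B}_{9r/4}(y)\bigr).
\]

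The last step is to turn this into a uniform bound on the number of such indices. This is where the assumption $\mathrm{Ric}\geq -C$ and geodesic completeness enter: by the Bishop--Gromov volume comparison theorem applied at each $x_i$,
\[
\frac{\mu\bigl(\mathrm{B}_{17r/4}(x_i)\bigr)}{\mu\bigl(\mathrm{B}_{r/4}(x_i)\bigr)}\leq \frac{V_{-C}(17r/4)}{V_{-C}(r/4)}=:N,
\]
where $V_{-C}(\rho)$ denotes the volume of a geodesic ball of radius $\rho$ in the $m$-dimensional model space of constant Ricci curvature $-C$; note $N=N(m,r,C)$. Combined with $\mathrm{B}_{9r/4}(y)\subset \mathrm{B}_{17r/4}(x_i)$, this gives $\mu(\mathrm{B}_{r/4}(x_i))\geq N^{-1}\mu(\mathrm{B}_{9r/4}(y))$ for every relevant $i$, and substituting into the previous volume inequality produces
\[
\#\{i:\,y\in \mathrm{B}_{2r}(x_i)\}\cdot N^{-1}\mu\bigl(\mathrm{B}_{9r/4}(y)\bigr)\leq \mu\bigl(\mathrm{B}_{9r/4}(y)\bigr),
\]
yielding the desired uniform multiplicity bound $N(m,r,C)$.

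The conceptually delicate point is only that the Bishop--Gromov comparison is being used both ways (the balls $\mathrm{B}_{r/4}(x_i)$ must have volumes that are not too small relative to the enveloping ball $\mathrm{B}_{9r/4}(y)$), so one has to make sure the radii chosen after the triangle-inequality step really do nest as $r/4\leq 17r/4$ and can be controlled by a single ratio depending only on $m$, $r$, $C$. There is no issue with the cut locus because Bishop--Gromov holds globally on any geodesically complete manifold with a lower Ricci bound. This is the only genuine obstacle; the packing/covering part is purely set-theoretic.
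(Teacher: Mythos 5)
Your proof is correct, and it is exactly the standard maximal-packing plus Bishop--Gromov argument that the paper itself relies on: the paper does not prove the lemma but refers to Lemma 1.6 of Hebey's lecture notes, whose proof proceeds precisely along your lines (maximal $r/2$-separated set, triangle-inequality nesting of balls, and the two-sided use of volume comparison to bound the multiplicity). The radii bookkeeping $\mathrm{B}_{r/4}(x_i)\subset\mathrm{B}_{9r/4}(y)\subset\mathrm{B}_{17r/4}(x_i)$ and the resulting constant $N=V_{-C}(17r/4)/V_{-C}(r/4)$ are all in order.
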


Now we can give the

\begin{proof}[Proof of Theorem \ref{centrale1}] By Theorem \ref{univ} there is a 
$$
D=D(m,r_{\mathrm{inj}}(M),\left\|\mathrm{Ric}\right\|_{\infty})>0
$$
such that $r_{2,1,1/2}(M)\geq D$. Let $r:=D/2$. We take a covering $\cup_{i\in\IN} \mathrm{B}_{r/2}\left(  x_{i}\right)  =M$ as in Lemma \ref{eses}. By Theoren \ref{local} we have a $c=c(r,p,m,D)>0$ such that, for all $i\in\IN$, all $u\in\C^{\infty}_{\Ic}(M)$,
\begin{align*}
&\int_{\mathrm{B}_{r/2}\left(  x_{i}\right)} \left|\mathrm{Hess}\left(  u\right)\right|^p\Id \mu\\
&\leq c \int_{\mathrm{B}_{2r}\left(  x_{i}\right)} \left|\Delta u\right|^p\Id \mu+c\int_{\mathrm{B}_{2r}\left(  x_{i}\right)} \left|\mathrm{grad}\left(  u\right)\right|^p\Id \mu +c\int_{\mathrm{B}_{2r}\left(  x_{i}\right)} \left|u\right|^p\Id \mu, 
\end{align*} 
so summing over $i$ and using monotone convergence we get  
\begin{align*}
&\int_{M} \left|\mathrm{Hess}\left(  u\right)\right|^p\Id \mu\leq \sum_i\int_{\mathrm{B}_{r/2}\left(  x_{i}\right)} \left|\mathrm{Hess}\left(  u\right)\right|^p\Id \mu\\
&\leq c \int_{M} \sum_i 1_{\mathrm{B}_{2r}\left(  x_{i}\right)} \left|\Delta u\right|^p\Id \mu+c\int_{M} \sum_i 1_{\mathrm{B}_{2r}\left(  x_{i}\right)} \left|\mathrm{grad}\left(  u\right)\right|^p\Id \mu\\
&\>\>\>+c\int_{M} \sum_i 1_{\mathrm{B}_{2r}\left(  x_{i}\right)} \left|u\right|^p\Id \mu, 
\end{align*} 
which by Lemma \ref{eses} gives
$$
\left\Vert \mathrm{Hess}\left(  u\right)  \right\Vert _{p}    \leq (cN)^{1/p} \left( \left\Vert  \Delta u\right\Vert
_{p}   +    \left\Vert    \grad( u) \right\Vert _{p}
 +   \left\Vert u\right\Vert _{p}\right).
$$
A use of Proposition \ref{lemma_interpolation} a) completes the proof.
\end{proof}

\begin{Remark} \label{rem_otherproof}
Obviously, a similar argument can be used to prove Theorem \ref{relco} a). Simply cover the compact domain $\overline \Omega$ with a finite number of balls $\mathrm{B}_{r/2}$ with $0<2r < r_{2,1,1/2}(\overline \Omega)$.
\end{Remark}

Finally, we give the proof of Theorem \ref{centrale2}, which as we have already remarked in the introduction, uses the machinery of covariant Riesz-transforms. We will need the following auxiliary Hilbert space lemma:

\begin{Lemma}\label{edh} Let $S$ be a densely defined closed linear operator from a Hilbert space $\IHH_1$ to a Hilbert space $\IHH_2$, and let $T$ be a bounded self-adjoint operator in $\IHH_1$. Then for any $\lambda>0$ with $T\geq  -\lambda$ one has
$$
\left\| S(S^* S +T+\lambda +1)^{-1/2} \right\| \leq 1.
$$
\end{Lemma}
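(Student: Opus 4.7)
The plan is to reduce the inequality to a quadratic-form computation, using the spectral/form calculus for $S^*S$ and the boundedness of $T$. Set $A := S^*S + T + \lambda + 1$ and $B := T+\lambda+1$. Since $S$ is densely defined and closed, von Neumann's theorem gives that $S^*S$ is self-adjoint and nonnegative on $\mathcal{H}_1$, while by hypothesis $B$ is a bounded self-adjoint operator satisfying $B \geq 1$. Hence $A$, defined on $\mathrm{Dom}(S^*S)$, is self-adjoint as a bounded perturbation of a self-adjoint operator and satisfies $A \geq 1$. In particular $A^{-1/2}$ is a bounded, everywhere-defined self-adjoint operator with $\|A^{-1/2}\|\leq 1$, whose range equals $\mathrm{Dom}(A^{1/2})$.

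Next I would identify $\mathrm{Dom}(A^{1/2})$ with $\mathrm{Dom}(S)$, so that $SA^{-1/2}$ makes sense on all of $\mathcal{H}_1$. By standard form calculus, the form domain of the nonnegative self-adjoint operator $S^*S$ coincides with $\mathrm{Dom}(|S|) = \mathrm{Dom}(S)$, and on it the quadratic form is $w\mapsto \|Sw\|^2$. Since $B$ is bounded, the KLMN/Kato--Rellich form perturbation theorem implies that the form domain of $A$ is the same space $\mathrm{Dom}(S)$, with form $w \mapsto \|Sw\|^2 + \langle Bw,w\rangle$. Because the form domain of a nonnegative self-adjoint operator equals the domain of its square root, we conclude $\mathrm{Dom}(A^{1/2}) = \mathrm{Dom}(S)$.

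Now I carry out the key estimate. For arbitrary $v \in \mathcal{H}_1$ set $w := A^{-1/2}v$; by the previous step $w \in \mathrm{Dom}(S)$, and $A^{1/2}w = v$. Using the quadratic forms above together with $B \geq 1 \geq 0$,
\begin{align*}
\|Sw\|^2 &= \langle Sw,Sw\rangle \;\leq\; \|Sw\|^2 + \langle Bw,w\rangle \\
&= \bigl\langle A^{1/2}w, A^{1/2}w\bigr\rangle \;=\; \|v\|^2.
\end{align*}
Hence $\|S A^{-1/2} v\| \leq \|v\|$ for every $v \in \mathcal{H}_1$, which is precisely the claimed operator-norm bound.

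The only nontrivial point is the identification $\mathrm{Dom}(A^{1/2}) = \mathrm{Dom}(S)$, which ensures that $SA^{-1/2}$ is genuinely defined everywhere and that the quadratic-form manipulation is legitimate; once this is in place, the inequality is immediate from $T + \lambda \geq 0$. If one wishes to avoid invoking the KLMN theorem explicitly, the same identification follows from the fact that $A$ and $S^*S$ differ by a bounded self-adjoint operator, so their spectral resolutions give equivalent graph norms on the form domain.
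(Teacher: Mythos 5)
Your proof is correct, and it takes a route that is genuinely, if mildly, different from the paper's. The paper first bounds $\|S(S^*S+1)^{-1/2}\|\leq 1$ via the polar decomposition $S=U(S^*S)^{1/2}$ and the spectral calculus bound $\sup_{t\geq 0}\sqrt{t}/\sqrt{t+1}\leq 1$, then bounds $\|(S^*S+1)^{1/2}(S^*S+T+\lambda+1)^{-1/2}\|\leq 1$ from the operator inequality $S^*S+T+\lambda+1\geq S^*S+1$, and multiplies the two factors. You instead work directly with the quadratic form of $A=S^*S+T+\lambda+1$: once you know $\mathrm{Dom}(A^{1/2})=\mathrm{Dom}(S)$ and $\|A^{1/2}w\|^2=\|Sw\|^2+\langle (T+\lambda+1)w,w\rangle$, the bound $\|Sw\|^2\leq\|A^{1/2}w\|^2$ is immediate from $T+\lambda+1\geq 1\geq 0$. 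This is more economical (no polar decomposition, no intermediate factorization), and it makes explicit the one point the paper passes over with the parenthetical remark about ``obvious essential self-adjointness arguments,'' namely the identification of the form domain of $A$ with $\mathrm{Dom}(S)$, which is exactly what makes $SA^{-1/2}$ everywhere defined. The price is that you must invoke the form-perturbation machinery (or, as you note, the equivalence of graph norms induced by $A$ and $S^*S+1$ on the common form domain), whereas the paper's factorization keeps everything at the level of operator norms of explicitly spectral-calculus-accessible objects. Both arguments ultimately rest on the same form inequality $\langle Af,f\rangle\geq\|Sf\|^2$, so the difference is one of packaging rather than substance; either is acceptable.
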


\begin{proof} Firstly, the polar decomposition of $S$ reads $S=U(S^*S)^{1/2}$, with a partial isometry $U$ from $\IHH_1$ to $\IHH_2$ whose domain of isometry contains the range of $(S^* S)^{1/2}$. Secondly, we have 
$$
 S^* S +T+\lambda+1\geq   S^* S +1,
$$
which in this case means nothing but 
$$
\left\|(S^* S +T+\lambda+1)^{1/2} f \right\|\geq   \left\|(S^* S +1)^{1/2}f\right\|
$$
for all $f$ in the domain of definition of $(S^* S )^{1/2}$, in particular, 
$$
\left\|(S^* S +1)^{1/2}(S^* S +T+\lambda+1)^{-1/2} h\right\|\leq \left\| h \right\|\text{ for all $h\in  \IHH_2$} 
$$
so
\begin{align*}
\left\|(S^* S +1)^{1/2}(S^* S +T+\lambda+1)^{-1/2} \right\|\leq 1.
\end{align*}
Now we can estimate as follows
\begin{align*}
&\left\| S(S^* S +T+\lambda +1)^{-\f{1}{2}} \right\|\\
&=\left\| S(S^*S+1)^{-\f{1}{2}}(S^*S+1)^{\f{1}{2}}(S^* S +T+\lambda +1)^{-\f{1}{2} }  \right\|\\
&\leq \left\| (S^*S)^{\f{1}{2}}(S^*S+1)^{-\f{1}{2}}(S^*S+1)^{\f{1}{2}}(S^* S +T+\lambda +1)^{-\f{1}{2} }  \right\|\\
&\leq \left\| (S^*S)^{\f{1}{2}}(S^*S+1)^{-\f{1}{2}} \right\|\leq  \sup_{t\geq 0}  \sqrt{t} /  \sqrt{t+1} \leq 1,
\end{align*}
where we have used the spectral calculus for the last norm bound.
\end{proof}

\begin{proof}[Proof of Theorem \ref{centrale2}] The assumption $\left\|\mathrm{R}\right\|_{\infty}<\infty$ implies
\begin{align}\label{gal}
-c\leq \mathrm{Ric}\leq \tilde{c}\text{ for some $c=c(\left\|\mathrm{R}\right\|_{\infty},m)>0$, $\tilde{c}=\tilde{c}(\left\|\mathrm{R}\right\|_{\infty},m)>0$}, 
\end{align}
and we set $\sigma:= c+1>0$. We are going to prove the existence of a 
$$
C=C(m,p,\left\|\mathrm{R}\right\|_{\infty},\left\|\nabla \mathrm{R}\right\|_{\infty},D,\delta)>0
$$
such that for all $u\in\mathsf{C}^{\infty}_{\Ic}(M)$ one has
\begin{equation}
\left\Vert \nabla\mathrm{d}(\Delta_{0}+\sigma)^{-1}u\right\Vert _{p}\leq
C\left\Vert u\right\Vert _{p},\label{end1}
\end{equation}
which under geodesic completeness is equivalent to
\begin{equation}
\left\Vert \nabla(\Delta_{1}+\sigma)^{-1/2}\mathrm{d}(\Delta_{0}
+\sigma)^{-1/2}u\right\Vert _{p}\leq C\left\Vert u\right\Vert _{p}.\label{end2}
\end{equation}
To this end, we start by observing that by a classical result on
Riesz-transforms of functions by Bakry \cite{bakry} (see also \cite{li,li2} for the weighted case), there is a constant
$C_1=C_1(p)>0$ with
\begin{equation}
\left\Vert \mathrm{d}(\Delta_{0}+\sigma)^{-1/2}\right\Vert _{p,p}\leq
C_1.\label{cou2}
\end{equation}
Next, we are going to use Theorem 4.1 in \cite{thalmaier} in combination with
Example 2.6 therein to treat the $\nabla(\Delta_{1}+\sigma)^{-1/2}$ part: To
this end, let us first note that 
$$
\Delta_{1}=\nabla^{\dagger}\nabla+\mathrm{Ric}({\sharp},{\sharp}),
$$
so that applying Lemma \ref{edh} (where we omit obvious essential self-adjointness arguments) with $S=\nabla$ (on $1$-forms), and $T=\mathrm{Ric}({\sharp},{\sharp})$, which is read as a self-adjoint multiplication operator, bounded by assumption (\ref{gal}), we get that the operator
$$
T_{\sigma}:= \nabla(\Delta_{1}+\sigma)^{-1/2}
$$
from Theorem 4.1 in \cite{thalmaier} is bounded in the $\mathsf{L}^2$-sense, with operator norm $\leq 1$. It remains to check the corresponding assumptions $A$ and $B$ from
\cite{thalmaier}: Here, the validity of assumption $A$ follows immediately from
our curvature assumptions and (\ref{gal}), cf. Example 2.6 from \cite{thalmaier}. Assumption $B1$ follows from the
Laplacian comparison theorem and (\ref{gal}), and assumption $B3$ is implied by the usual
Li-Yau heat kernel estimates, using again (\ref{gal}). Finally, $B2$ is precisely our volume assumption (\ref{B2}). Thus,
by Theorem 4.1 in \cite{thalmaier} we get a
\[
C_2=C_2(m,p,\left\|\mathrm{R}\right\|_{\infty},\left\|\nabla \mathrm{R}\right\|_{\infty},D,\delta)>0
\]
with
\[
\left\Vert \nabla(\Delta_{1}+\sigma)^{-1/2}\right\Vert _{p,p}\leq
C_2 ,
\]
which, in combination with (\ref{cou2}), proves (\ref{end2}) with $C:=C_1C_2$, thus
(\ref{end1}), and the proof is complete.
\end{proof}

\section{Proof of Theorem \ref{counter}}\label{model}

In this section, we construct an explicit example of a complete Riemannian
manifold $M$ with unbounded curvature and that does not support the global
$\IL^{2}$-Calder\'{o}n-Zygmund inequality
\begin{equation}
\left\Vert \mathrm{Hess}\left(  u\right)  \right\Vert _{{2}}\leq C(
\left\Vert \Delta u\right\Vert _{{2}}+\left\Vert u\right\Vert _{{2}
}) \text{, }u\in \C_{c}^{\infty}\left(  M\right)  .\tag*{CZ($2$)}
\end{equation}
Roughly speaking, in order to violate $\mathrm{CZ}(2)$, the idea is to
minimize the contribution of $\Delta u$ with respect to $\mathrm{Hess}\left(
u\right)  $. Clearly, the best way to do this would be to choose $u$ harmonic
(and not affine) but this is impossible because $u$ has compact support.
To overcome the problem, we can take $u$ as the composition of a proper harmonic
function with a singularity at the origin and a cut-off function of
$\mathbb{R}$, compactly supported in $\left(  0,\infty\right)  $. Using this
composition we get rid of the singularity and produce a smooth, compactly
supported function whose $\IL^{2}$-norm of the Laplacian can be small when
compared with that of the Hessian. We shall implement this construction on a model
manifold where, for rotationally symmetric functions, the expressions of the
$\IL^{2}$-norms involved in $\mathrm{CZ}(2)$ are very explicit and directly
related to the geometry of the underlying space.\medskip

By an $m$-dimensional model manifold $\mathbb{R}_{\sigma}^{m}$ we mean the
Euclidean space $\mathbb{R}^{m}$ endowed with the smooth, complete Riemannian
metric that, in polar coordinates, writes as
\[
g=\Id r\otimes \Id r+\sigma^{2}\left(  r\right)  g_{\mathbb{S}^{m-1}},
\]
where $g_{\mathbb{S}^{m-1}}$ is the standard metric of $\mathbb{S}^{m-1}$ and
$\sigma:[0,\infty)\rightarrow\lbrack0,\infty)$ \ is a smooth function
satisfying the following structural conditions:
\[
\begin{array}
[c]{ll}
\text{(a)} & \sigma^{\left(  2k\right)  }\left(  0\right)  =0,\text{ }\forall
k=0,1,...\\
\text{(b)} & \sigma^{\prime}\left(  0\right)  =1\\
\text{(c)} & \sigma\left(  t\right)  >0\text{, }\forall t>0.
\end{array}
\]
We can always identify $\sigma$ with its smooth, odd extension $\sigma
:\mathbb{R}\rightarrow\mathbb{R}$ such that $\sigma\left(  t\right)
=-\sigma\left(  -t\right)  $ for every $t\leq0$. Recall that the sectional
curvatures of $\mathbb{R}_{\sigma}^{m}$ are given by
\begin{align*}
\mathrm{Sec}\left(  X\wedge\nabla r\right)    & =-\frac{\sigma^{\prime\prime
}}{\sigma}\\
\mathrm{Sec}\left(  X\wedge Y\right)    & =\frac{1-(\sigma^{\prime})^{2}
}{\sigma^{2}},
\end{align*}
for every $g$-orthonormal vectors $X,Y\in\nabla r^{\bot}$, where $\nabla r$
represents the radial direction. Moreover, observe that the Riemannian measure of $\mathbb{R}^m_{\sigma}$ is given by 
$$
\Id\mu = \sigma^{m-1}(r)\cdot\Id r \cdot \Id\mu_{\mathbb{S}^{m-1}}, 
$$
where $\Id\mu_{\mathbb{S}^{m-1}}$ denotes the canonical Riemannian measure on $\mathbb{S}^{m-1}$.

\medskip

Let us assume that
\[
\int^{\infty}\frac{\Id t}{\sigma^{m-1}\left(  t\right)  }=\infty.
\]
From the potential theoretic viewpoint, this means that $\mathbb{R}_{\sigma
}^{m}$ is parabolic, namely, the minimal positive Green kernel of the
Laplace-Beltrami operator of $\mathbb{R}_{\sigma}^{m}$ is identically
$\infty$. Then,
\[
G\left(  r\right)  =\int_{1}^{r}\frac{\Id t}{\sigma^{m-1}\left(  t\right)  }
\]
is a smooth, positive, strictly-increasing function on $\left(  0,\infty \right)  $ satisfying
\[
G\left(  r\right)  \left\{
\begin{array}
[c]{ll}
=\infty & \text{if }r=\infty\\
>0 & \text{if }r>1\\
=0 & \text{if }r=1\\
<0 & \text{if }0<r<1\\
=-\infty & \text{if }r=0^{+}.
\end{array}
\right.
\]
Moreover, $G\left(  r\right)  $ gives rise to a smooth, rotationally symmetric
harmonic function $G\left(  x\right)  $ on $\mathbb{R}_{\sigma}^{m}
\backslash\left\{  0\right\}  $. In particular:
\[
\Delta G=G^{\prime\prime}+\left(  m-1\right)  \frac{\sigma^{\prime}}{\sigma
}G^{\prime}=0\text{, on }\mathbb{R}_{\sigma}^{m}\backslash\left\{  0\right\}
.
\]

We need the following computational Lemma.

\begin{Lemma} \label{lemma_counter} 
Let $\mathbb{R}_{\sigma}^{m}$ \ be a complete, parabolic,
model manifold so that $\sigma^{1-m}\notin \IL^{1}\left(  +\infty\right)  $.
Let, as above,
\[
G\left(  r\right)  =\int_{1}^{r}\frac{1}{\sigma^{m-1}\left(  t\right)  }\Id t,
\]
and let $\{\alpha_k\},\{\beta_k\}\subset (0,\infty)$ be two sequences such that $1<\alpha_{k}<\beta_{k}$. Assume further that for any $k$ one has given a function $\phi_{k}\in \C_{c}^{\infty}\left(    0,\infty \right)$ which satisfies $\mathrm{supp}(\phi_{k})\subset [\alpha_{k},\beta_{k}]$, and define $u_{k}\in \C^{\infty}\left(  \mathbb{R}_{\sigma}^{m}\right)$ by setting $u_{k}(x)=\phi_{k}\left(  G(x)\right)$. Then, each $u_{k}$ is in fact compactly supported in 
$$
\left\{\alpha_{k}\leq G\leq\beta_{k}\right\}  \subset \mathbb{R}_{\sigma}^{m},
$$
and one has
\[
\begin{array}
[c]{l}
\left\Vert \mathrm{Hess}\left(  u_{k}\right)  \right\Vert _{{2}}^{2}
\geq\omega_{m}\int_{\alpha_{k}}^{\beta_{k}}\left(  \phi_{k}^{\prime}\left(
s\right)  \right)  ^{2}\left(  \dfrac{\sigma^{\prime}}{\sigma}\left(
G^{-1}\left(  s\right)  \right)  \right)  ^{2}\Id s,\bigskip\\
\left\Vert \Delta u_{k}\right\Vert _{2}^{2}=\omega_{m}\int_{\alpha_{k}
}^{\beta_{k}}\dfrac{\left(  \phi_{k}^{\prime\prime}\left(  s\right)  \right)
^{2}}{\left(  \sigma\left(  G^{-1}\left(  s\right)  \right)  \right)
^{2\left(  m-1\right)  }}\Id s,\bigskip\\
\left\Vert u_{k}\right\Vert _{2}^{2}=\omega_{m}\int_{\alpha_{k}}
^{\beta_{k}}\left(  \phi_{k}\left(  s\right)  \right)  ^{2}\left(
\sigma\left(  G^{-1}\left(  s\right)  \right)  \right)  ^{2\left(  m-1\right)
}\Id s,
\end{array}
\]
where $\omega_{m}>0$ is a dimensional constant.
\end{Lemma}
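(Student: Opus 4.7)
The plan is to exploit the rotational symmetry of both the metric $g$ and of the function $u_{k}=\phi_{k}\circ G$ in order to reduce each of the three $\IL^{2}$-norms to a one-variable integral in $r$, followed by the substitution $s=G(r)$.

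First I would verify that $u_{k}$ is genuinely a compactly supported smooth function on $\IR^{m}_{\sigma}$. Since $G$ is strictly increasing on $(0,\infty)$ with $G(0^{+})=-\infty$, $G(1)=0$, and $G(+\infty)=+\infty$ (the last limit using the parabolicity assumption $\sigma^{1-m}\notin\IL^{1}(+\infty)$), the hypothesis $\mathrm{supp}(\phi_{k})\subset[\alpha_{k},\beta_{k}]$ with $\alpha_{k}>1$ forces $u_{k}$ to be supported in the compact annulus $\{G^{-1}(\alpha_{k})\le r\le G^{-1}(\beta_{k})\}\subset\{r>1\}$, thus away from the singularity of $G$ at the origin.

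Next, by the chain rule together with the harmonicity of $G$ (which in the convention $\Delta=\Id^{\dagger}\Id$ reads $\Delta G=0$ on $\IR^{m}_{\sigma}\setminus\{0\}$) one immediately gets
\[
\Delta u_{k}=-\phi_{k}''(G)\,|\grad G|^{2},\qquad |\grad G|^{2}=(G'(r))^{2}=\sigma^{-2(m-1)}(r).
\]
For the Hessian I would use the composition rule $\mathrm{Hess}(\phi_{k}\circ G)=\phi_{k}''(G)\,\Id G\otimes\Id G+\phi_{k}'(G)\,\mathrm{Hess}(G)$, together with the standard expression for the Hessian of a radial function $f(r)$ on a model manifold,
\[
\mathrm{Hess}(f(r))=f''(r)\,\Id r\otimes\Id r+f'(r)\f{\sigma'(r)}{\sigma(r)}\bigl(g-\Id r\otimes\Id r\bigr).
\]
Plugging this in decomposes $\mathrm{Hess}(u_{k})$ into one radial component and $m-1$ equal diagonal angular components of magnitude $\phi_{k}'(G)G'(r)\sigma'(r)/\sigma(r)$; dropping the (signed) radial contribution in the Hilbert--Schmidt sum of squares immediately yields the pointwise lower bound
\[
|\mathrm{Hess}(u_{k})|^{2}\ge (m-1)\bigl(\phi_{k}'(G)\bigr)^{2}\left(\f{G'(r)\sigma'(r)}{\sigma(r)}\right)^{2}.
\]

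Finally, each of the three integrals is evaluated by polar coordinates, $\Id\mu=\sigma^{m-1}(r)\,\Id r\,\Id\mu_{\mathbb{S}^{m-1}}$, followed by the substitution $s=G(r)$, i.e.\ $\Id r=\sigma^{m-1}(G^{-1}(s))\,\Id s$. The exponents of $\sigma$ then combine cleanly: $\sigma^{-4(m-1)}\cdot\sigma^{m-1}\cdot\sigma^{m-1}=\sigma^{-2(m-1)}$ for the Laplacian term, $\sigma^{m-1}\cdot\sigma^{m-1}=\sigma^{2(m-1)}$ for the mass term, and for the Hessian the factor $(G'(r))^{2}=\sigma^{-2(m-1)}$ precisely cancels the product $\sigma^{m-1}\cdot\sigma^{m-1}$, leaving only $(\sigma'/\sigma)^{2}(G^{-1}(s))$. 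The dimensional constant $\omega_{m}$ can be taken to be $\mathrm{vol}(\mathbb{S}^{m-1})$ throughout; for the Hessian inequality the extra factor $m-1\ge 1$ (for $m\ge 2$) merely strengthens the bound. The only step that requires genuine care is this exponent bookkeeping through the change of variables; there is no deeper conceptual obstacle.
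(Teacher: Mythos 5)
Your proposal is correct and follows essentially the same route as the paper's proof: express the Hessian and Laplacian of the radial function $u_k=\phi_k\circ G$ in polar coordinates, use the harmonicity of $G$ to collapse $\Delta u_k$ to $\pm\phi_k''(G)\,\sigma^{-2(m-1)}$, discard the radial component of the Hessian to retain the angular part, and substitute $s=G(r)$ against the measure $\sigma^{m-1}\,\Id r\,\Id\mu_{\mathbb{S}^{m-1}}$. Your retained factor $(m-1)\geq 1$ only strengthens the stated Hessian bound, and the exponent bookkeeping checks out.
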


\begin{proof}
Recall that
\[
\mathrm{Hess}\left(  u_{k}\right)  =u_{k}^{\prime\prime}\cdot \Id r\otimes
\Id r+\frac{\sigma^{\prime}}{\sigma}u_{k}^{\prime}\cdot\sigma^{2}g_{\mathbb{S}
^{m-1}}.
\]
Therefore, we have
\begin{align*}
\left\vert \mathrm{Hess}\left(  u_{k}\right)  \right\vert ^{2} &  =\left(
u_{k}^{\prime\prime}\right)  ^{2}+\left(  m-1\right)  \left(  u_{k}^{\prime
}\right)  ^{2}\left(  \frac{\sigma^{\prime}}{\sigma}\right)  ^{2}\\
&  \geq\left(  u_{k}^{\prime}\right)  ^{2}\left(  \frac{\sigma^{\prime}
}{\sigma}\right)  ^{2}.
\end{align*}
Since
\begin{align*}
u_{k}^{\prime}\left(  r\right)   &  =\phi_{k}^{\prime}\left(  G\right)
G^{\prime}\\
&  =\phi_{k}^{\prime}\left(  G\right)  \frac{1}{\sigma^{m-1}}
\end{align*}
we get
\begin{align*}
\left\vert \mathrm{Hess}\left(  u_{k}\right)  \right\vert ^{2} &  \geq\left(
u_{k}^{\prime}\right)  ^{2}\left(  \frac{\sigma^{\prime}}{\sigma}\right)
^{2}\\
&  =\left(  \phi_{k}^{\prime}\left(  G\right)  \right)  ^{2}\left(  G^{\prime
}\right)  ^{2}\left(  \frac{\sigma^{\prime}}{\sigma}\right)  ^{2}\\
&  =\left(  \phi_{k}^{\prime}\left(  G\right)  \right)  ^{2}\left(
\frac{\sigma^{\prime}}{\sigma}\right)  ^{2}\frac{G^{\prime}}{\sigma^{m-1}}.
\end{align*}
In particular, letting $\omega_{m}$ be the volume of the standard $\left(
m-1\right)  $-sphere,
\begin{align*}
\left\Vert \mathrm{Hess}\left(  u_{k}\right)  \right\Vert _{2}^{2} &
=\omega_{m}\int_{0}^{\infty}\left\vert \mathrm{Hess}\left(  u_{k}\right)
\right\vert ^{2}\sigma^{m-1}\Id t\\
&  \geq\omega_{m}\int_{0}^{\infty}\left(  \phi_{k}^{\prime}\left(  G\right)
\right)  ^{2}\left(  \frac{\sigma^{\prime}}{\sigma}\right)  ^{2}G^{\prime}\Id t\\
&  =\omega_{m}\int_{G^{-1}\left(  \alpha_{k}\right)  }^{G^{-1}\left(
\beta_{k}\right)  }\left(  \phi_{k}^{\prime}\left(  G\right)  \right)
^{2}\left(  \frac{\sigma^{\prime}}{\sigma}\right)  ^{2}G^{\prime}\Id t\\
&  =\omega_{m}\int_{\alpha_{k}}^{\beta_{k}}\left(  \phi_{k}^{\prime}\left(
s\right)  \right)  ^{2}\left(  \frac{\sigma^{\prime}}{\sigma}\left(
G^{-1}\right)  \right)  ^{2}\Id s
\end{align*}
where, in the last equality, we have used the change of variable $G\left(
t\right)  =s$. 
Similarly, on noting that
\begin{align*}
u_{k}^{\prime\prime}\left(  r\right)   &  =\phi_{k}^{\prime\prime}\left(
G\right)  \left(  G^{\prime}\right)  ^{2}+\phi_{k}^{\prime}\left(  G\right)
G^{\prime\prime}\\
&  =\phi_{k}^{\prime\prime}\left(  G\right)  \frac{1}{\sigma^{2\left(
m-1\right)  }}+\phi_{k}^{\prime}\left(  G\right)  G^{\prime\prime},
\end{align*}
using also the harmonicity of $G$, we compute
\begin{align*}
\Delta u_{k}  &  =u_{k}^{\prime\prime}+\left(  m-1\right)  \frac
{\sigma^{\prime}}{\sigma}u_{k}^{\prime}\\
&  =\phi_{k}^{\prime\prime}\left(  G\right)  \frac{1}{\sigma^{2\left(
m-1\right)  }}+\phi_{k}^{\prime}\left(  G\right)  \left( G^{\prime\prime
}+\left(  m-1\right)  \frac{\sigma^{\prime}}{\sigma}G^{\prime}\right) \\
&  =\phi_{k}^{\prime\prime}\left(  G\right)  \frac{1}{\sigma^{2\left(
m-1\right)  }}.
\end{align*}
It follows that
\begin{align*}
\left\Vert \Delta u_{k}\right\Vert _{2}^{2}  &  =\omega_{m}\int
_{0}^{\infty}\left(  \phi^{\prime\prime}_k\left(  G\right)  \right)  ^{2}
\frac{1}{\sigma^{3\left(  m-1\right)  }}\Id t\\
&  =\omega_{m}\int_{G^{-1}\left(  \alpha_{k}\right)  }^{G^{-1}\left(
\beta_{k}\right)  }\left(  \phi_{k}^{\prime\prime}\left(  G\right)  \right)
^{2}\frac{1}{\sigma^{2\left(  m-1\right)  }}G^{\prime}\Id t\\
&  =\omega_{m}\int_{\alpha_{k}}^{\beta_{k}}\left(  \phi_{k}^{\prime\prime
}\left(  s\right)  \right)  ^{2}\frac{1}{\left(  \sigma\left(  G^{-1}\right)
\right)  ^{2\left(  m-1\right)  }}\Id s
\end{align*}
Finally, we compute
\begin{align*}
\left\Vert u_{k}\right\Vert _{2}^{2}  &  =\omega_{m}\int_{0}^{\infty
}\left(  \phi_{k}\left(  G\right)  \right)  ^{2}\sigma^{m-1}\Id t\\
&  =\omega_{m}\int_{G^{-1}\left(  \alpha_{k}\right)  }^{G^{-1}\left(
\beta_{k}\right)  }\left(  \phi_{k}\left(  G\right)  \right)  ^{2}
\sigma^{2\left(  m-1\right)  }G^{\prime}\Id t\\
&  =\omega_{m}\int_{\alpha_{k}}^{\beta_{k}}\left(  \phi_{k}\left(  s\right)
\right)  ^{2}\left(  \sigma\left(  G^{-1}\right)  \right)  ^{2\left(
m-1\right)  }\Id s.
\end{align*}
This completes the proof.
\end{proof}

Now we proceed with the choice of the warping function $\sigma$ and of the
cut--off functions $\phi_{k}$ in such a way that $\mathrm{CZ}(2)$ is violated
along the corresponding sequence of test-functions $u_{k}$. To this end, we
begin by taking
\[
m=2,\text{ }\alpha_{k}=k,\text{ }\beta_{k}=k+1.
\]

Next, we choose $\sigma\left(  t\right)  $ in such a way that
\[
t\leq\sigma\left(  t\right)  \leq t+1,\text{ }t>1.
\]

\begin{Remark}
We explicitly note that, by definition of $G$,
\[
\log\left(  \frac{t+1}{2}\right)  \leq G\left(  t\right)  \leq\log\left(
t\right)  ,\text{ }t>1.
\]
It follows that
\[
\mathrm{e}^{s}\leq G^{-1}\left(  s\right)  \leq2\mathrm{e}^{s}-1,\text{ }s>0.
\]
In particular,
\[
\mathrm{e}^{k}\leq G^{-1}\left(  s\right)  \leq2 \mathrm{e}^{k+1}-1,\text{ on }[k,k+1].
\]
Whence, since
\[
G^{-1}\left(  k+1\right)  -G^{-1}\left(  k\right)  \geq \mathrm{e}^{k+1}-2\mathrm{e}^{k}
+1=\mathrm{e}^{k}\left(  \mathrm{e}-2\right)  +1>1
\]
we also deduce that, for each $k$, there exists some integer $h=h\left(
k\right)  >k$ such that
\[
\left[  h,h+1\right]  \subseteq\left[  G^{-1}\left(  k\right)  ,G^{-1}\left(
k+1\right)  \right]  \text{.}
\]
Furthermore,
\[
\mathrm{e}^{s}\leq\sigma\left(  G^{-1}\left(  s\right)  \right)  \leq2\mathrm{e}^{s}\text{.}
\]
These estimates will be used repeatedly in the sequel.
\end{Remark}

We also require that $\sigma\left(  t\right)  $ oscillates in each interval
$\left[  k,k+1\right]  $ with a slope that increases with $k$. We can model
the oscillating part by segments like 
$$
t\mapsto\bar{t}+\left(  t-\bar
{t}\right)  \left(  \varepsilon_{k}+1\right)  /\varepsilon_{k}\>\text{ on $[\bar
{t},\bar{t}+\varepsilon_{k}]$,} 
$$
and 
$$
t\mapsto\bar{t}+2\varepsilon
_{k}+\left(  \bar{t}+2\varepsilon_{k}-t\right)  /\varepsilon_{k}\>\text{ on $[\bar
{t}+\varepsilon_{k},\bar{t}+2\varepsilon_{k}]$}, 
$$
with $\varepsilon_{k}\to 
 0+$. The upper and lower angles are smoothened out in regions as close to
the vertices as we desire. The smoothing can be realized via concave (resp.
convex) functions; see \cite{ghomi}.

\begin{Remark}
By construction, each rectilinear portion of $(\sigma^{\prime})^{2}$ grows
like $1/\varepsilon_{k}^{2}$ on an interval of approximate length
$\varepsilon_{k}$.
\end{Remark}

\begin{Remark}
The smoothing is obtained via functions of increasingly high second
derivative. It follows that the Gaussian curvature of $\mathbb{R}_{\sigma}
^{2}$ explodes to  $\infty$ as the oscillatory part becomes closer and closer
to vertical segments. We also point out that, in dimensions
$m\geq3$ this construction gives rise to a model manifold whose sectional
curvatures, both radial and tangential, explode to $\infty$. Finally, observe
that, due to the profile of $\sigma$, the manifold should have vanishing injectivity
radius (although, at the pole $0\in\mathbb{R}_{\sigma}^{2}$, it holds that
$r_{\mathrm{inj}}\left(  0\right)  =\infty$).
\end{Remark}

To conclude, we choose $\phi_{k}\left(  t\right)  =\phi\left(  t-k\right)  $
where $\phi\left(  t\right)  $ is compactly supported in $[0,1]$ and satisfies
$\phi\left(  t\right)  =2t$ on $[1/4,1/2].$ In this way, $\phi_{k}^{\prime
}\equiv2$ on the interval of fixed length $[k+1/4,k+1/2]$ so to capture some
of the oscillations of $\sigma$. Note also that $\left\Vert \phi
_{k}\right\Vert _{\infty}$ and $\left\Vert \phi_{k}^{\prime\prime
}\right\Vert _{\infty}$ are uniformly bounded.\medskip

Now, according to Lemma \ref{lemma_counter}, we have the following estimates:
\[
\left\Vert u_{k}\right\Vert _{2}^{2} =\omega_{2}\int_{k}^{k+1}\left(
\phi_{k}\left(  s\right)  \right)  ^{2}\left(  \sigma\left(  G^{-1}\left(
s\right)  \right)  \right)  ^{2}\Id s \leq C\mathrm{e}^{2k},
\]
and
\begin{align*}
\left\Vert \Delta u_{k}\right\Vert _{2}^{2} &= \omega_{2}\int_{k}^{k+1}
\dfrac{\left(  \phi_{k}^{\prime\prime}\left(  s\right)  \right)  ^{2}}{\left(
\sigma\left(  G^{-1}\left(  s\right)  \right)  \right)  ^{2}}\Id s\\
&\leq C\int
_{k}^{k+1}\frac{\Id s}{\mathrm{e}^{2s}}=\frac
{C}{\mathrm{e}^{2k}}
\end{align*}
and, finally,
\begin{align*}
\left\Vert \mathrm{Hess}\left(  u_{k}\right)  \right\Vert _{2}^{2} &
\geq\omega_{2}\int_{k}^{k+1}\left(  \phi_{k}^{\prime}\left(  s\right)
\right)  ^{2}\left(  \dfrac{\sigma^{\prime}}{\sigma}\left(  G^{-1}\left(
s\right)  \right)  \right)  ^{2}\Id s\\
&  \geq\frac{\omega_{2}}{\mathrm{e}^{2\left(  k+1\right)  }}\int_{k}^{k+1}\left(
\phi_{k}^{\prime}\left(  s\right)  \right)  ^{2}\left(  \sigma^{\prime}\left(
G^{-1}\left(  s\right)  \right)  \right)  ^{2}\Id s\\
&  \geq\frac{C}{\mathrm{e}^{2\left(  k+1\right)  }}\int_{0}^{\varepsilon_{h}}\frac
{1}{\varepsilon_{h}^{2}} \Id s\text{, }\>h=h\left(  k\right)  >k,\\
&  \geq\frac{C}{\mathrm{e}^{2k}}\frac{1}{\varepsilon_{k}}.
\end{align*}
Whence, we deduce that we can choose $\varepsilon_{k}\searrow0$ in such a way
that $\mathrm{CZ}(2)$  is violated.
This completes the proof of Theorem \ref{counter}.

\section{Calder\'on-Zygmund inequalities on $H$-hypersurfaces}\label{etre2}

Let $\mathbb{M}^{m+1}(c)$ denote the complete, simply connected space-form of constant sectional curvature $c \leq 0$. In this section we explore the validity of the $\IL^p$-Calder\'on-Zygmund inequalities on a largely investigated class of submanifolds of $\mathbb{M}^{m+1}(c)$: the hypersurfaces of constant mean curvature $H \in \mathbb{R}$ ($H$-hypersurfaces for short) with finite total scalar curvature.

Let $f: M \to \mathbb{M}^{m+1}(c)$ be a complete, connected, oriented, isometrically immersed submanifold of dimension $\dim M=m\geq 3$. Its second fundamental tensor, with respect to a chosen Gauss map $\nu$, is denoted by $\mathbf{II}$. The corresponding mean curvature vector field is
$\mathbf{H}=\mathrm{trace}(\mathbf{II})/m$. We write $\mathbf{H}=H\nu$, where the smooth function $H$ is the mean curvature function of the hypersurface, and we assume that $H$ is constant. The \emph{total curvature} of the constant mean curvature hypersurface $M$ is the $\IL^{m}$-norm of its traceless second fundamental tensor $\mathbf{\Phi}=\mathbf{II}-\mathbf{H}g$. We say that $M$ has \emph{finite total curvature} if $\|\mathbf{\Phi}\|_m  < \infty.$ In case $\mathbf{H}=0$ the hypersurface is called \emph{minimal} and the finite total curvature condition reduces to $\|\mathbf{II}\|_m <\infty$.

A complete, oriented $H$-hypersurface $M$ in $\mathbb{M}^{m+1}(c)$ of finite total curvature must be necessarily closed provided $H^2+c>0$. Indeed, according to \cite{Anderson, BDS}, the traceless tensor $\mathbf{\Phi}$ satisfies the decay condition
\begin{equation}\label{curvest}
\sup_{M\setminus \mathrm{B}^M_R}|\mathbf{\Phi}| \to 0 \textit{ as } R \to \infty.
\end{equation}
See also \cite{PV}. Therefore, by Gauss equations, the Ricci curvature of $M$ is positively pinched outside a compact set, \cite{Leung}, and the compactness conclusion follows from the Bonnet-Myers type theorems in \cite{Galloway}.

Since, on the one hand, the condition $H^2+c > 0$ implies obvious non-existence results and, on the other hand, we are mainly interested in non-compact situations, from now on we assume that
\begin{equation}\label{H-compatibility}
H^2+c \leq 0.
\end{equation}
In particular, if $c=0$, then $M$ is minimal.

Under the compatibility condition (\ref{H-compatibility}), it is a well known consequence of the curvature estimate (\ref{curvest}) that the $H$-hypersurface is properly immersed and it has a finite number of ends, each of which is diffeomorphic to a cylinder over some compact hypersurface; \cite{Anderson, Castillon}. Moreover, up to imposing a more stringent pinching on $H$ when $c<0$, the volume of each end is subjected to a certain growth; \cite{Anderson, PV}. Actually, it is known that \emph{any} complete Riemannian manifold isometrically immersed with \emph{bounded mean curvature} into a Cartan-Hadamard manifold satisfies the \emph{non-collapsing} condition at infinity

\begin{equation}\label{non-collapsing}
\inf_{x \in M} \mu(\mathrm{B}_1(x)) = v > 0.
\end{equation}
Indeed, according to \cite{HS}, such a submanifold enjoys the $\IL^1$-Sobolev inequality
\[
\left\|u\right\|_{\frac{m}{m-1}} \leq C( \left\|\grad(u)\right\|_1 + \left\|u\right\|_1),
\]
for every $u\in \C^{\infty}_\Ic(M)$ and for some constant $C>0$ depending on $m$ and $\|\mathbf{H}\|_{\infty}<+\infty$. Whence, it is standard to deduce the validity of (\ref{non-collapsing}) by integrating the differential inequality 
$$
\mu(\mathrm{B}_r(x))^{\frac{m-1}{m}} \leq C \left(\frac{\Id}{\Id r}\mu(\mathrm{B}_r(x))+\mu(\mathrm{B}_r(x))\right)
$$
that arises from a suitable choice of the (radial) cut-off functions $u$ and a standard application of the co-area formula. This is part of the classical Federer-Fleming argument. Note that,  using a rescaling procedure, the unit ball in (\ref{non-collapsing}) can be replaced by any ball of fixed radius $r>0$. Obviously, in this case, the constant $v$ will depend on $r$.

We are now ready to prove the main result of the section.

\begin{Theorem}\label{th_hypersurfaces}
Let $f:M \to \mathbb{M}^{m+1}(c)$ be a complete, non-compact, oriented, $H$-hypersurface with finite total curvature into the complete, simply connected space-form $\mathbb{M}^{m+1}(c)$ of constant  sectional curvature $c \leq 0$. Then $M$ satisfies the assumptions of Theorem \ref{centrale1}, in particular, for every $1<p< \infty$, the Calder\'on-Zygmund inequality $\mathrm{CZ}(p)$ holds on $M$.
\end{Theorem}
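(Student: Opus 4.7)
The plan is to verify the two hypotheses of Theorem \ref{centrale1}, namely $\left\|\mathrm{Ric}\right\|_{\infty}<\infty$ and $r_{\mathrm{inj}}(M)>0$, and then invoke that theorem directly.

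\textbf{Step 1: bounded second fundamental form.} Since $|\mathbf{II}|^{2}=|\mathbf{\Phi}|^{2}+mH^{2}$ and $H$ is constant, it suffices to bound $|\mathbf{\Phi}|$. The curvature decay (\ref{curvest}) gives $\sup_{M\setminus\mathrm{B}^{M}_{R}}|\mathbf{\Phi}|\to 0$ as $R\to\infty$, so $|\mathbf{\Phi}|$ is bounded outside some compact set $K\subset M$. On $K$, boundedness is automatic by continuity, hence $|\mathbf{II}|\in\mathsf{L}^{\infty}(M)$.

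\textbf{Step 2: bounded Ricci curvature.} The Gauss equation
\[
R^{M}(X,Y,Z,W)=R^{\mathbb{M}}(X,Y,Z,W)+\langle\mathbf{II}(X,W),\mathbf{II}(Y,Z)\rangle-\langle\mathbf{II}(X,Z),\mathbf{II}(Y,W)\rangle,
\]
together with the fact that the ambient $\mathbb{M}^{m+1}(c)$ has constant sectional curvature, expresses the sectional (hence Ricci) curvature of $M$ as a universal polynomial in $c$ and the components of $\mathbf{II}$. By Step 1 this yields $\|\mathrm{Ric}\|_{\infty}<\infty$ with explicit bound depending on $c$, $H$, $m$ and $\sup_{M}|\mathbf{\Phi}|$.

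\textbf{Step 3: non-collapsing.} Since $c\leq 0$, the ambient space $\mathbb{M}^{m+1}(c)$ is Cartan-Hadamard, and $\left\|\mathbf{H}\right\|_{\infty}=|H|<\infty$ because $H$ is constant. The Hoffman-Spruck $\mathsf{L}^{1}$-Sobolev inequality therefore applies and, as recalled in the text via the Federer-Fleming/co-area argument, yields the non-collapsing condition
\[
\inf_{x\in M}\mu(\mathrm{B}_{1}(x))=v>0.
\]

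\textbf{Step 4: positive injectivity radius.} Combining Step 2 (a uniform two-sided sectional curvature bound $|\mathrm{Sec}_{M}|\leq K$) with Step 3 (uniform volume lower bound on unit balls), the classical Cheeger-Gromov-Taylor injectivity radius estimate produces a constant $\iota=\iota(m,K,v)>0$ with $r_{\mathrm{inj}}(x)\geq \iota$ for every $x\in M$, hence $r_{\mathrm{inj}}(M)>0$.

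\textbf{Step 5: conclude.} With Steps 2 and 4 in hand, the hypotheses of Theorem \ref{centrale1} are met, so $\mathrm{CZ}(p)$ holds on $M$ for every $1<p<\infty$.

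The only real difficulty is Step 4, since passing from a volume lower bound and bounded sectional curvature to a uniform injectivity radius bound requires a genuinely nontrivial geometric input (Cheeger's lemma in the Cheeger-Gromov-Taylor form). Steps 1-3 are essentially bookkeeping once the curvature decay (\ref{curvest}) and the Hoffman-Spruck Sobolev inequality for submanifolds of Cartan-Hadamard targets are quoted.
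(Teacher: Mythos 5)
Your proposal is correct and follows essentially the same route as the paper: bounded second fundamental form from the decay of $\mathbf{\Phi}$, bounded curvature via the Gauss equations, non-collapsing from the Hoffman--Spruck Sobolev inequality, the Cheeger--Gromov--Taylor estimate for $r_{\mathrm{inj}}(M)>0$, and then Theorem \ref{centrale1}. The paper's proof is just a more condensed version of the same argument.
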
 

\begin{proof}
Combining the Gauss equations with estimate (\ref{curvest}) on the traceless second fundamental tensor, we deduce that $M$ has bounded sectional curvature. On the other hand, $M$ satisfies the non-collapsing condition (\ref{non-collapsing}). It follows from  Theorem 4.7 in \cite{CGT} that $r_\mathrm{inj}(M)>0$. Therefore, we can apply Theorem \ref{centrale1} above and conclude the validity of $\mathrm{CZ}(p)$.
\end{proof}

\begin{Remark}
The decay of the traceless second fundamental tensor of the $H$-hypersurface $M$ of $\mathbb{M}^{m+1}(c)$ holds provided $M$ has finite $\IL^{p}$-total curvature $\|\mathbf{\Phi}\|_p < \infty$ for some $m \leq p < \infty$, \cite{PV}. The conclusion of Theorem \ref{th_hypersurfaces} can be extended accordingly. 
\end{Remark}

\begin{Remark}
As a matter of fact, inspection of the proof of Theorem \ref{th_hypersurfaces} shows that it relies on two facts: (a) by Gauss equations, the sectional curvature of a manifold $M$ is bounded if $M$ is isometrically immersed, with bounded second fundamental form, into an ambient  manifold of bounded curvature; (b) the injectivity radius of $M$ is bounded from below by a positive constant provided $M$ is isometrically immersed, with bounded mean curvature, into a Cartan-Hadamard manifold. Therefore, Theorem \ref{th_hypersurfaces} can be extended in the following more abstract form:  
\end{Remark}

\begin{Theorem}
Let $f: M \to N$ be a complete Riemannian manifold isometrically immersed into a complete, simply connected manifold $N$ with sectional curvatures satisfying $-A^2 \leq \mathrm{Sec}_{N} \leq 0$. If the second fundamental tensor of the immersion satisfies $\|\mathbf{II}\|_{\infty}<\infty$, then $M$ satisfies the assumptions of Theorem \ref{centrale1}, in particular $\mathrm{CZ}(p)$ holds on $M$, for every $1<p<\infty$.
\end{Theorem}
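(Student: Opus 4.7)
The strategy is to reduce this to an application of Theorem \ref{centrale1} by verifying the two geometric hypotheses, namely boundedness of the Ricci curvature and positivity of the global injectivity radius. The proof essentially follows the same blueprint as Theorem \ref{th_hypersurfaces}, since the hypersurface assumption there was used only through these two consequences.

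The first step is to bound the curvature of $M$. Since $N$ has sectional curvatures pinched in $[-A^2, 0]$ and the second fundamental form is globally bounded, the Gauss equations
\[
\mathrm{Sec}_M(X \wedge Y) = \mathrm{Sec}_N(f_* X \wedge f_* Y) + \frac{(\mathbf{II}(X,X),\mathbf{II}(Y,Y)) - |\mathbf{II}(X,Y)|^2}{|X\wedge Y|^2}
\]
immediately yield $|\mathrm{Sec}_M| \leq A^2 + 2\|\mathbf{II}\|_\infty^2$, hence $\|\mathrm{Ric}_M\|_\infty < \infty$. This needs no finite-total-curvature decay argument, as the global bound on $\mathbf{II}$ is assumed outright.

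The second step, which is the heart of the matter, is to establish $r_{\mathrm{inj}}(M) > 0$. Here the bounded mean curvature estimate $\|\mathbf{H}\|_\infty \leq \|\mathbf{II}\|_\infty < \infty$ together with the Cartan-Hadamard hypothesis on $N$ permits us to invoke the Hoffman-Spruck $\mathsf{L}^1$-Sobolev inequality \cite{HS} for the immersion $f:M \to N$. As explained in the argument preceding Theorem \ref{th_hypersurfaces}, a standard Federer-Fleming style iteration applied to a radial cut-off function, together with the co-area formula, then yields the non-collapsing condition
\[
\inf_{x \in M} \mu(\mathrm{B}_1(x)) = v > 0.
\]
Combined with the two-sided sectional curvature bound obtained in the first step, the theorem of Cheeger-Gromov-Taylor (Theorem 4.7 in \cite{CGT}) gives the lower bound $r_{\mathrm{inj}}(M) \geq c(m, A, \|\mathbf{II}\|_\infty, v) > 0$.

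With these two ingredients in place, Theorem \ref{centrale1} applies and produces $\mathrm{CZ}(p)$ on $M$ for every $1 < p < \infty$. The only subtlety I anticipate is a bookkeeping one: verifying that Hoffman-Spruck's inequality (and the ensuing Federer-Fleming argument) does not require the codimension-one assumption nor the $H$-hypersurface condition used in Theorem \ref{th_hypersurfaces}, but works for an arbitrary isometric immersion into a Cartan-Hadamard manifold with bounded mean curvature vector. Once this is checked, the remainder of the proof is an unambiguous concatenation of the two observations above.
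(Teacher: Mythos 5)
Your proposal is correct and follows exactly the route the paper intends: the Gauss equations together with $\|\mathbf{II}\|_\infty<\infty$ and the pinched ambient curvature give a two-sided bound on $\mathrm{Sec}_M$, while the Hoffman--Spruck Sobolev inequality (which, as the paper's discussion preceding Theorem \ref{th_hypersurfaces} already notes, holds for immersions of arbitrary codimension into Cartan--Hadamard manifolds with bounded mean curvature) yields non-collapsing and hence, via Cheeger--Gromov--Taylor, $r_{\mathrm{inj}}(M)>0$. This is precisely the content of the paper's remark that the proof of Theorem \ref{th_hypersurfaces} rests only on these two facts, so Theorem \ref{centrale1} applies as you say.
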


\appendix
\numberwithin{equation}{section}
\counterwithin{theorem}{section}

\section{Harmonic coordinates}\label{harm}

In this section, we collect some facts concerning harmonic coordinates. Let again $M\equiv (M,g)$ be an arbitrary smooth Riemannian $m$-manifold without boundary, let $\nabla$ be the Levi-Civita connection and $\Delta $ the Laplace-Beltrami operator. 

\begin{Definition} Let $x\in M$, $Q\in (1,\infty)$, $k\in\IN_{\geq 0}$, $\alpha\in (0,1)$. The \emph{$\mathsf{C}^{k,\alpha}$-harmonic radius of $M $ with accuracy $Q$ at $x$} is defined to be the largest real number $r_{Q,k,\alpha}(x)$ with the following property: The ball $\mathrm{B}_{r_{Q,k,\alpha}(x)}(x)$ admits a centered harmonic coordinate system 
$$
\phi: \mathrm{B}_{r_{Q,k,\alpha}(x)}(x)\longrightarrow  \IR^m,
$$ 
(that is, $\phi(x)=0$ and $ \Delta\phi^j=0$ on $\mathrm{B}_{r_{Q,k,\alpha}(x)}(x)$ for each $j$), such that 
\begin{align}
Q^{-1}(\delta_{ij})\leq (g_{ij})\leq Q(\delta_{ij})\text{ in $\mathrm{B}_{r_{Q,k,\alpha}(x)}(x)$ as symmetric bilinear forms},\label{hari1}
\end{align}
 and for all $i,j\in\{1,\dots, m\}$,
\begin{align}\label{hari2}
 &\sum_{\beta\in\IN^m, 1\leq |\beta|\leq k}r_{Q,k,\alpha}(x)^{|\beta|} \sup_{x\rq{}\in\mathrm{B}_{r_{Q,k,\alpha}(x)}(x) } |\partial_{\beta}g_{ij}(x\rq{})|\\
&+\sum_{\beta\in\IN^m, |\beta|=k}r_{Q,k,\alpha}(x)^{k+\alpha} \sup_{x\rq{},x\rq{}\rq{}\in\mathrm{B}_{r_{Q,k,\alpha}(x)}(x), x\rq{}\rq{}\ne x\rq{} }\f{ |\partial_{\beta}g_{ij}(x\rq{})-\partial_{\beta}g_{ij}(x\rq{}\rq{})|}{\Id(x\rq{},x\rq{}\rq{})^{\alpha}}\nn\\
&\leq Q-1.\nn
\end{align}

We shall refer to a coordinate system as above as a \emph{$\mathsf{C}^{k,\alpha}$-harmonic coordinate system} with accuracy $Q$ on $\mathrm{B}_{r_{Q,k,\alpha}(x)}(x)$.
\end{Definition}

\begin{Remark} 1. Note that, when compared with the corresponding definition from \cite{hebey}, we additionally require $\phi(x)= 0$ here.\\
2. It is easily checked that the function $x\mapsto r_{Q,k,\alpha}(x)$ is globally Lipschitz.\\
3. By polarization, the inequality (\ref{hari1}) implies that for some $C=C(m,Q)>0$ it holds that 
$$
\max_{i,j\in\{1,\dots,m\}}\sup_{x\rq{}\in \mathrm{B}_{r_{Q,k,\alpha}(x)}(x)} |g_{ij}(x\rq{})|\leq C,
$$
in particular, putting everything together, there is a continuous decreasing function 
$$
F=F_{Q,k,\alpha,m}:(0,\infty)\longrightarrow  (0,\infty), 
$$
such that the Euclidean $\mathsf{C}^{k,\alpha}$-norm of the metric in this coordinates satisfies
\begin{equation}\label{Cka-norm}
\max_{i,j\in\{1,\dots,m\}}\left\|g_{ij}\right\|_{\mathsf{C}^{k,\alpha}}\leq  F(r_{Q,k,\alpha}(x)).
\end{equation}
This justifies the name \emph{$\mathsf{C}^{k,\alpha}$-(harmonic) coordinate system}.\\
\end{Remark}

\begin{Remark}\label{rem_inverse}
The natural differential operators of $M$ (such as the gradient, the Laplacian
and the Hessian of a given function) are defined in terms of the inverse
metric coefficients $g^{ij}$. It is easy to see that, within the coordinate
ball $\mathrm{B}_{r_{Q,k,\alpha}\left(  x\right)  }\left(  x\right)  $, they inherit
the $\C^{k,\alpha}$-type control, in terms of $Q,m,\alpha,k$, of the metric coefficients $g_{ij}$. Indeed,
the Cramer formula states that
\begin{equation}
g^{ij}=\left(  -1\right)  ^{i+j}\frac{\det G_{ji}}{\det\left(  g_{ij}\right)
},\label{inverse1}
\end{equation}
where $G_{ij}$ denotes the $\left(  m-1\right)  \times\left(  m-1\right)  $
matrix obtained from $(g_{ij})$ by deleting the $i^{\text{th}}$-row and the
$j^{\text{th}}$-column. Both the numerator and the denominator of
(\ref{inverse1}) are obtained as the sum of products of $C^{0,\alpha}
$-controlled functions and, by (\ref{hari1}), $Q^{-m}\leq\det(g_{ij})\leq
Q^{m}$.
Therefore, we can obtain a $\C^{0,\alpha}$ control of the functions
$g^{ij}$ by using the $\C^{0,\alpha}$ estimates of $g_{ij}$ in combination with
the following elementary fact: \bigskip

Assume that $f,h:U\subseteq\mathbb{R}^{m}\rightarrow\mathbb{R}$ satisfy
$C^{-1}\leq h\leq C$ and $|f| \leq D$, for some constants $C,D>0$. Then:
\begin{align*}
\left\vert \Delta_{x,y}(f/h)  \right\vert  & \leq
C^{3}\left\vert \Delta_{x,y}(f)\right\vert +C^{2}D\left\vert \Delta
_{x,y}(h)\right\vert \\
\left\vert \Delta_{x,y}\left(  fh\right)  \right\vert  & \leq C\left\vert
\Delta_{x,y}(f)\right\vert +D\left\vert \Delta_{x,y}(h)\right\vert \\
\left\vert \Delta_{x,y}\left(  f+h\right)  \right\vert  & \leq\left\vert
\Delta_{x,y}(f)\right\vert +\left\vert \Delta_{x,y}(h)\right\vert ,
\end{align*}
where, to simplify the writings, we have set $\Delta_{x,y}(\bullet)=\bullet(
y)  -\bullet\left(  x\right)  $. \bigskip

Now,  differentiating the identity $g^{ik}\cdot g_{kj}=\delta_{ij}$ we get
\begin{equation}\label{firstder^ij}
\partial_t g^{ij}=-g^{ih}\cdot \partial_t g_{hj} \cdot g^{jk}.
\end{equation}
It follows that a  $\C^{0,\alpha}$ control of $\partial g^{ij}$ is obtained from those of $g^{ij}$ and $\partial g_{ij}$. Proceeding inductively on the derivatives of (\ref{firstder^ij}) we finally deduce the claimed $\C^{k,\alpha}$ estimate of $g^{ij}$.
\end{Remark}

The main result in this context states that control on the Ricci curvature up to order $k$ together with control on the injectivity radius imply control on $r_{Q,k+1,\alpha}(x)$. To this end, for any $\Omega\subset M$ open and any $\varepsilon>0$ let
$$
\Omega_{\varepsilon}:=\{x| \>x\in M, \Id(x,\Omega)<\varepsilon\}\subset M
$$
be the $\varepsilon$-neighborhood of $\Omega$. 

\begin{Theorem}\label{univ} Let $Q\in (1,\infty)$, $\alpha\in (0,1)$. Assume that there is an open subset $\Omega\subset M$, and numbers  $k\in\IN_{\geq 0}$, $\varepsilon>0$, $r>0$, $c_0,\dots, c_k>0$ with
$$
\left|\nabla^j \mathrm{Ric}(x)\right|_x\leq c_j,\> r_{\mathrm{inj}}(x)\geq r \text{ for all $x\in \Omega_{\varepsilon}$, $j\in \{0,\dots,k\}$.}
$$
Then there is a constant 
$$
C=C(m,Q,k,\alpha,\varepsilon,r,c_1,\dots,c_k)>0,
$$
such that for all $x\in\Omega$ one has $r_{Q,k+1,\alpha}(x)\geq C$.
\end{Theorem}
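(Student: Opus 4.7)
The plan is to prove the lower bound $r_{Q,k+1,\alpha}(x)\geq C$ for $x\in\Omega$ by starting from a $\mathsf{C}^{1,\alpha}$-harmonic radius lower bound valid under the weakest hypotheses (bounded Ricci and positive injectivity radius) and then bootstrapping the Euclidean $\mathsf{C}^{l,\alpha}$-regularity of the coordinate components $g_{ij}$ one derivative at a time, using Schauder estimates for the Ricci equation in harmonic coordinates. The $\varepsilon$-collar around $\Omega$ provides the geometric room needed to shrink the coordinate ball at each stage of the iteration without leaving $\Omega_\varepsilon$.

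First I would invoke Anderson's foundational result (see also Hebey, \emph{Nonlinear Analysis on Manifolds: Sobolev Spaces and Inequalities}, Thm.~1.3): from the hypotheses $|\mathrm{Ric}|\leq c_0$ and $r_{\mathrm{inj}}\geq r$ on $\Omega_\varepsilon$, there exists $\rho_1=\rho_1(m,Q,\alpha,r,c_0)>0$ such that for every $x\in\Omega_{\varepsilon/2}$ one has $r_{Q,1,\alpha}(x)\geq\rho_1$. By choosing $\rho_1\leq\varepsilon/2$ from the outset, all resulting coordinate balls will sit inside $\Omega_\varepsilon$. This settles the base case $k=0$ of the theorem and provides a harmonic chart $\phi:\mathrm{B}_{\rho_1}(x)\to\mathbb{R}^m$ in which the $g_{ij}$'s are $\mathsf{C}^{1,\alpha}$-controlled with constants depending only on $m,Q,\alpha,r,c_0$.

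Second, in harmonic coordinates the Laplace operator becomes $\Delta=-g^{ab}\partial_a\partial_b$, and a classical computation yields the quasilinear elliptic system
\begin{equation*}
-\tfrac{1}{2}\,g^{ab}\,\partial_a\partial_b\,g_{ij}+Q_{ij}(g,\partial g)=R_{ij},
\end{equation*}
where $Q_{ij}$ is a universal polynomial of degree $\leq 2$ in $\partial g$ whose coefficients depend smoothly on $g$ and $g^{-1}$. Because of Remark~\ref{rem_inverse} this system is uniformly elliptic on $\mathrm{B}_{\rho_1}(x)$, with ellipticity constants controlled by $Q$. The bootstrap step is then the following: assuming inductively that for some $1\leq l\leq k$ we have a Euclidean $\mathsf{C}^{l,\alpha}$-bound on the $g_{ij}$'s on $\mathrm{B}_{\rho_l}(x)$, the leading coefficients $g^{ab}$ are $\mathsf{C}^{l,\alpha}$-controlled; meanwhile, the intrinsic bound $|\nabla^{l-1}\mathrm{Ric}|\leq c_{l-1}$ together with the $\mathsf{C}^{l-1,\alpha}$-control of the Christoffel symbols (which are polynomials in $g^{-1}$ and $\partial g$) translates into a $\mathsf{C}^{l-1,\alpha}$-bound on the coordinate components $R_{ij}$. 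Applying interior Schauder estimates on a slightly smaller ball $\mathrm{B}_{\rho_{l+1}}(x)$ with $\rho_{l+1}<\rho_l$ upgrades the $g_{ij}$'s to $\mathsf{C}^{l+1,\alpha}$-control with constants depending only on $m,Q,\alpha,c_0,\ldots,c_{l-1}$ and the radius shrinkage. Iterating from $l=1$ up to $l=k$ produces a positive radius $\rho_{k+1}=\rho_{k+1}(m,Q,k,\alpha,\varepsilon,r,c_0,\ldots,c_k)$ on which $g_{ij}$ is $\mathsf{C}^{k+1,\alpha}$-controlled in the Euclidean sense; a standard rescaling argument allows one to choose the final radius small enough so that the defining inequalities (\ref{hari1}) and (\ref{hari2}) of $r_{Q,k+1,\alpha}(x)$ are met, yielding the desired lower bound $C$.

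The main obstacle is the bookkeeping in the bootstrap: the intrinsic covariant derivatives $\nabla^j\mathrm{Ric}$ hide Christoffel symbols, so that extracting a Euclidean $\mathsf{C}^{l-1,\alpha}$-bound on the coordinate components $R_{ij}$ at each stage requires the Euclidean $\mathsf{C}^{l,\alpha}$-bound on $g$ that one is trying to produce at the \emph{next} stage. The clean way to disentangle this is to induct on $l$ and at each step to first use the previous level's control of $g$ (and hence of Christoffel symbols) to convert $|\nabla^{l-1}\mathrm{Ric}|\leq c_{l-1}$ into a coordinate-wise H\"older bound on $R_{ij}$, and only then to apply Schauder to gain the next derivative on $g$; the $\varepsilon$-collar ensures that the finite cascade of radius shrinkages stays bounded below by a positive quantity depending on $\varepsilon$ alone.
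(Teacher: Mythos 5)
Your proposal is essentially a sketch of the standard Anderson--Hebey proof of the harmonic radius estimate, whereas the paper does not reprove this result at all: its proof of Theorem \ref{univ} is a one-line citation to Theorem 1.3 of \cite{hebey} (and the references therein), together with the single observation that the extra normalization $\phi(x)=0$ imposed in the paper's definition of a harmonic chart is harmless, since translating the coordinates does not affect the estimates (\ref{hari1}) and (\ref{hari2}). So the two routes are genuinely different in character: the paper buys brevity by outsourcing the analytic work, while you attempt to reconstruct that work. Your outline (base case from Anderson's $\mathsf{C}^{1,\alpha}$ estimate under $|\mathrm{Ric}|\leq c_0$ and $r_{\mathrm{inj}}\geq r$, then Schauder bootstrap on the quasilinear system $g^{ab}\partial_a\partial_b g_{ij}+Q_{ij}(g,\partial g)=-2R_{ij}$ in harmonic coordinates, with the $\varepsilon$-collar absorbing the finitely many radius shrinkages) is the correct architecture and would, if carried out, establish the theorem.

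Two remarks. First, your index bookkeeping in the bootstrap is off by one: an $\mathsf{L}^\infty$ bound on $\nabla^{l-1}\mathrm{Ric}$ together with control of the Christoffel symbols yields only an $\mathsf{L}^\infty$ bound on $\partial^{l-1}R_{ij}$, not a H\"older bound; the $\mathsf{C}^{0,\alpha}$ seminorm of $\partial^{l-1}R_{ij}$ comes from the Lipschitz control furnished by the bound on $\nabla^{l}\mathrm{Ric}$. This is why the hypotheses run up to $j=k$ while the conclusion is $\mathsf{C}^{k+1,\alpha}$ (one gains two derivatives from Schauder but loses one in converting covariant to coordinate regularity); the hypotheses of the theorem do supply exactly what is needed, so this is a repairable slip rather than a gap. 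Second, you do not address the normalization $\phi(x)=0$, which is the only point the paper's proof actually argues; it is trivial, but since it is the one place where the paper's definition departs from \cite{hebey}, a complete proof should mention it.
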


\begin{proof}
Except the additional assumption $\phi(x)= 0$ that we have made for harmonic coordinates, this result can be found in \cite{hebey} and the references therein (cf. Theorem 1.3 therein). However, since translations do not effect the required estimates, this is not a restriction.
\end{proof}

In particular, the latter result implies $r_{Q,j,\alpha}(x)>0$ for all $x\in M$, a fact which a priori is not clear at all. One calls the number
$$
r_{Q,j,\alpha}(M):=\inf_{x\in M}r_{Q,j,\alpha}(x)
$$
the \emph{$\mathsf{C}^{k,\alpha}$-harmonic radius for the accuracy $Q$.}

\section{Gluing Riemannian manifolds}\label{gluing}

Suppose we are given two Riemannian manifolds $(M_{1},g_{1})$ and
$(M_{2},g_{2})$ with compact diffeomorphic boundaries and let $f:\partial
M_{1}\rightarrow\partial M_{2}$ be a fixed diffeomorphism\footnote{different
choices of $f$ could produce non-diffeomorphic gluings as the exotic twisted
spheres show.}. The \emph{Riemannian gluing} $\mathcal{M}=M_{1}\cup_{f}M_{2}$
of $M_{1}$ and $M_{2}$ along $f$ is the Riemannian manifold $(\mathcal{M},g)$
defined as follows. \smallskip

As a topological manifold, $\mathcal{M}$ is the quotient space obtained from
the disjoint union $M_{1}\sqcup M_{2}$ under the identification $x\sim f(x)$,
for every $x\in\partial M_{1}$. It turns out that the natural inclusions
$\mathrm{i}_{j} : M_{j} \hookrightarrow\mathcal{M}$, $j=1,2$, are continuous embeddings.

Next, having fixed arbitrarily small collar neighborhoods $\alpha_{j}:\partial
M_{j}\times\lbrack0,2)\rightarrow M_{j}$ of $\partial M_{j}$, $j=1,2$, we
consider the homeomorphism $\alpha:\partial M_{1}\times(-2,2)\rightarrow
\mathcal{M}$ onto a neighborhood $\mathcal{V}$ of $\mathrm{i}_{1}(\partial
M_{1})=\mathrm{i}_{2}(\partial M_{2})$ defined as follows:
\[
\alpha\left(  x,t\right)  =\left\{
\begin{array}
[c]{ll}
\mathrm{i}_{1}\circ\alpha_{1}\left(  x,-t\right)   & t\leq0\\
\mathrm{i}_{2}\circ\alpha_{2}\left(  f\left(  x\right)  ,t\right)   & t\geq0.
\end{array}
\right.
\]
The original differentiable structures on $M_{1}$ and $M_{2}$ are then
extended to a (unique up to diffeomorphisms) differentiable structure on
$\mathcal{M}$ by requiring that the natural inclusions $\mathrm{i}_{j}$,
$j=1,2$, are smooth embeddings and by pretending that $\alpha$ is a smooth
diffeomorphism. See e.g. Chapter 8 of \cite{Hirsch} and Chapter 5 of
\cite{Munkres}.

Finally, let $\mathcal{W}=\alpha(\partial M_{1}\times(-1,1))$, and fix any
Riemannian metric $g_{3}$ on $\mathcal{W}$. For instance, we can pull-back
on $\mathcal{W}$ via $\alpha^{-1}$ a product metric $h+dt\otimes dt$ on $\partial
M_{1}\times(-1,1)$. Let $\xi_{1},\xi_{2}$ and $\xi$ denote a partition of
unity subordinated to the open covering $\mathrm{i}_{1}(M_{1}\backslash
\partial M_{1})$, $\mathrm{i}_{2}(M_{2}\backslash\partial M_{2})$, and $\mathcal{W}$ of
$\mathcal{M}$. A Riemannian metric on $\mathcal{M}$ is defined by setting
\[
g=\xi_{1}\cdot(\mathrm{i}^{-1}_{1})^{\ast}g_{1}+\xi_{2}\cdot(\mathrm{i}^{-1}_{2})^{\ast
}g_{2}+\xi\cdot g_{3}.
\]
Note that, outside the compact neighborhood $\overline{\mathcal{W}}$ of
$\mathrm{i}_{1}(\partial M_{1})=\mathrm{i}_{2}(\partial M_{2})$, $\mathcal{M}$
is isometric to the original open manifolds $M_{j}\backslash\alpha_{j}\left(
\partial M_{j}\times\lbrack0,1]\right)  $. In particular, different choices of
$g_{3}$ will leave the corresponding Riemannian structure of $\mathcal{M}$ in
the same bilipschitz class. Moreover, if $\Omega$ is a domain compactly
contained, e.g., in $M_{1}\setminus\partial M_{1}$, then the collar
neighborhood $\alpha_{1}(\partial M_{1}\times\lbrack0,2))$ of $\partial M_{1}$
can be chosen so to have empty intersection with $\Omega$. Therefore, the
neighborhood $\mathcal{V}\supset\overline{\mathcal{W}}$ of $\mathrm{i}
_{1}(M_{1})=\mathrm{i}_{2}(M_{2})$ does not intersect $\mathrm{i}_{1}(\Omega
)$. Whence, it follows that $\Omega$ can be identified with its isometric copy
$\mathrm{i}_{1}(\Omega)$ into the glued space. \smallskip

Now, if we specialize this construction to the case where $M_{1}=M_{2}$ and
$h=\mathrm{id}$ we obtain \textquotedblleft the\textquotedblright
\ \emph{Riemannian double} $\mathcal{M}=\mathcal{D}\left(  M_{1}\right)  $ of
$M_{1}$. On the other hand, if $M_{1}$ and $M_{2}$ are obtained by delating a
disk from the manifolds without boundaries $\overline{M_{1}}$ and
$\overline{M_{2}}$ then we get the (rough and un-oriented) \emph{Riemannian
connected sum }$\overline{M_{1}}\#\overline{M_{2}}$.

\begin{Acknowledgments}
The authors are indebted to Baptiste Devyver for pointing out the multiplicative inequality contained in Theorem 4.1 of \cite{CD}. Furthermore, the auhors would like to thank Xiang-Dong Li and Feng-Yu Wang for a very helpful correspondence on the literature concerning Riesz-transforms.  
\end{Acknowledgments}

\end{document}